\documentclass[
framed_theorems,
framed_proofs,
print,
tcolorbox_frames,
]{OCPreprint}

\usepackage{enumitem}
\setlist[enumerate,1]{label=\textup{(\roman*)}}

\let\cite\parencite

\newcommand\sd{\textup{d}}
\renewcommand\ds{\sd_{\textup{s}}}

\renewcommand\graph{\operatorname{gph}}

\newcommand\BPfs{\textup{BP($f$, s)}}
\newcommand\BPfw{\textup{BP($f$, w)}}
\newcommand\BPfcs{\textup{BP($f\conjugate$, s)}}
\newcommand\BPfcw{\textup{BP($f\conjugate$, w$\star$)}}
\newcommand\RSfs{\textup{RS($f$, s)}}
\newcommand\RSfw{\textup{RS($f$, w)}}
\newcommand\RSfcs{\textup{RS($f\conjugate$, s)}}
\newcommand\RSfcw{\textup{RS($f\conjugate$, w$\star$)}}

\newcommand\BPpf{\textup{BP($\partial f$)}}
\newcommand\RSpf{\textup{RS($\partial f$)}}
\newcommand\NC{\textup{NC}}

\begin{document}
\title[Weak-$\star$ second subderivatives via $\Gamma$-convergence]%
{Simplifying the computation of weak-\texorpdfstring{\boldmath$\star$}{*} second subderivatives of convex functionals via \texorpdfstring{$\Gamma$}{Gamma}-convergence}
\author[Wachsmuth]{%
	Gerd Wachsmuth%
	\footnote{%
		Brandenburgische Technische Universität Cottbus--Senftenberg,
		Institute of Mathematics,
		03046 Cott\-bus,
		Germany,
		\email{gerd.wachsmuth@b-tu.de},
		\url{https://www.b-tu.de/fg-optimale-steuerung},
		ORCID: \href{https://orcid.org/0000-0002-3098-1503}{0000-0002-3098-1503}%
	}
}

\maketitle

\begin{abstract}
	We prove (under some mild assumptions)
	that
	the strong-strong twice epidifferentiability of a convex functional
	is equivalent to
	the weak-$\star$ twice epidifferentiability of its conjugate functional.
	Further,
	the corresponding subderivatives are (up to scaling)
	conjugates of each other.
	This result can be used to facilitate
	the computation of weak-$\star$ second subderivatives,
	which are crucial ingredients
	in no-gap second-order optimality conditions.
	We also show that
	weak-$\star$ second subderivatives are invariant
	under suitable changes of the underlying
	function space setting.
\end{abstract}

\begin{keywords}
	weak-$\star$ second subderivatives,
	second-order optimality conditions,
	$\Gamma$-convergence
\end{keywords}

\begin{msc}
	\mscLink{49K27},
	\mscLink{49J52},
	\mscLink{46G05}
\end{msc}

\section{Introduction}
\label{sec:intro}
We are interested in
quadratic growth conditions and
second-order optimality conditions for optimization problems of type
\begin{equation}
	\label{eq:problem}
	\text{Minimize} \quad F(x) + G(x)
	\quad\text{w.r.t.}
	\quad x \in X.
\end{equation}
Here,
the function $F \colon \dom G \to \R$ satisfies certain smoothness properties
while the function $G \colon X \to (-\infty,\infty]$ is nonsmooth.
The space $X$ is assumed to be the (topological) dual space of
a (typically separable) Banach space $Y$.
In recent years,
it has been demonstrated that the
(necessary and sufficient)
second-order conditions of \eqref{eq:problem}
can be characterized by using the so-called
\emph{weak-$\star$ second subderivative of $G$},
see, e.g.,
\cite{ChristofWachsmuth2017:1,WachsmuthWachsmuth2021,BorchardWachsmuth2023,WachsmuthWalter2024,BorchardMeyerWachsmuth2026:1}.
However,
the application of this theory was not so easy
since the computation of this weak-$\star$ second subderivative
was rather delicate,
see, e.g., the computations in
\cite[Section~6.2]{ChristofWachsmuth2017:1},
\cite[Sections~3, 4]{WachsmuthWachsmuth2021},
\cite[Section~6]{WachsmuthWalter2024}
and
\cite[Section~4]{BorchardMeyerWachsmuth2026:1}.
From a historical perspective,
we mention that,
in finite-dimensional spaces,
second subderivatives were introduced by \cite{Rockafellar1985}.
Later, they were studied
(under various names) in reflexive spaces and Hilbert spaces
by, e.g., 
\cite{Do1992,Noll1995,Levy1999}.

In this work,
we consider the important situation that the function $G$
is the convex conjugate of
a proper, convex and lower semicontinuous function
$J \colon Y \to (-\infty,\infty]$.
Note that this is precisely the case if $G$ is proper, convex
and lower semicontinuous w.r.t.\ the weak-$\star$ topology of $X = Y\dualspace$,
see \cite[Theorem~2.113]{BonnansShapiro2000}.
We focus on the case that $Y$ is separable but not reflexive,
since this case appears frequently in the above works
on the weak-$\star$ second subderivative.
Our main result
\cref{thm:main_result}
shows that (up to scaling)
the weak-$\star$ second subderivative of $G$
equals the convex conjugate
of
the strong second subderivative of $J$
under certain assumptions.
This result can be used to considerably shorten the computations
in the above references,
see the discussion following \cref{thm:main_result}.

A crucial ingredient for the equivalence in \cref{thm:main_result}
is the inequality \eqref{eq:growth_of_G_without_F}.
In \cref{cor:NDC_weak_cts},
we prove that this inequality already follows
from a quadratic growth condition
if $h \mapsto F''(\bar x) [h,h]$
is sequentially weak-$\star$ continuous.
Here, the bilinear form $F''(\bar x)$
serves as a replacement for a second-order derivative
of $F$, see \cref{asm:standing_subderivatives}.
This continuity assumption on $F''(\bar x)$
holds in many applications and, consequently,
\cref{thm:main_result} is broadly applicable.

Finally,
in \cref{sec:all_are_equal} we prove that the
weak-$\star$ second subderivative
is invariant under changes of the underlying space
as long as a strengthening of the condition \eqref{eq:growth_of_G_without_F}
is satisfied.
We refer to \cref{thm:equivalent_subderivatives} for the precise
formulation of this result.

Let us put our work into perspective.

In the case of reflexive spaces $X$ and $Y$,
it has been noted in \cite[Theorem~2.5]{Do1992}
that
a proper, convex and lower semicontinuous function is strongly twice epidifferentiable
(see \cref{def:subderivative} below)
if and only if 
its conjugate function is strongly twice epidifferentiable.
The key to this result is Mosco's theorem,
see \cite{Mosco1971},
which shows that the
so-called Mosco epiconvergence of a sequence $\seq{f_k} \subset \Gamma_0(Y)$
is equivalent to the Mosco epiconvergence of the conjugates $\seq{f_k\conjugate} \subset \Gamma_0(X)$.
Similarly, owing to 
Attouch's theorem,
see \cite[Theorem~3.66]{Attouch1984},
\cite[Theorem~3.9]{Do1992} proves
that the strong twice epidifferentiability of $G$
is equivalent to the subdifferential $\partial G$
being proto-differentiable (and the derivative being a maximal monotone operator).

However, both of these results fail if the underlying space is not reflexive,
see \cite{BeerBorwein1990,AttouchBeer1993}.

This issue has been addressed in the recent contribution
\cite{CorreaPerezArosSantander2025}.
In this work, the authors have shown that
the $\Gamma$-convergence of a sequence $\seq{f_k} \subset \Gamma_0(Y)$
(w.r.t.\ strong convergence)
is equivalent to
the $\Gamma$-convergence of the conjugate sequence $\seq{f_k\conjugate} \subset \Gamma_0(X)$
(w.r.t.\ weak-$\star$ convergence)
under some additional, mild assumptions.
The goal of the current paper is the application
of the results of \cite{CorreaPerezArosSantander2025}
in order to simplify the computation of
weak-$\star$ second subderivatives.

The outline of the paper is as follows.
In \cref{sec:notation_and_prelim},
we fix the notation and collect some preliminaries.
In \cref{sec:epi,sec:subdifferentials},
we review (special cases of) the results of
\cite{CorreaPerezArosSantander2025}.
We state the results in a form such that they are applicable
to our application to weak-$\star$ second subderivatives.
For convenience of the reader, we also give simple proofs.
If the reader is already familiar with these results,
\cref{sec:epi,sec:subdifferentials} can be skipped.
In \cref{sec:SSC} we mainly review the theory of
weak-$\star$ second subderivatives and
the associated second-order optimality conditions.
We also state a novel result (\cref{thm:characterization_of_NDC})
together with two corollaries
in which we give new characterizations of the
so-called non-degeneracy condition.
The results stated in
\cref{sec:epi,sec:subdifferentials}
are applied to the theory of second subderivatives in \cref{sec:subderivatives}.
In particular, we prove that a functional is
weak-$\star$ twice epidifferentiable (see \cref{def:subderivative})
if and only if
its preconjugate functional is
strongly-strongly twice epidifferentiable (see \cref{def:strong-subderivative})
as long as the additional assumption \eqref{eq:growth_of_G_without_F}
is satisfied.
Finally,
we investigate the invariance of the weak-$\star$ second subderivative
w.r.t.\ the underlying space in \cref{sec:all_are_equal}.

\section{Notation and preliminaries}
\label{sec:notation_and_prelim}

\subsection{Notation}
\label{sec:notation}

Throughout this work, $Y$ denotes a (real) Banach space
and $X := Y\dualspace$ is the associated (topological) dual space.
By $\Gamma_0(Y)$ we denote the set of proper, convex and lower semicontinuous functions.
The closed ball in $X$ with center $\bar x \in X$ and radius $r \in (0,\infty]$
is denoted by $B^X_r(\bar x)$.

A fundamental concept in variational analysis is the so-called $\Gamma$-convergence.
Let $f_k, f \colon Y \to [-\infty,\infty]$, $k \in \N$,
be given functions.
We say that the sequence $\seq{f_k}$ $\Gamma$-converges towards $f$
if
\begin{itemize}
	\item
		for all sequences $\seq{y_k} \subset Y$ and $y \in Y$ with $y_k \to y$ in $Y$,
		we have $f(y) \le \liminf_{k \to \infty} f_k(y_k)$
		and
	\item
		for all $y \in Y$ there exists a sequence $\seq{y_k} \subset Y$ satisfying $y_k \to y$ in $Y$
		and
		$\limsup_{k \to \infty} f_k(y_k) \le f(y)$.
\end{itemize}
Formally,
we will refer to the first property as the ``bound property'',
whereas the second one is the ``existence of a recovery sequence''.
Similarly, we can define $\Gamma$-convergence w.r.t.\ the weak topology of $Y$
and $\Gamma$-convergence of the conjugate functions $f_k\conjugate$ defined on the dual space $X = Y\dualspace$
w.r.t.\ strong or weak-$\star$ convergence.

For later reference, we abbreviate the various bound properties
appearing in these modes of convergence by
\begin{alignat*}{3}
	\tag{\BPfs}
	\label{BPfs}
	& \forall \seq{y_k} \subset Y : &&\forall y \in Y :
	&y_k \to y \; &\Rightarrow \; f(y) \le \liminf_{k \to \infty} f_k(y_k)
	,\\
	\tag{\BPfw}
	\label{BPfw}
	& \forall \seq{y_k} \subset Y : &&\forall y \in Y :
	&y_k \weakly y \; &\Rightarrow \; f(y) \le \liminf_{k \to \infty} f_k(y_k)
	,\\
	\tag{\BPfcs}
	\label{BPfcs}
	& \forall \seq{x_k} \subset X : {}&&\forall x \in X : {}
	&x_k \to x \; &\Rightarrow \; f\conjugate(x) \le \liminf_{k \to \infty} f_k\conjugate(x_k)
	,\\
	\tag{\BPfcw}
	\label{BPfcw}
	& \forall \seq{x_k} \subset X : &&\forall x \in X :
	&x_k \weaklystar x \; &\Rightarrow \; f\conjugate(x) \le \liminf_{k \to \infty} f_k\conjugate(x_k)
	.
\end{alignat*}

Similarly, the existence of a recovery sequence is abbreviated as
\begin{align*}
	\tag{\RSfs}
	\label{RSfs}
	&
	\forall y \in Y : \exists \seq{y_k} \subset Y :
	&&
	y_k \to y \land \limsup_{k \to \infty} f_k(y_k) \le f(y)
	,\\
	\tag{\RSfw}
	\label{RSfw}
	&
	\forall y \in Y : \exists \seq{y_k} \subset Y :
	&&
	y_k \weakly y \land \limsup_{k \to \infty} f_k(y_k) \le f(y)
	,\\
	\tag{\RSfcs}
	\label{RSfcs}
	&
	\forall x \in X : \exists \seq{x_k} \subset X :
	&&
	x_k \to x \land \limsup_{k \to \infty} f_k\conjugate(x_k) \le f\conjugate(x)
	,\\
	\tag{\RSfcw}
	\label{RSfcw}
	&
	\forall x \in X : \exists \seq{x_k} \subset X :
	&&
	x_k \weaklystar x \land \limsup_{k \to \infty} f_k\conjugate(x_k) \le f\conjugate(x)
	.
\end{align*}

\subsection{Preliminaries}
\label{sec:prelim}

We collect some simple, auxiliary results.
The first lemma shows that the bound property can also be invoked on subsequences.
\begin{lemma}
	\label{lem:aux_lower_bound}
	Let $f_k, f \colon Y \to [-\infty,\infty]$,
	$k \in \N$,
	be given
	satisfying \eqref{BPfs}.
	Suppose that a sequence $\seq{y_k}$ is given
	such that $y_{k_n} \to y$ in $Y$ along a subsequence.
	Then,
	$f(y) \le \liminf_{n \to \infty} f_{k_n}(y_{k_n})$.

	The same assertion holds for weak convergence
	and for weak-$\star$ in the dual space $X$.
\end{lemma}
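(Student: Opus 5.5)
The plan is to reduce the subsequence statement to \eqref{BPfs} by padding the subsequence into a full sequence that converges to $y$. The subtlety is that \eqref{BPfs} is only available for sequences indexed by all $k$, whereas here we only know convergence along $k_n$; the other entries $y_k$ may behave arbitrarily. So I construct an auxiliary sequence $\seq{\tilde y_k}$, defined for every $k \in \N$, which agrees with $y_{k_n}$ at the indices $k = k_n$ and equals $y$ at all remaining indices.

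Concretely, set $\tilde y_k := y_{k_n}$ if $k = k_n$ for some $n$, and $\tilde y_k := y$ otherwise. This is well defined because $n \mapsto k_n$ is strictly increasing, hence injective. Then $\tilde y_k \to y$ in $Y$. Indeed, given $\varepsilon > 0$, choose $N$ with $\norm{y_{k_n} - y} \le \varepsilon$ for all $n \ge N$. For every $k \ge k_N$, either $\tilde y_k = y$, or $k = k_n$ with $k_n \ge k_N$, so $n \ge N$ and $\norm{\tilde y_k - y} \le \varepsilon$.

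By \eqref{BPfs} we obtain $f(y) \le \liminf_{k\to\infty} f_k(\tilde y_k)$. The liminf of a sequence is at most the liminf along any subsequence, so
\[
	f(y) \le \liminf_{k \to \infty} f_k(\tilde y_k) \le \liminf_{n \to \infty} f_{k_n}(\tilde y_{k_n}) = \liminf_{n \to \infty} f_{k_n}(y_{k_n}).
\]
This holds in $[-\infty,\infty]$ without any finiteness assumption.

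For the weak and weak-$\star$ variants I use the same construction, with \eqref{BPfw} respectively \eqref{BPfcw} in place of \eqref{BPfs}; the weak-$\star$ case is applied to $f_k\conjugate$ on $X$. The only step that changes is the convergence of the padded sequence. For weak convergence I fix $\eta \in Y\dualspace$ and apply the scalar argument above to $\dual{\eta}{\tilde y_k}$, which equals $\dual{\eta}{y}$ off the subsequence. For weak-$\star$ convergence I do the same with $\dual{\tilde x_k}{v}$ for each fixed $v \in Y$.

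The only points needing care are the well-definedness of the padding and the fact that weak and weak-$\star$ convergence are tested functional by functional, where the argument is purely scalar. I do not expect a genuine obstacle.
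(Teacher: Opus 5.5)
Your proposal is correct and follows essentially the same route as the paper: it pads the indices outside the subsequence with $y$ (or $x$), applies the bound property to the resulting convergent full sequence, and then uses that the $\liminf$ along a subsequence is at least the full $\liminf$. Your explicit checks of well-definedness and of convergence of the padded sequence, tested functional by functional in the weak and weak-$\star$ cases, spell out what the paper leaves implicit.
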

\begin{proof}
	We modify the sequence as follows.
	If $k \in \N$ does not appear in the subsequence,
	i.e., $k \ne k_n$ for all $n \in \N$,
	we set $y_k := y$.
	Note that this ensures $y_k \to y$ for the entire sequence.
	Consequently,
	\begin{equation*}
		f(y)
		\le
		\liminf_{k \to \infty} f_k(y_k)
		\le
		\liminf_{n \to \infty} f_{k_n}(y_{k_n})
	\end{equation*}
	and this finishes the proof.

	Clearly, the same argument can be used for the conjugate functions
	and for weak-$\star$ convergence.
\end{proof}

We utilize the following result from
\cite[Lemma~1.15]{Attouch1984},
see also \cite[Lemma~7.1]{CorreaPerezArosSantander2025},
which greatly simplifies the selection of diagonal sequences.
\begin{lemma}
	\label{lem:technical_lemma_from_attouch}
	Let $\seq{\alpha_{l,k}}_{l,k \in \N} \subset [-\infty,\infty]$
	be given.
	Then, there is a sequence $\seq{l_k}_{k \in \N} \subset \N$
	with $l_k \to \infty$ and
	\begin{equation*}
		\limsup_{k \to \infty} \alpha_{l_k, k}
		\le
		\limsup_{l \to \infty} \limsup_{k \to \infty} \alpha_{l,k}
		.
	\end{equation*}
\end{lemma}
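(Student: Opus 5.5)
Write $L := \limsup_{l \to \infty} \limsup_{k \to \infty} \alpha_{l,k} \in [-\infty,\infty]$ and $\beta_l := \limsup_{k \to \infty} \alpha_{l,k}$, so that $L = \limsup_{l \to \infty} \beta_l$. The plan is to build a diagonal sequence in which, for each $k$, we use the largest admissible index $l$ whose $\alpha_{l,\cdot}$-values are already reasonably close to (or below) $\beta_l$ at time $k$.

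Fix thresholds $c_l \in \R$ with $c_l > \beta_l$ for all $l$ and $\limsup_{l\to\infty} c_l = L$ (if $L < \infty$); one explicit choice is $c_l := \max(\beta_l, -l) + 1/l$ when $\beta_l > -\infty$, and $c_l := -l$ when $\beta_l = -\infty$. If $L = +\infty$ there is nothing to prove except $l_k \to \infty$, and we simply take $l_k := k$.

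Since $\limsup_k \alpha_{l,k} = \beta_l < c_l$, for every $l$ there is $K_l \in \N$ such that $\alpha_{l,k} \le c_l$ for all $k \ge K_l$. We may enlarge the $K_l$ so that $K_1 < K_2 < \dots$ is strictly increasing. Then define
\begin{equation*}
	l_k := \max\{ l \in \N : K_l \le k \}
\end{equation*}
for $k \ge K_1$, and $l_k := 1$ for $k < K_1$. Because $(K_l)$ is strictly increasing, $l_k$ is finite and nondecreasing. It tends to $\infty$, since $k \ge K_m$ implies $l_k \ge m$. For $k \ge K_1$ we have $k \ge K_{l_k}$, hence $\alpha_{l_k,k} \le c_{l_k}$. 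Consequently
\begin{equation*}
	\limsup_{k\to\infty} \alpha_{l_k,k} \le \limsup_{k\to\infty} c_{l_k} \le \limsup_{l\to\infty} c_l,
\end{equation*}
where the last inequality uses $l_k \to \infty$.

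The remaining step, and the only real bookkeeping obstacle, is to check that $\limsup_l c_l = L$ with the above choice, covering the cases $\beta_l = \pm\infty$. Values $\beta_l = +\infty$ cannot occur for infinitely many $l$ when $L < \infty$. For the finitely many such $l$, no $K_l$ exists, so we set $c_l := +\infty$ and $K_l := K_{l-1}+1$, which is harmless since only the tail matters. When $L$ is finite, $c_l \le \max(\beta_l,-l) + 1/l$ gives $\limsup_l c_l \le L$. When $L = -\infty$, eventually $\beta_l \le -l$, say, so that $c_l \le -l + 1 \to -\infty$; if this fails, replace $-l$ by $\max(\beta_l, -l)$, which still tends to $-\infty$. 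This yields the claimed inequality.
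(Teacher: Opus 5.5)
Your proof is correct and uses essentially the same diagonal construction as the paper: strictly increasing $K_l$ beyond which $\alpha_{l,k}$ stays below a threshold slightly above $\limsup_{k} \alpha_{l,k}$, and $l_k$ chosen as the largest $l$ with $K_l \le k$. The only difference is how infinite values are handled. The paper passes to $\exp(\alpha_{l,k}) \ge 0$, whereas your thresholds $\max(\beta_l,-l)+1/l$ handle them directly. Since $\limsup_l \max(\beta_l,-l) = L$ also when $L = -\infty$, your separate treatment of that case is unnecessary, including the remark that eventually $\beta_l \le -l$, which need not hold.
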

For convenience, we provide a short proof
inspired by the proof of
\cite[Lemma~2.12]{ChristofWachsmuth2017:3}.
\begin{proof}
	We note that for any sequence $\seq{\beta_k} \subset [-\infty,\infty]$,
	we have
	\begin{equation*}
		\exp\parens*{
			\limsup_{k \to \infty} \beta_k
		}
		=
		\limsup_{k \to \infty} \braces*{ \exp(\beta_k) }
		.
	\end{equation*}
	Consequently,
	by replacing $\alpha_{l,k}$ with $\exp(\alpha_{l,k})$,
	it is sufficient to consider the case that $\alpha_{l,k} \ge 0$
	for all $l,k \in \N$.

	We define
	$\alpha_l := \limsup_{k \to \infty} \alpha_{l,k}$
	and
	$\alpha := \limsup_{l \to \infty} \alpha_l$.
	It is sufficient to consider the case $\alpha < \infty$.
	Thus, there exists $l_0 \in \N$
	such that $\alpha_l < \infty$ for all $l \ge l_0$.
	We define $K_{l_0} := 1$
	and for all $l > l_0$ we choose $K_l > K_{l-1}$
	such that
	\begin{equation}
		\label{eq:in_technical_proof}
		\alpha_{l,k}
		\le
		\alpha_l + \frac{1}{l}
		\qquad
		\forall k \ge K_l
		.
	\end{equation}
	Next, we define $l_k := \sup\set{l \ge l_0 \given k \ge K_l}$.
	Then, for all $k \in \N$ we have
	$l_k \in \N$
	and $l_k \to \infty$
	since $\seq{K_l}$ is strictly increasing,
	and
	$k \ge K_{l_k}$ by definition.
	Consequently,
	for $k$ large enough, we have $l_k > l_0$ and
	\eqref{eq:in_technical_proof} implies
	\begin{equation*}
		\alpha_{l_k,k} \le \alpha_{l_k} + \frac{1}{l_k}
		.
	\end{equation*}
	Hence,
	\begin{align*}
		\limsup_{k \to \infty} \alpha_{l_k,k}
		&\le
		\limsup_{k \to \infty}
		\braces*{
			\alpha_{l_k} + \frac{1}{l_k}
		}
		=
		\limsup_{k \to \infty} \alpha_{l_k}
		\\&
		\le
		\limsup_{l \to \infty} \alpha_l
		=
		\limsup_{l \to \infty} \limsup_{k\to\infty} \alpha_{l,k}
		.
	\end{align*}
	This finishes the proof.
\end{proof}

Finally, we recall the celebrated Rockafellar integration formula
which allows us to reconstruct a convex function from its subdifferential,
see \cite{Rockafellar1966,Rockafellar1970:2}.
For an elementary proof (based on Ekeland's variational principle),
we refer to \cite{IvanovZlateva2008}.
\begin{theorem}
	\label{thm:rockafellar_integration}
	Let $f \in \Gamma_0(Y)$ be given.
	Then,
	\begin{equation*}
		f(y)
		=
		f(y_0)
		+
		\sup\set*{
			\sum_{i = 0}^{n} \dual{x_i}{y_{i+1} - y_i}_Y
			\given
			\begin{aligned}
				& n \ge 1, y_{n+1} := y,
				\\
				&
				\forall i = 1,\ldots,n :
				(y_i,x_i) \in \graph\partial f
			\end{aligned}
		}
	\end{equation*}
	holds
	for all $(y_0, x_0) \in \graph \partial f$
	and
	all $y \in Y$.
\end{theorem}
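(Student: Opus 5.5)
The statement is the Rockafellar integration formula, and I will prove it as two inequalities. The inequality ``$\ge$'' comes from the subgradient inequality along a chain. The inequality ``$\le$'' is the real content, and I plan to get it through Ekeland's variational principle, in the spirit of \cite{IvanovZlateva2008}. Throughout I write $S(y)$ for the supremum on the right-hand side. I will use the convention that $y_{n+1} := y$ and that the term $i = 0$ involves the fixed pair $(y_0, x_0)$.

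\emph{Step 1: $f(y_0) + S(y) \le f(y)$.} Take an admissible chain $(y_i, x_i) \in \graph \partial f$ for $i = 0, \ldots, n$. The subgradient inequality gives $\dual{x_i}{y_{i+1} - y_i}_Y \le f(y_{i+1}) - f(y_i)$ for every $i = 0,\ldots,n$. Summing telescopes to $f(y) - f(y_0)$. If $f(y) = \infty$ there is nothing to prove. Otherwise every $y_i$ lies in $\dom \partial f \subset \dom f$, so all quantities are finite and the bound holds.

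\emph{Step 2: the reverse inequality.} I first record that $S$ has the structure of a ``chain supremum''. Appending one more pair gives
\[
S(y) \ge S(z) + \dual{\xi}{y - z}_Y \qquad \text{for all } (z,\xi) \in \graph \partial f .
\]
The function $g := f(y_0) + S$ is a supremum of continuous affine functions, so it is convex and lower semicontinuous. By Step 1 it satisfies $g \le f$, and $g(y_0) \ge f(y_0)$ (use the trivial chain $y_1 = y_0$, $x_1 = x_0$). Now suppose $g(\bar y) < f(\bar y)$ for some $\bar y$. I separate $f$ from $g$ locally: choose $\lambda$ and $\varepsilon$ such that $f - g$ dominates a small affine perturbation. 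More concretely, I apply Ekeland's principle (or the Brøndsted--Rockafellar theorem) to the lsc function $f - \langle \cdot \rangle$. This produces a point $z$ near $\bar y$ with a subgradient $\xi \in \partial f(z)$ of a controlled form, and $f(z)$ close to the value dictated by the affine minorant. Inserting $(z, \xi)$ into the chain inequality above then contradicts $g(\bar y) < f(\bar y)$.

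A cleaner route would be to argue that $g + c$ and $f$ have the same subdifferential graph and then invoke uniqueness. That route, however, already needs the integration theorem, so I will instead carry out the Ekeland/Brøndsted--Rockafellar argument along the segment from $y_0$ to $\bar y$. Split the segment into many small pieces. On each piece, Brøndsted--Rockafellar supplies pairs $(y_i, x_i) \in \graph\partial f$ with $y_i$ near the grid points and $\dual{x_i}{y_{i+1}-y_i}_Y$ close to $f(y_{i+1}) - f(y_i)$. The errors are controlled by $\varepsilon$ times the step lengths. Letting the mesh tend to $0$ yields $S(\bar y) \ge f(\bar y) - f(y_0)$.

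\emph{Main obstacle.} The hard part is this error control in Step 2. The approximate subgradients given by Brøndsted--Rockafellar sit at perturbed points, and the inner products $\dual{x_i}{\cdot}_Y$ must be bounded even though $\norm{x_i}$ may blow up near the boundary of $\dom f$. I would first handle the case $\bar y \in \dom f$ and use lower semicontinuity of $f$ along the segment. If that fails, I will simply invoke the elementary proof of \cite{IvanovZlateva2008} directly, since the statement coincides with their result.
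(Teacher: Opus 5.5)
Your Step~1 is correct, and so are the facts you record about $g := f(y_0) + S$: it is convex and lower semicontinuous, $g \le f$, $g(y_0) = f(y_0)$, and appending a pair gives $g(\bar y) \ge g(z) + \dual{\xi}{\bar y - z}_Y$ for all $(z,\xi) \in \graph\partial f$. However, neither sketch in Step~2 establishes the inequality $f \le g$, which is the entire content of the theorem.

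In the Ekeland/contradiction route, Ekeland's principle or the Br{\o}ndsted--Rockafellar theorem produces pairs $(z,\xi) \in \graph\partial f$ together with information about $f(z)$. Indeed, they give $f(\bar y) = \sup\{ f(z) + \dual{\xi}{\bar y - z}_Y : (z,\xi) \in \graph\partial f \}$. But the chain inequality involves $g(z)$, of which you only know $g(z) \le f(z)$. The missing bound $g(z) \ge f(z)$ is the theorem itself at the point $z$. So this route, made precise, only reduces the claim to $\bar y \in \dom\partial f$; as a proof it is circular.

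That remaining case is what your segment discretization must deliver, and the obstacle you name there is the real difficulty, not a technicality. If $\dom f$ has empty interior (the typical situation in infinite dimensions), the grid points $z_i \in [y_0, \bar y]$ need not lie in $\dom\partial f$, which is merely dense in $\dom f$. You therefore have to take $(y_i, x_i) \in \graph\partial f$ from Br{\o}ndsted--Rockafellar, with $y_i = z_i + e_i$, starting from some $x_i^\varepsilon \in \partial_\varepsilon f(z_i)$. The chain sum then differs from $\sum_i \dual{x_i}{z_{i+1} - z_i}_Y$ by $\sum_i \dual{x_i}{e_{i+1} - e_i}_Y$. Br{\o}ndsted--Rockafellar controls $\norm{e_i}_Y \le \varepsilon/\lambda$ and $\norm{x_i - x_i^\varepsilon}_X \le \lambda$, but not $\norm{x_i}_X$. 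At points $z_i \notin \dom\partial f$ the norms of $\varepsilon$-subgradients must blow up as $\varepsilon \to 0$: a bounded family would have a weak-$\star$ cluster point in $\partial f(z_i)$. For example, for $f(t) = -\sqrt{t}$ on $[0,\infty)$ one has $\partial_\varepsilon f(0) = (-\infty, -1/(4\varepsilon)]$. Consequently:
the available bound $(\norm{x_i^\varepsilon}_X + \lambda)\varepsilon/\lambda$ on $\abs{\dual{x_i}{e_i}_Y}$ need not be small;
the cross terms $\dual{x_i}{e_{i+1}}_Y$ pair a subgradient at one point with the perturbation at another and are not estimated at all;
and these errors accumulate over $n \to \infty$ pieces.
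Closing this needs an idea your proposal does not contain, for instance the Ekeland-based argument of \cite{IvanovZlateva2008}.

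For comparison, the paper does not prove the theorem itself; it quotes it from \cite{Rockafellar1966,Rockafellar1970:2} and refers to \cite{IvanovZlateva2008}. Your fallback of citing that proof therefore matches the paper, but then the only part you actually prove is the easy inequality of Step~1.
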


\section{Relations between the bound properties and the existence of recovery sequences}
\label{sec:epi}

In this section, we want to study
links between the convergence properties of a sequence $\seq{f_k}$
to those of the sequence of the conjugates $\seq{f_k\conjugate}$.
We point out that the results of this section follow from those
in \cite[Section~5]{CorreaPerezArosSantander2025}.
However, we believe that our presentation increases
the understanding.
In particular, our proof of \cref{prop:weakstar_dual_recovery}
is simpler than the corresponding proof in
\cite[Section~8.1]{CorreaPerezArosSantander2025}
and follows the lines of the proof of \cref{prop:strong_dual_recovery},
which is basically the result underlying Mosco's theorem
on the continuity of the Young--Fenchel transform
w.r.t.\ Mosco convergence.

Throughout this section, we consider proper, convex and lower semicontinuous functions.
\begin{assumption}[Standing assumption]
	\label{asm:standing_epiconvergence}
	We have
	$\seq{f_k} \subset \Gamma_0(Y)$ and $f \in \Gamma_0(Y)$.
\end{assumption}
Note that this immediately implies
$f_k\conjugate, f\conjugate \in \Gamma_0(X)$.

It is well known that the existence of recovery sequences
implies the lower estimates for the conjugate functions
by using appropriate topologies.
\begin{lemma}\leavevmode
	\label{lem:lower_bounds}
	\begin{enumerate}[label=(\alph*)]
		\item
			\label{lem:lower_bounds:1}
			\eqref{RSfs} implies \eqref{BPfcw}.
		\item
			\label{lem:lower_bounds:2}
			\eqref{RSfw} implies \eqref{BPfcs}.
		\item
			\label{lem:lower_bounds:3}
			\eqref{RSfcs} implies \eqref{BPfw}.
		\item
			\label{lem:lower_bounds:4}
			\eqref{RSfcw} implies \eqref{BPfs}.
	\end{enumerate}
\end{lemma}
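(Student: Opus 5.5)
All four implications follow from one argument: the Fenchel--Young inequality evaluated along a recovery sequence. The only work is pairing the topologies so that the duality pairing converges. I would prove \ref{lem:lower_bounds:1} in detail and obtain the other three by exchanging the roles of $Y$ and $X$ and of the convergence modes.

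For \ref{lem:lower_bounds:1}, fix $\seq{x_k} \subset X$ with $x_k \weaklystar x$, and fix any $y \in Y$ with $f(y) < \infty$. By \eqref{RSfs} there is a sequence $y_k \to y$ in $Y$ with $\limsup_{k} f_k(y_k) \le f(y)$. The definition of the conjugate gives
\begin{equation*}
	f_k\conjugate(x_k) \ge \dual{x_k}{y_k}_Y - f_k(y_k).
\end{equation*}
Weak-$\star$ convergent sequences are bounded by the uniform boundedness principle, since $Y$ is Banach. Hence
\begin{equation*}
	\dual{x_k}{y_k}_Y - \dual{x}{y}_Y = \dual{x_k}{y_k - y}_Y + \dual{x_k - x}{y}_Y \to 0 .
\end{equation*}
Taking the $\liminf$ yields $\liminf_k f_k\conjugate(x_k) \ge \dual{x}{y}_Y - f(y)$. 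Taking the supremum over $y$ then gives $f\conjugate(x) \le \liminf_k f_k\conjugate(x_k)$, which is \eqref{BPfcw}.

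If $f(y)=\infty$, the right-hand side is $-\infty$ and there is nothing to check. One point needs care: $-f_k(y_k)$ could in principle be $+\infty$, but since $f_k$ is proper the values lie in $(-\infty,\infty]$ and the inequality stays valid in the extended sense.

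Parts \ref{lem:lower_bounds:2}--\ref{lem:lower_bounds:4} use the same computation.
\begin{enumerate}[label=(\alph*)]
	\setcounter{enumi}{1}
	\item Here $y_k \weakly y$ is bounded and $x_k \to x$ in norm, so the pairing converges by the analogous splitting.
	\item Recovery sequences $x_k \to x$ are taken for $f\conjugate$, and one uses $f_k(y_k) \ge \dual{x_k}{y_k}_Y - f_k\conjugate(x_k)$. The conclusion needs $f = f\biconjugate$ restricted to $Y$, that is, $f(y) = \sup_x \{\dual{x}{y}_Y - f\conjugate(x)\}$. This holds by Fenchel--Moreau because $f \in \Gamma_0(Y)$, which is where \cref{asm:standing_epiconvergence} enters.
	\item The recovery sequence satisfies $x_k \weaklystar x$ and $y_k \to y$, so the pairing again converges.
\end{enumerate}

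The only genuine obstacle is making sure each mixed pairing converges. This requires boundedness of the weakly or weak-$\star$ convergent factor, which uniform boundedness supplies, and, in the dual-to-primal parts \ref{lem:lower_bounds:3} and \ref{lem:lower_bounds:4}, recovering $f$ from $f\conjugate$ via Fenchel--Moreau on $Y$.
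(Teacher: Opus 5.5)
Your proposal is correct and matches the paper's proof: the Fenchel--Young inequality along the recovery sequence, convergence of the mixed pairing, then a supremum over the free variable, with Fenchel--Moreau (recovering $f$ from $f\conjugate$ on $Y$) needed only in parts (c) and (d). Your appeal to uniform boundedness simply makes explicit what the paper summarizes as ``the convergence properties of the sequences.''
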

For completeness, we provide the easy proof.
\begin{proof}
	We only prove \ref{lem:lower_bounds:1}.
	The other assertions follow by exactly the same argument
	(and $f = f\biconjugate$ for \ref{lem:lower_bounds:3} and \ref{lem:lower_bounds:4}).

	Let a sequence $\seq{x_k} \subset X$ with $x_k \weaklystar x$ in $X$ be given.
	For an arbitrary $y \in Y$
	let $\seq{y_k} \subset Y$ be the associated recovery sequence,
	i.e.,
	$y_k \to y$ in $Y$ and $\limsup_{k \to \infty} f_k(y_k) \le f(y)$.
	From the Fenchel--Young inequality, we infer
	\begin{equation*}
		f_k\conjugate(x_k)
		\ge
		\dual{x_k}{y_k}_Y
		-
		f_k(y_k).
	\end{equation*}
	From the convergence properties of the sequences we get
	$\dual{x_k}{y_k}_Y \to \dual{x}{y}_Y$.
	Consequently,
	\begin{equation*}
		\liminf_{k \to \infty} f_k\conjugate(x_k)
		\ge
		\dual{x}{y}_Y
		-
		\limsup_{k \to \infty} f_k(y_k)
		\ge
		\dual{x}{y}_Y
		-
		f(y)
		.
	\end{equation*}
	Since this inequality holds for all $y \in Y$,
	we can take the supremum w.r.t.\ $y \in Y$
	and get
	\begin{equation*}
		\liminf_{k \to \infty} f_k\conjugate(x_k)
		\ge
		\sup\set*{
			\dual{x}{y}_Y
			-
			f(y)
			\given
			y \in Y
		}
		=
		f\conjugate(x)
		.
	\end{equation*}
	This finishes the proof.
\end{proof}

Interestingly, it is also possible to infer the existence of recovery sequences
under the assumption of the bound property
in certain cases.
We start with the easiest situation by assuming reflexivity.
This result is essentially at the heart of Mosco's theorem.
The proof is inspired by the proof of
\cite[Theorem~5.53]{HuPapageorgiou1997}.

\begin{proposition}
	\label{prop:strong_dual_recovery}
	We assume that the following conditions are satisfied.
	\begin{enumerate}[label=(\arabic*)]
		\item
			The space $Y$ is reflexive.
		\item
			There exists $C \ge 0$ such that
			$f_k \ge - C \parens{ \norm{\cdot}_Y + 1}$
			and
			$f_k\conjugate \ge - C \parens{ \norm{\cdot}_X + 1}$
			hold for all $k \in \N$.
	\end{enumerate}
	Then,
	\eqref{BPfw}
	implies
	\eqref{RSfcs}.
\end{proposition}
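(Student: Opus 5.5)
Fix $x \in X$ with $f\conjugate(x) < \infty$; otherwise the constant sequence $x_k := x$ is a recovery sequence. The plan is to build the recovery sequence from minimizers of a regularized problem, following the classical Mosco argument. For each $k$ consider
\begin{equation*}
	\min_{y \in Y} \; f_k(y) - \dual{x}{y}_Y + \tfrac12 \norm{y}_Y^2 .
\end{equation*}
By the lower bound $f_k \ge -C(\norm{\cdot}_Y+1)$ the objective is coercive. It is convex and lower semicontinuous, and $Y$ is reflexive, so a minimizer $y_k$ exists. By Asplund's renorming we may assume that the norms of $Y$ and $X$ are strictly convex (and $X$ smooth). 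This changes neither the hypotheses (up to $C$) nor the conclusion. Let $j$ be the (single-valued) duality map, i.e.\ $\partial \tfrac12\norm{\cdot}_Y^2 = j$. The optimality condition then gives $x_k := x - j(y_k) \in \partial f_k(y_k)$. Consequently $f_k\conjugate(x_k) = \dual{x_k}{y_k}_Y - f_k(y_k)$, with $\norm{x_k - x}_X = \norm{y_k}_Y$.

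The heart of the proof is to make the perturbation vanish. The recovery sequence should not be $x - j(y_k)$ itself but the version with the scaled regularization $\tfrac{\varepsilon}{2}\norm{\cdot}_Y^2$. In the limit $\varepsilon \to 0$ combined with $k \to \infty$ the diagonal choice is handled by \cref{lem:technical_lemma_from_attouch}. For fixed $\varepsilon$ we want to show:
\begin{enumerate}[label=(\alph*)]
	\item The minimizers $y_k^\varepsilon$ are bounded.
	\item $\limsup_k f_k\conjugate(x_k^\varepsilon) \le (f + \tfrac\varepsilon2\norm{\cdot}^2)\conjugate(x) + $ something controlled.
\end{enumerate}

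Boundedness is the first step. The optimal value is $-(f_k+\tfrac\varepsilon2\norm\cdot^2)\conjugate(x) = -(f_k\conjugate \,\square\, \tfrac1{2\varepsilon}\norm\cdot_X^2)(x)$. The lower bound on $f_k\conjugate$ makes this infimal convolution bounded below, so the values are bounded above. Coercivity from $f_k \ge -C(\norm\cdot+1)$ then bounds $\norm{y_k^\varepsilon}$. For the limsup estimate, extract by reflexivity a weakly convergent subsequence $y_{k_n}^\varepsilon \weakly y^\varepsilon$. Then \eqref{BPfw} together with \cref{lem:aux_lower_bound} and weak lower semicontinuity of the norm yields
\begin{equation*}
	\liminf_n \Bigl[ f_{k_n}(y_{k_n}^\varepsilon) - \dual{x}{y_{k_n}^\varepsilon}_Y + \tfrac\varepsilon2\norm{y_{k_n}^\varepsilon}^2 \Bigr] \ge \inf_y \Bigl[ f(y) - \dual{x}{y}_Y + \tfrac\varepsilon2\norm{y}^2 \Bigr].
\end{equation*}
That is, $\limsup_k (f_k\conjugate \,\square\, \tfrac1{2\varepsilon}\norm\cdot^2)(x) \le (f\conjugate \,\square\, \tfrac1{2\varepsilon}\norm\cdot^2)(x) \le f\conjugate(x)$. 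The subsequence argument applies to every subsequence, which is why the full limsup is obtained.

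Now pick $x_k^\varepsilon$ attaining the inf-convolution, namely $x_k^\varepsilon = x - \varepsilon\, j(y_k^\varepsilon)$. Then
\begin{equation*}
	f_k\conjugate(x_k^\varepsilon) + \tfrac1{2\varepsilon}\norm{x - x_k^\varepsilon}^2 = (f_k\conjugate \,\square\, \tfrac1{2\varepsilon}\norm\cdot^2)(x),
\end{equation*}
so $\limsup_k f_k\conjugate(x_k^\varepsilon) \le f\conjugate(x)$. Moreover $\norm{x_k^\varepsilon - x} = \varepsilon\norm{y_k^\varepsilon}$.

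The main obstacle is to show that $\limsup_k \norm{x_k^\varepsilon - x} \to 0$ as $\varepsilon \to 0$, i.e.\ that $\varepsilon \norm{y_k^\varepsilon}$ is small uniformly in large $k$. I would get this from the same identity. We have $\tfrac1{2\varepsilon}\norm{x-x_k^\varepsilon}^2 \le (\text{inf-conv})(x) - f_k\conjugate(x_k^\varepsilon)$. The first term is asymptotically at most $f\conjugate(x) + o(1)$. The second term is bounded below by $-C(\norm{x_k^\varepsilon}+1) \ge -C(\norm x + \norm{x-x_k^\varepsilon} + 1)$. Setting $t = \norm{x - x_k^\varepsilon}$ gives $t^2/(2\varepsilon) \le A + Ct$ with $A$ independent of $\varepsilon$, hence $t = O(\sqrt{\varepsilon})$ asymptotically in $k$. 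Finally apply \cref{lem:technical_lemma_from_attouch} to $\alpha_{l,k} := \max\{f_k\conjugate(x_k^{1/l}) - f\conjugate(x),\, \norm{x_k^{1/l} - x}\}$ (with the $\exp$ trick handling the signs). This gives $l_k \to \infty$ such that $x_k := x_k^{1/l_k}$ satisfies $x_k \to x$ and $\limsup_k f_k\conjugate(x_k) \le f\conjugate(x)$, which is \eqref{RSfcs}. With the inf-convolution formulation, the renorming step becomes unnecessary: existence of a minimizer $x_k^\varepsilon$ of the dual inf-convolution follows directly by weak-$\star$ compactness in $X$ and coercivity.
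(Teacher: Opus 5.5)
Your argument is correct and is essentially the paper's proof seen from the primal side. The paper minimizes $f_k\conjugate + \frac{l}{2}\norm{\cdot - x}_X^2$ over $X$, which is exactly your infimal convolution with $l = 1/\varepsilon$, and reaches the same primal--dual pair via $y_{l,k} \in \partial f_k\conjugate(x_{l,k}) \subset Y$. From there it proceeds as you do: both lower bounds give boundedness, \eqref{BPfw} is applied along a weakly convergent subsequence, and the diagonal is chosen with \cref{lem:technical_lemma_from_attouch}. The only difference is at the end, where the paper obtains $x_k \to x$ from the lower bound on $f_k\conjugate$ after diagonalizing the penalized value. This is slightly more direct than your $O(\sqrt{\varepsilon})$ estimate followed by diagonalizing a maximum.

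As you suspected, the Asplund renorming is unnecessary. Any element of the possibly set-valued duality map in your optimality condition gives the same identity $\norm{x_k^\varepsilon - x}_X = \varepsilon \norm{y_k^\varepsilon}_Y$ and the same Fenchel--Young equalities.
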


Before diving into the proof,
we explain the underlying idea.
In order to find a sequence $\seq{x_k}$ with $x_k \to x$
such that $\seq{f_k\conjugate(x_k)}$ is small enough,
we just take minimizers of
$f_k\conjugate + \frac{l}{2} \norm{\cdot - x}_X^2$
with an appropriate coupling of $l$ and $k$.

\begin{proof}
	Let $x \in X$ be arbitrary.
	It is sufficient to consider $f\conjugate(x) < \infty$.
	Given $k,l \in \N$, we select
	\begin{equation*}
		x_{l,k}
		\in
		\argmin_{\tilde x \in X}
		\parens*{
			f_k\conjugate( \tilde x )
			+
			\frac{l}{2} \norm{ \tilde x - x }_X^2
		}
		.
	\end{equation*}
	Note that the existence of a minimizer follows from coercivity and reflexivity.
	Due to the reflexivity, the optimality implies the existence of $y_{l,k} \in Y \isometric X\dualspace$
	such that
	\begin{equation*}
		y_{l,k} \in \partial f_k\conjugate( x_{l,k} )
		,
		\qquad
		-y_{l,k} \in \frac{l}{2} \partial\parens*{\norm{\cdot}_X^2} ( x_{l,k} - x )
		.
	\end{equation*}
	Note that the second inclusion implies
	\begin{equation*}
		\norm{y_{l,k}}_Y
		=
		l \norm{x_{l,k} - x}_X
		\quad\text{and}\quad
		\dual{x_{l,k} - x}{-y_{l,k}}_Y
		=
		l \norm{x_{l,k} - x}_X^2
		.
	\end{equation*}
	Together with the first inclusion and the Fenchel--Young identity
	we get
	\begin{align*}
		f_k( y_{l,k} ) + f_k\conjugate( x_{l,k} )
		=
		\dual{x_{l,k}}{y_{l,k}}_Y
		&=
		\dual{x}{y_{l,k}}_Y
		+
		\dual{x_{l,k} - x}{y_{l,k}}_Y
		\\&
		=
		\dual{x}{y_{l,k}}_Y
		-
		l \norm{ x_{l,k} - x }_X^2
		.
	\end{align*}
	Consequently,
	the lower bounds on $f_k$ and $f_k\conjugate$ yield
	\begin{align*}
		l \norm{ x_{l,k} - x }_X^2
		&=
		\dual{x}{y_{l,k}}_Y
		-
		f_k( y_{l,k} )
		-
		f_k\conjugate( x_{l,k} )
		\\&
		\le
		\norm{x}_X \norm{ y_{l,k} }_Y
		+
		C( \norm{y_{l,k}}_Y + 1)
		+
		C( \norm{x_{l,k}}_X + 1)
		\\&
		=
		\norm{x}_X l \norm{ x_{l,k} - x }_X
		+
		C( l \norm{x_{l,k} - x}_X + 1)
		+
		C( \norm{x_{l,k}}_X + 1)
		.
	\end{align*}
	This shows that $\norm{x_{l,k}}_X$ is uniformly bounded.
	Hence, for each fixed $l \in \N$,
	the sequence $\seq{y_{l,k}}_k$ is bounded.
	Given $l \in \N$, we first choose a subsequence which
	realizes
	\begin{equation*}
		\limsup_{k \to \infty}\parens*{
			f_k\conjugate(x_{l,k})
			+
			l \norm{x_{l,k} - x}_X^2
		}
	\end{equation*}
	and,
	owing to the reflexivity of $Y$,
	this subsequence has a further subsequence
	such that $y_{l,k_n} \weakly y_l$ in $Y$.
	Thus,
	\begin{align*}
		\limsup_{k \to \infty}\parens*{
			f_k\conjugate(x_{l,k})
			+
			l \norm{x_{l,k} - x}_X^2
		}
		&=
		\lim_{n \to \infty} \parens*{
			f_{k_n}\conjugate(x_{l,k_n})
			+
			l \norm{x_{l,k_n} - x}_X^2
		}
		\\&
		=
		\lim_{n \to \infty} \parens*{
			\dual{x}{y_{l,k_n}}_Y - f_{k_n}(y_{l,k_n})
		}
		\\&
		\le
		\dual{x}{y_l}_Y - f(y_l)
		\le
		f\conjugate(x)
		<
		\infty
		,
	\end{align*}
	where the first inequality follows from \cref{lem:aux_lower_bound}.
	Owing to \cref{lem:technical_lemma_from_attouch},
	we find a sequence $\seq{l_k} \subset \N$ with $l_k \to \infty$
	as $k \to \infty$ and
	\begin{equation}
		\label{eq:diagonal_sequence_inequality}
		\limsup_{k \to \infty}\parens*{
			f_k\conjugate(x_{l_k,k})
			+
			l_k \norm{x_{l_k,k} - x}_X^2
		}
		\le
		f\conjugate(x)
		<
		\infty
		.
	\end{equation}
	We define $x_k := x_{l_k,k}$.
	Using the lower bound for $f_k\conjugate$, we find
	\begin{align*}
		f_k\conjugate(x_k)
		&\ge
		-C (\norm{ x_k }_X + 1 )
		\ge
		-C (\norm{ x_k - x }_X + \norm{x}_X + 1 )
		\\&
		\ge
		-\frac{l_k}{2} \norm{ x_k - x }_X^2
		-\frac{C^2}{2 l_k}
		-C (\norm{x}_X + 1 )
		.
	\end{align*}
	Inserting this into \eqref{eq:diagonal_sequence_inequality}
	and using $l_k \to \infty$ shows $\norm{x_k - x}_X \to 0$.
	Similarly,
	\eqref{eq:diagonal_sequence_inequality} implies
	\begin{equation*}
		\limsup_{k \to \infty}
		f_k\conjugate(x_k)
		\le
		\limsup_{k \to \infty}\parens*{
			f_k\conjugate(x_k)
			+
			l_k \norm{x_k - x}_X^2
		}
		\le
		f\conjugate(x)
		.
	\end{equation*}
	Hence, $\seq{x_k}$ is the desired recovery sequence.
\end{proof}
We mention that the reflexivity of $Y$
is crucial in the proof.
Without it, we cannot extract a weakly convergent subsequence of $\seq{y_{l,k}}_k$
which is needed to apply \eqref{BPfw}.
Further, without the assumption that $Y$ is reflexive,
the optimality condition would provide a subgradient from the dual space $X\dualspace$
which is a proper superspace of $Y$.

We briefly mention that the lower bounds on $f_k$ and $f_k\conjugate$
are not restrictive, see, e.g.,
\cref{lem:uniformly_proper_bound,lem:bound_on_fk_conjugate,lem:bound_on_fk}
below.

Next, we want to provide two variations of \cref{prop:strong_dual_recovery}.
First, we show that
\eqref{BPfcw} implies \eqref{RSfs}
by almost the same argument.
Interestingly,
this can be done without using reflexivity of the space $Y$.
\begin{proposition}
	\label{prop:strong_primal_recovery}
	We assume that the following conditions are satisfied.
	\begin{enumerate}[label=(\arabic*)]
		\item
			The space $Y$ is reflexive or separable.
		\item
			There exists $C \ge 0$ such that
			$f_k \ge - C \parens{ \norm{\cdot}_Y + 1}$
			and
			$f_k\conjugate \ge - C \parens{ \norm{\cdot}_X + 1}$
			hold for all $k \in \N$.
	\end{enumerate}
	Then, \eqref{BPfcw} implies \eqref{RSfs}.
\end{proposition}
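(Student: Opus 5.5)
The plan is to mirror the proof of \cref{prop:strong_dual_recovery}, but to regularize on the primal side and move the compactness argument to the dual space $X$. There, bounded sets are sequentially weak-$\star$ compact: for separable $Y$ by Banach--Alaoglu together with metrizability of the weak-$\star$ topology on bounded sets, and for reflexive $Y$ directly. Fix $y \in Y$ with $f(y) < \infty$ (otherwise there is nothing to show). For $k,l \in \N$, consider the Moreau-type envelope
\begin{equation*}
	e_{l,k} := \inf_{\tilde y \in Y}\parens*{ f_k(\tilde y) + \frac{l}{2}\norm{\tilde y - y}_Y^2 }.
\end{equation*}
Since $Y$ need not be reflexive, a minimizer may not exist. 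I will therefore pick $y_{l,k}$ with $f_k(y_{l,k}) + \frac l2 \norm{y_{l,k}-y}_Y^2 \le e_{l,k} + \frac1l$; this is possible once $e_{l,k} > -\infty$, which follows from the linear lower bound on $f_k$.

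The key step is to bound $\limsup_k e_{l,k}$ by $f(y)$ using the dual. The norm is continuous, so Fenchel--Rockafellar duality applies with attainment in $X$:
\begin{equation*}
	e_{l,k} = -\min_{x \in X}\parens*{ f_k\conjugate(x) + \frac{1}{2l}\norm{x}_X^2 - \dual{x}{y}_Y },
\end{equation*}
with a minimizer $x_{l,k}$. The lower bound $f_k\conjugate \ge -C(\norm{\cdot}_X+1)$ shows that $\seq{x_{l,k}}_k$ is bounded for each fixed $l$. Next I pass to a subsequence $k_n$ realizing $\limsup_k e_{l,k}$, and then to a further subsequence with $x_{l,k_n} \weaklystar x_l$. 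By \cref{lem:aux_lower_bound} applied with \eqref{BPfcw}, together with weak-$\star$ lower semicontinuity of the norm, I get
\begin{equation*}
	\limsup_{k} e_{l,k} \le -\parens*{ f\conjugate(x_l) + \tfrac{1}{2l}\norm{x_l}_X^2 - \dual{x_l}{y}_Y } \le \sup_{x}\parens*{\dual{x}{y}_Y - f\conjugate(x)} = f(y).
\end{equation*}
The last equality uses $f = f\biconjugate$.

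Consequently, $\limsup_l\limsup_k\parens{ f_k(y_{l,k}) + \frac l2\norm{y_{l,k}-y}_Y^2 } \le f(y)$. By \cref{lem:technical_lemma_from_attouch} there is a sequence $l_k \to \infty$ such that $y_k := y_{l_k,k}$ satisfies $\limsup_k\parens{ f_k(y_k) + \frac{l_k}{2}\norm{y_k-y}_Y^2 } \le f(y) < \infty$. Exactly as at the end of the proof of \cref{prop:strong_dual_recovery}, the bound $f_k(y_k) \ge -C(\norm{y_k-y}_Y + \norm{y}_Y + 1) \ge -\frac{l_k}{4}\norm{y_k-y}_Y^2 - \frac{C^2}{l_k} - C(\norm y_Y+1)$ then forces $\norm{y_k - y}_Y \to 0$, and it also gives $\limsup_k f_k(y_k) \le f(y)$.

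I expect the main obstacle to be the duality step: it replaces the subgradient argument of \cref{prop:strong_dual_recovery}, which fails here because subgradients of $f_k\conjugate$ would live in $X\dualspace$. I have to make sure the Fenchel--Rockafellar theorem delivers both the equality and dual attainment in $X$ without any reflexivity. This holds because $\tilde y \mapsto \frac l2\norm{\tilde y - y}_Y^2$ is finite and continuous on $Y$, and its conjugate is $x \mapsto \frac{1}{2l}\norm{x}_X^2 + \dual{x}{y}_Y$. A smaller point is the use of separability: it is needed only to extract weak-$\star$ convergent subsequences from bounded sequences in $X$.
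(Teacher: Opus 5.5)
Your proof is correct, but its key step takes a different route from the paper's.

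The paper applies Ekeland's variational principle and the subdifferential sum rule. This gives approximate minimizers $y_{l,k}$ of $f_k + \frac{l}{2}\norm{\cdot - y}_Y^2$ paired with subgradients $x_{l,k} \in \partial f_k(y_{l,k})$, where the optimality system holds only up to an error $\xi_{l,k} \in B^X_{1/k}(0)$. The Fenchel--Young identity then turns $f_k(y_{l,k}) + l\norm{y_{l,k} - y}_Y^2$ into $\dual{x_{l,k}}{y}_Y - f_k\conjugate(x_{l,k}) + \dual{\xi_{l,k}}{y_{l,k} - y}_Y$. So the paper must first bound $y_{l,k}$ uniformly, using both linear lower bounds, to control this error term and the norms of $x_{l,k}$. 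Only then does it extract a weak-$\star$ limit and invoke \eqref{BPfcw}.

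You replace all of this by Fenchel--Rockafellar duality for the Moreau envelope. Dual attainment in $X$ is automatic there because the quadratic penalty is finite and continuous on $Y$. This decouples your primal near-minimizers from the dual minimizers entirely. You need no subgradient pairs, no error term and no a priori bound on $y_{l,k}$, and the final diagonal argument is the same as the paper's.

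One small point: you say that the lower bound on $f_k\conjugate$ makes $\seq{x_{l,k}}_k$ bounded. For that you also need the uniform estimate $e_{l,k} \ge -C(\norm{y}_Y + 1) - C^2/(2l)$, i.e., an upper bound on the dual optimal value. It follows immediately from the lower bound on $f_k$.

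Your version is shorter and shows that only an identity between optimal values is really needed. The paper's version keeps the ``subgradient pair plus Fenchel--Young identity'' template of \cref{prop:strong_dual_recovery,prop:weakstar_dual_recovery}, which makes the parallel with the argument behind Mosco's theorem explicit.
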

\begin{proof}
	Let $y \in Y$ be arbitrary.
	It is sufficient to consider $f(y) < \infty$.
	Given $k,l \in \N$, 
	the function
	\begin{equation*}
		Y \ni \tilde y
		\mapsto
			f_k( \tilde y )
			+
			\frac{l}{2} \norm{ \tilde y - y }_Y^2
	\end{equation*}
	is bounded from below.
	Consequently,
	we can invoke Ekeland's variational principle to find
	$y_{l,k} \in Y$ such that
	\begin{equation*}
		0
		\in
		\partial f_k( y_{l,k} )
		+
		\frac{l}{2} \partial \parens*{ \norm{\cdot}_Y^2 }( y_{l,k} - y)
		+
		B^X_{1/k}(0)
		.
	\end{equation*}
	This implies the existence of
	$x_{l,k} \in X = Y\dualspace$
	and
	$\xi_{l,k} \in B^X_{1/k}(0) \subset X$
	such that
	\begin{equation*}
		x_{l,k} \in \partial f_k( y_{l,k} )
		,
		\qquad
		\xi_{l,k} - x_{l,k} \in \frac{l}{2} \partial\parens*{\norm{\cdot}_Y^2} ( y_{l,k} - y )
		.
	\end{equation*}
	Note that the second inclusion implies
	\begin{equation*}
		\norm{\xi_{l,k} - x_{l,k}}_X
		=
		l \norm{y_{l,k} - y}_Y
		\quad\text{and}\quad
		\dual{\xi_{l,k}-x_{l,k}}{y_{l,k} - y}_Y
		=
		l \norm{y_{l,k} - y}_Y^2
		.
	\end{equation*}
	Together with the first inclusion and the Fenchel--Young identity
	we get
	\begin{align*}
		f_k( y_{l,k} ) + f_k\conjugate( x_{l,k} )
		=
		\dual{x_{l,k}}{y_{l,k}}_Y
		&=
		\dual{x_{l,k}}{y}_Y
		+
		\dual{x_{l,k} - \xi_{l,k}}{y_{l,k} - y}_Y
		+
		\dual{\xi_{l,k}}{y_{l,k} - y}_Y
		\\&
		=
		\dual{x_{l,k}}{y}_Y
		-
		l \norm{ y_{l,k} - y }_Y^2
		+
		\dual{\xi_{l,k}}{y_{l,k} - y}_Y
		.
	\end{align*}
	Consequently,
	the lower bounds on $f_k$ and $f_k\conjugate$ yield
	\begin{align*}
		l \norm{ y_{l,k} - y }_Y^2
		&=
		\dual{x_{l,k}}{y}_Y
		+
		\dual{\xi_{l,k}}{y_{l,k} - y}_Y
		-
		f_k( y_{l,k} )
		-
		f_k\conjugate( x_{l,k} )
		\\&
		\le
		\norm{y}_Y \norm{ x_{l,k} }_X
		+
		\frac1k \norm{y_{l,k} - y}_Y
		+
		C( \norm{y_{l,k}}_Y + 1)
		+
		C( \norm{x_{l,k}}_X + 1)
		.
	\end{align*}
	Together with
	$\norm{x_{l,k}}_X \le l \norm{y_{l,k} - y}_Y + 1/k$,
	this shows that $\norm{y_{l,k}}_Y$ is uniformly bounded.
	Hence, for each fixed $l \in \N$,
	the sequence $\seq{x_{l,k}}_k$ is bounded.
	Given $l \in \N$, we first choose a subsequence which
	realizes
	\begin{equation*}
		\limsup_{k \to \infty}\parens*{
			f_k(y_{l,k})
			+
			l \norm{y_{l,k} - y}_Y^2
		}
	\end{equation*}
	and,
	owing to the separability or reflexivity of $Y$,
	this subsequence has a further subsequence
	such that $x_{l,k_n} \weaklystar x_l$ in $X$.
	Thus,
	\begin{align*}
		\limsup_{k \to \infty}\parens*{
			f_k(y_{l,k})
			+
			l \norm{y_{l,k} - y}_Y^2
		}
		&=
		\lim_{n \to \infty} \parens*{
			f_{k_n}(y_{l,k_n})
			+
			l \norm{y_{l,k_n} - y}_Y^2
		}
		\\&
		=
		\lim_{n \to \infty} \parens*{
			\dual{x_{l,k_n}}{y}_Y - f_{k_n}\conjugate(x_{l,k_n})
			+
			\dual{\xi_{l,k_n}}{y_{l,k_n} - y}_Y
		}
		.
	\end{align*}
	Since $y_{l,k}$ is bounded and $\norm{\xi_{l,k}}_X \le 1/k \to 0$,
	we can continue with
	\begin{equation*}
		\limsup_{k \to \infty}\parens*{
			f_k(y_{l,k})
			+
			l \norm{y_{l,k} - y}_Y^2
		}
		\le
		\dual{x_l}{y}_Y - f\conjugate(x_l)
		\le
		f(y)
		<
		\infty
		.
	\end{equation*}
	Owing to \cref{lem:technical_lemma_from_attouch},
	we find a sequence $\seq{l_k} \subset \N$ with $l_k \to \infty$
	as $k \to \infty$ and
	\begin{equation}
		\label{eq:diagonal_sequence_inequality_dual}
		\limsup_{k \to \infty}\parens*{
			f_k(y_{l_k,k})
			+
			l_k \norm{y_{l_k,k} - y}_Y^2
		}
		\le
		f(y)
		<
		\infty
		.
	\end{equation}
	We define $y_k := y_{l_k,k}$.
	Using the lower bound for $f_k$, we find
	\begin{align*}
		f_k(y_k)
		&\ge
		-C (\norm{ y_k }_Y + 1 )
		\ge
		-C (\norm{ y_k - y }_Y + \norm{y}_Y + 1 )
		\\&
		\ge
		-\frac{l_k}{2} \norm{ y_k - y }_Y^2
		-\frac{C^2}{2 l_k}
		-C (\norm{y}_Y + 1 )
		.
	\end{align*}
	Inserting this into \eqref{eq:diagonal_sequence_inequality_dual}
	and using $l_k \to \infty$ shows $\norm{y_k - y}_Y \to 0$.
	Similarly,
	\eqref{eq:diagonal_sequence_inequality_dual} implies
	\begin{equation*}
		\limsup_{k \to \infty}
		f_k(y_k)
		\le
		\limsup_{k \to \infty}\parens*{
			f_k(y_k)
			+
			l_k \norm{y_k - y}_Y^2
		}
		\le
		f(y)
		.
	\end{equation*}
	Hence, $\seq{y_k}$ is the desired recovery sequence.
\end{proof}

In the second variation of \cref{prop:strong_dual_recovery},
we prove the existence of a weak-$\star$ convergent recovery sequence
for the conjugate sequence.
We adapt the idea of the proof of \cref{prop:strong_dual_recovery}.
However, we now pick minimizers of
\begin{equation*}
	\tilde x
	\mapsto
	f_k\conjugate(\tilde x)
	+
	\frac{l}{2}
	\sum_{j = 1}^l \dual{\tilde x - x}{y^{(j)}}_Y^2
	,
\end{equation*}
where $\seq{y^{(j)}} \subset Y$ is dense.
In order to get existence of a minimizer
and a bound on the norm of those minimizers,
we need an additional assumption on the functions $f_k\conjugate$.
\begin{proposition}
	\label{prop:weakstar_dual_recovery}
	We assume that the following conditions are satisfied.
	\begin{enumerate}[label=(\arabic*)]
		\item
			\label{prop:weakstar_dual_recovery:1}
			The space $Y$ is separable.
		\item
			\label{prop:weakstar_dual_recovery:2}
			There exist $\seq{r_k} \subset [0,\infty)$, $c > 0$, $J \in \N$ and $y^{(j)} \in Y$, $j = 1,\ldots,J$,
			such that
			\begin{equation*}
				\frac{c}{2} \norm{x}_X^2
				\le
				f_k\conjugate(x)
				+
				\frac12 \sum_{j = 1}^J \dual{x}{y^{(j)}}_Y^2
				\qquad
				\forall x \in B^X_{r_k}(0)
			\end{equation*}
			holds for all $k \in \N$
			and
			$r_k \to \infty$.
		\item
			\label{prop:weakstar_dual_recovery:3}
			There exists $C \ge 0$ such that
			$f_k \ge - C (\norm{\cdot}_Y + 1)$
			holds for all $k \in \N$.
	\end{enumerate}
	Then, \eqref{BPfs} implies \eqref{RSfcw}.
\end{proposition}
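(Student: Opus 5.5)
Fix $x \in X$ with $f\conjugate(x) < \infty$, and let $\seq{y^{(j)}}_{j\in\N} \subset Y$ be dense. Without loss of generality, the first $J$ elements are the vectors from assumption \ref{prop:weakstar_dual_recovery:2}. For $k,l \in \N$ with $l \ge J$, I set
\begin{equation*}
	\phi_{l,k}(\tilde x) := f_k\conjugate(\tilde x) + \frac{l}{2}\sum_{j=1}^l \dual{\tilde x - x}{y^{(j)}}_Y^2 ,
\end{equation*}
and let $x_{l,k}$ be a minimizer of $\phi_{l,k}$.

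\textbf{Existence and boundedness of $x_{l,k}$.} The function $\phi_{l,k}$ is weak-$\star$ lower semicontinuous, since $f_k\conjugate$ is a conjugate and the quadratic terms are weak-$\star$ continuous. Expanding $\dual{\tilde x - x}{y^{(j)}}^2 \ge \frac12\dual{\tilde x}{y^{(j)}}^2 - \dual{x}{y^{(j)}}^2$ and using assumption \ref{prop:weakstar_dual_recovery:2} with $l \ge 2$ gives $\phi_{l,k}(\tilde x) \ge \frac{c}{2}\norm{\tilde x}_X^2 - \text{const}_l$ on $B^X_{r_k}(0)$. This bound only holds on a ball. Convexity of $\phi_{l,k}$ along rays from a point of controlled value (e.g.\ $x$ itself, or any point with finite value) then yields growth outside the ball as well, since a convex function that is large on the sphere of radius $r_k$ grows at least linearly beyond it. Hence the sublevel sets are bounded, and Banach--Alaoglu gives a minimizer. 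I also want $\norm{x_{l,k}}_X$ bounded uniformly in $k$ for fixed $l$. Comparing with $\phi_{l,k}(x) = f_k\conjugate(x)$ is problematic, since $f_k\conjugate(x)$ need not be bounded. Instead I would use the dual problem. Fenchel--Rockafellar duality (the quadratic term is finite and continuous) gives $y_{l,k} \in \partial \phi$-structure: there is $y_{l,k} \in \operatorname{span}\{y^{(1)},\dots,y^{(l)}\} \subset Y$ with $y_{l,k} \in \partial f_k\conjugate(x_{l,k})$, $y_{l,k} = -l\sum_j \dual{x_{l,k}-x}{y^{(j)}} y^{(j)}$. Crucially, this subgradient lies in $Y$ because the quadratic part only involves the finitely many $y^{(j)}$, so the non-reflexivity issue from \cref{prop:strong_dual_recovery} disappears. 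Then Fenchel--Young gives
\begin{equation*}
	f_k(y_{l,k}) + f_k\conjugate(x_{l,k}) = \dual{x}{y_{l,k}}_Y - l\sum_{j\le l}\dual{x_{l,k}-x}{y^{(j)}}^2 .
\end{equation*}
Combining this with $f_k \ge -C(\norm{\cdot}_Y+1)$, the coercivity from \ref{prop:weakstar_dual_recovery:2}, and the finite-dimensional bound $\norm{y_{l,k}}_Y \le l\,M_l\,(\sum_j \dual{x_{l,k}-x}{y^{(j)}}^2)^{1/2}$ with $M_l$ depending on $l$ only, a quadratic-versus-linear argument bounds $\norm{x_{l,k}}_X$ and $\norm{y_{l,k}}_Y$ uniformly in $k$ for each fixed $l$. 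The circularity in the ball condition is handled because $r_k \to \infty$: for large $k$ the a priori bound lies inside $B^X_{r_k}(0)$.

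\textbf{Limit in $k$ and diagonalization.} For fixed $l$, I pick a subsequence realizing $\limsup_k \phi_{l,k}(x_{l,k})$ (using the value $f_k\conjugate(x_{l,k}) + l\sum_j\dual{\cdot}{y^{(j)}}^2$). Along it, $y_{l,k_n}$ lies in a finite-dimensional space and is bounded, so it converges strongly to some $y_l$. Then \eqref{BPfs}, via \cref{lem:aux_lower_bound}, gives
\begin{equation*}
	\limsup_k\Bigl(f_k\conjugate(x_{l,k}) + l\sum_{j\le l}\dual{x_{l,k}-x}{y^{(j)}}^2\Bigr) \le \dual{x}{y_l}-f(y_l) \le f\conjugate(x).
\end{equation*}
\cref{lem:technical_lemma_from_attouch} provides $l_k \to \infty$ with the same bound along $x_k := x_{l_k,k}$. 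This immediately gives $\limsup f_k\conjugate(x_k) \le f\conjugate(x)$, once $f_k\conjugate(x_k)$ is bounded below. To see that, I need $\norm{x_k}_X$ bounded uniformly, which I get from \ref{prop:weakstar_dual_recovery:2} together with the diagonal inequality: the coercive $\frac c2\norm{x_k}^2$ dominates the $J$ fixed quadratic terms, and those are controlled because the $l_k$-weighted sum is bounded. Given this, each $\dual{x_k - x}{y^{(j)}}^2 \le \text{const}/l_k \to 0$ for fixed $j$. Boundedness plus density of $\seq{y^{(j)}}$ then yields $x_k \weaklystar x$.

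\textbf{Main obstacle.} The delicate step is the uniform norm bounds: showing that $x_{l,k}$ stays inside $B^X_{r_k}(0)$, where the coercivity estimate is valid, and making the bound independent of $k$ (and, for the diagonal sequence, of $l_k$). I would first try a convexity/ray argument comparing with a point of bounded value. If that fails, I would restrict the minimization to $B^X_{r_k}(0)$ and check, via the estimates, that the minimizer lies in the interior for large $k$, so that the optimality system still holds.
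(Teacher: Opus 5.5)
Your skeleton is the paper's: the same penalized functional, a subgradient $y_{l,k}\in\operatorname{span}\{y^{(1)},\dots,y^{(l)}\}\subset Y$, Fenchel--Young, finite-dimensional compactness combined with \eqref{BPfs}, \cref{lem:technical_lemma_from_attouch}, and density. However, the step you flag as the main obstacle is genuinely left open, and none of the routes you sketch closes it.

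Inequality \ref{prop:weakstar_dual_recovery:2} is only available on $B^X_{r_k}(0)$. So before anything else you must show that $\phi_{l,k}$ has a minimizer and that it lies in $B^X_{r_k}(0)$.
\begin{itemize}
\item The ray argument needs a point $z_k$ inside the ball whose value $\phi_{l,k}(z_k)$ is bounded uniformly in $k$, so that it eventually drops below your sphere bound $\frac c2 r_k^2-\mathrm{const}$. An arbitrary point of finite value does not do this, and you rightly discard $z_k=x$.
\item Your ``quadratic-versus-linear'' estimate via the optimality system is circular. It presupposes that the minimizer exists and applies \ref{prop:weakstar_dual_recovery:2} at $x_{l,k}$, i.e., it assumes $\norm{x_{l,k}}_X\le r_k$. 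Observing that the resulting bound is $<r_k$ for large $k$ does not remove that hypothesis.
\item The fallback of minimizing over $B^X_{r_k}(0)$ has the same defect. Proving that the constrained minimizer is interior requires $\inf_{B^X_{r_k}(0)}\phi_{l,k}<\frac c2 r_k^2-\mathrm{const}$, which again needs a controlled comparison point. Moreover, a minimizer on the sphere in general carries a normal-cone multiplier from $X\dualspace$ rather than $Y$, which destroys the Fenchel--Young identity with $f_k$.
\end{itemize}

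The missing idea is that assumption \ref{prop:weakstar_dual_recovery:3} provides exactly this comparison point. It must therefore be used twice, not only in the Fenchel--Young bound. Since $\inf_Y\parens{f_k+C\norm{\cdot}_Y}\ge -C$, Fenchel duality yields $\hat x_k\in X$ with $\norm{\hat x_k}_X\le C$ and $f_k\conjugate(\hat x_k)\le C$. (Equivalently, a Hahn--Banach sandwich gives an affine minorant of $f_k$ with slope in $B^X_C(0)$.) Then $\phi_{l,k}(\hat x_k)\le C+\frac l2\sum_{j\le l}(C+\norm{x}_X)^2\norm{y^{(j)}}_Y^2=:\tilde C_l$, independently of $k$. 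On the other hand, using $l/2\ge 1$ and $l\ge J$, you get $\phi_{l,k}\ge\frac c2 r_k^2-\hat C$ on $\set{\norm{\tilde x}_X=r_k}$, where $\hat C=\sum_{j\le J}\dual{x}{y^{(j)}}_Y^2$ is independent of $l$ and $k$.

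Hence, for $k\ge K_l$, convexity gives $\phi_{l,k}>\phi_{l,k}(\hat x_k)$ whenever $\norm{\tilde x}_X\ge r_k$. The sublevel set at level $\phi_{l,k}(\hat x_k)$ is then weak-$\star$ compact and contained in $B^X_{r_k}(0)$, so a minimizer exists there. For $k<K_l$ the paper simply sets $x_{l,k}:=x$. Only after this step are your optimality system and your uniform estimates justified; with it inserted, the rest of your argument goes through as in the paper.

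One minor point: in the diagonal step you do not need a lower bound on $f_k\conjugate(x_k)$ to get $\limsup_k f_k\conjugate(x_k)\le f\conjugate(x)$, because the added quadratic term is nonnegative.
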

We mention that this result follows from
(the proof of) \cite[Theorem~5.1]{CorreaPerezArosSantander2025}
applied to $f_k\conjugate$ replaced by
\begin{equation*}
	x
	\mapsto
	f_k\conjugate(x)
	+
	\frac12 \sum_{j = 1}^J \dual{x}{y^{(j)}}_Y^2
	.
\end{equation*}
However, due to our stronger assumptions,
it is possible to give an elementary and direct proof.
\begin{proof}
	Assumption \ref{prop:weakstar_dual_recovery:3}
	implies that the infimal value of
	$f_k + C \norm{\cdot}_Y$ is
	at least $-C$.
	Owing to Fenchel duality,
	the Fenchel dual
	$-f_k\conjugate - \delta_{B_C^X(0)}$
	has a maximizer
	with objective value also at least $-C$,
	see \cite[Section~2.5.4]{BonnansShapiro2000}.
	Here, $\delta_{B_C^X(0)}$ is the indicator function
	of the closed ball $B_C^X(0) \subset X$.
	Consequently,
	there exists $\hat x_k \in X$
	with $\norm{\hat x_k}_X \le C$
	and $f_k\conjugate(\hat x_k) \le C$.

	We start by extending the set
	$\set{y^{(1)}, \ldots, y^{(J)}}$
	to a countable dense subset $\set{y^{(j)} \given j \in \N} \subset Y$.
	In case $J = 1$, we simply redefine $J := 2$.
	Thus, assumption \ref{prop:weakstar_dual_recovery:2}
	is still satisfied and we have $J \ge 2$ in any case.
	In order to prove \eqref{RSfcw},
	we fix $x \in X$.
	Clearly, it is enough to consider the case $f\conjugate(x) < \infty$.
	Given $k \in \N$ and $l \ge J$,
	we consider the function
	$g_{l,k} \colon X \to (-\infty,\infty]$ via
	\begin{equation*}
		g_{l,k}(\tilde x)
		:=
		f_k\conjugate(\tilde x)
		+
		\frac{l}{2}
		\sum_{j = 1}^l \dual{\tilde x - x}{y^{(j)}}_Y^2
		.
	\end{equation*}
	We show that for all $l \ge J$ and $k$ large enough (depending on $l$)
	this function has a minimizer on the ball $B^X_{r_k}(0)$.
	To this end, let $\tilde x \in X$ with $\norm{\tilde x}_X = r_k$
	be given.
	Using the inequality
	$(a - b)^2 \ge \frac12 a^2 - b^2$ for $a,b \in \R$,
	we estimate
	(using $l \ge J \ge 2$)
	\begin{equation*}
		g_{l,k}(\tilde x)
		\ge
		f_k\conjugate(\tilde x)
		+
		\sum_{j = 1}^J \dual{\tilde x - x}{y^{(j)}}_Y^2
		\ge
		f_k\conjugate(\tilde x)
		+
		\frac12 \sum_{j = 1}^J \dual{\tilde x}{y^{(j)}}_Y^2
		-
		\sum_{j = 1}^J \dual{x}{y^{(j)}}_Y^2
		\ge
		\frac{c}{2} r_k^2 - \hat C
		,
	\end{equation*}
	where $\hat C := \sum_{j = 1}^J \dual{x}{y^{(j)}}_Y^2$ is a constant.
	On the other hand,
	\begin{equation*}
		g_{l,k}(\hat x_k)
		=
		f_k\conjugate(\hat x_k)
		+
		\frac{l}{2} \sum_{j = 1}^l \dual{\hat x_k - x}{y^{(j)}}_Y^2
		\le
		C
		+
		\frac{l}{2} \sum_{j = 1}^l \parens*{C + \norm{x}_X}^2 \norm{y^{(j)}}_Y^2
		=:
		\tilde C_l
		.
	\end{equation*}
	Consequently, for each $l \ge J$, there exists $K_l \in \N$
	such that for all $k \ge K_l$ we have
	$\norm{\hat x_k}_X \le C < r_k$
	and
	$g_{l,k}(\tilde x) > g_{l,k}(\hat x_k)$ for all $\tilde x \in X$
	with $\norm{\tilde x}_X = r_k$.
	Since $g_{l,k}$ is convex, this implies
	$g_{l,k}(\tilde x) > g_{l,k}(\hat x_k)$ for all $\tilde x \in X$
	with $\norm{\tilde x}_X \ge r_k$.
	Thus, we can use a standard argument to check that $g_{l,k}$
	possesses a minimizer lying in $B_{r_k}^X(0)$.
	For $k \ge K_l$,
	we select
	\begin{equation*}
		x_{l,k}
		\in
		\argmin_{\tilde x \in X}
		g_{l,k}(\tilde x)
		=
		\argmin_{\tilde x \in X}
		\parens*{
			f_k\conjugate(\tilde x)
			+
			\frac{l}{2}
			\sum_{j = 1}^l \dual{\tilde x - x}{y^{(j)}}_Y^2
		}
		\subset
		B^X_{r_k}(0)
		.
	\end{equation*}
	For completeness, we also set $x_{l,k} := x$
	for $l < J$ or $k < K_l$.

	Consider $l \ge J$ and $k \ge K_l$.
	We evaluate the optimality condition at $x_{l,k}$.
	Since the term under the sum is continuous, the chain rule applies.
	Consequently,
	\begin{equation*}
		y_{l,k}
		:=
		- l \sum_{j = 1}^l \dual{x_{l,k} - x}{y^{(j)}}_Y y^{(j)}
		\in
		\partial f_k\conjugate(x_{l,k})
		.
	\end{equation*}
	This directly implies the bound
	\begin{equation}
		\label{eq:bound_ymn}
		\norm{y_{l,k}}_Y
		\le
		l \sum_{j = 1}^l \abs{\dual{ x_{l,k} - x}{y^{(j)}}_Y} \norm{y^{(j)}}_Y
	\end{equation}
	and from the Fenchel--Young identity we get
	\begin{align*}
		f_k(y_{l,k}) + f_k\conjugate(x_{l,k})
		&=
		\dual{x_{l,k}}{y_{l,k}}_Y
		=
		\dual{x_{l,k} - x}{y_{l,k}}_Y
		+
		\dual{x}{y_{l,k}}_Y
		\\
		&=
		- l \sum_{j = 1}^l \dual{x_{l,k} - x}{y^{(j)}}_Y^2
		+
		\dual{x}{y_{l,k}}_Y
		,
	\end{align*}
	i.e.,
	\begin{equation}
		\label{eq:estimate_fnconj}
		f_k\conjugate(x_{l,k})
		+
		l \sum_{j = 1}^l \dual{x_{l,k} - x}{y^{(j)}}_Y^2
		=
		\dual{x}{y_{l,k}}_Y
		-
		f_k(y_{l,k})
		.
	\end{equation}
	The growth assumption on $f_k\conjugate$
	and $x_{l,k} \in B^X_{r_k}(0)$
	can be used to estimate
	\begin{align*}
		\frac{c}{2} \norm{ x_{l,k} }_X^2
		&\le
		f_k\conjugate(x_{l,k})
		+
		\frac12 \sum_{j = 1}^J \dual{x_{l,k}}{y^{(j)}}_Y^2
		\\
		&=
		f_k\conjugate(x_{l,k})
		+
		\frac12 \sum_{j = 1}^J \parens*{
			\dual{x_{l,k} - x}{y^{(j)}}_Y
			+
			\dual{x}{y^{(j)}}_Y
		}^2
		\\
		&\le
		f_k\conjugate(x_{l,k})
		+
		\sum_{j = 1}^J \parens*{
			\dual{x_{l,k} - x}{y^{(j)}}_Y^2
			+
			\dual{x}{y^{(j)}}_Y^2
		}
		.
	\end{align*}
	We again use
	$\hat C := \sum_{j = 1}^J \dual{x}{y^{(j)}}_Y^2$.
	Together with \eqref{eq:estimate_fnconj}, we get
	\begin{align}
		\label{eq:estimate_for_uniform_bound}
		\frac{c}{2} \norm{ x_{l,k} }_X^2
		+
		(l-1)
		\sum_{j = 1}^l \dual{x_{l,k} - x}{y^{(j)}}_Y^2
		&\le
		f_k\conjugate(x_{l,k})
		+
		l \sum_{j = 1}^l \dual{x_{l,k} - x}{y^{(j)}}_Y^2
		+
		\hat C
		\\
		\nonumber
		&=
		\dual{x}{y_{l,k}}_Y
		-
		f_k(y_{l,k})
		+
		\hat C
		.
	\end{align}
	Next, we use the lower bound on $f_k$ as well as \eqref{eq:bound_ymn},
	to arrive at
	\begin{align*}
		(l-1)
		\sum_{j = 1}^l \dual{x_{l,k} - x}{y^{(j)}}_Y^2
		&\le
		\norm{y_{l,k}}_Y
		(\norm{x}_X + C)
		+
		C + \hat C
		\\&\le
		l \sum_{j = 1}^l \abs{\dual{ x_{l,k} - x}{y^{(j)}}_Y} \norm{y^{(j)}}_Y
		(\norm{x}_X + C)
		+
		C + \hat C
		.
	\end{align*}
	This inequality holds for all $l \ge J$ and $k \ge K_l$.
	From this estimate we obtain that for fixed $l \ge J$,
	the sequences $\seq{\dual{x_{l,k} - x}{y^{(j)}}_Y}_k \subset \R$, $j = 1,\ldots,l$,
	are bounded.
	For fixed $l \ge J$, we first select a subsequence which realizes
	\begin{equation*}
		\limsup_{k \to \infty}
		\braces*{
			f_k\conjugate(x_{l,k})
			+
			l \sum_{j = 1}^l \dual{x_{l,k} - x}{y^{(j)}}_Y^2
		}
	\end{equation*}
	and, afterwards,
	we extract a subsequence such that
	$\dual{x_{l,k_n} - x}{y^{(j)}}_Y \to \alpha_{l,j} \in \R$
	as $n \to \infty$
	for all $j = 1,\ldots, l$.
	Note that this implies
	\begin{equation*}
		y_{l,k_n}
		=
		- l \sum_{j = 1}^l \dual{x_{l,k_n} - x}{y^{(j)}}_Y y^{(j)}
		\to
		- l \sum_{j = 1}^l \alpha_{l,j} y^{(j)}
		=:
		y_l
		\in
		Y
	\end{equation*}
	as $n \to \infty$
	strongly in $Y$
	and this allows us to use the assumption on $f_k$.
	Indeed, together with
	\eqref{eq:estimate_fnconj} and \cref{lem:aux_lower_bound}
	we obtain
	\begin{align*}
		\MoveEqLeft
		\limsup_{k \to \infty}
		\braces*{
			f_k\conjugate(x_{l,k})
			+
			l \sum_{j = 1}^l \dual{x_{l,k} - x}{y^{(j)}}_Y^2
		}
		\\
		&=
		\lim_{n \to \infty}
		\braces*{
			f_{k_n}\conjugate(x_{l,k_n})
			+
			l \sum_{j = 1}^l \dual{x_{l,k_n} - x}{y^{(j)}}_Y^2
		}
		\\&
		=
		\lim_{n \to \infty}
		\braces*{
			\dual{x}{y_{l,k_n}}_Y
			-
			f_{k_n}(y_{l,k_n})
		}
		=
		\dual{x}{y_l}_Y
		-
		\liminf_{n \to \infty}
		f_{k_n}(y_{l,k_n})
		\\&
		\le
		\dual{x}{y_l}_Y
		-
		f(y_l)
		.
	\end{align*}
	Using the definition of $f\conjugate$,
	the last expression can be bounded by $f\conjugate(x)$.
	Consequently,
	for each fixed $l \ge J$ we have
	\begin{equation*}
		\limsup_{k \to \infty}
		\braces*{
			f_k\conjugate(x_{l,k})
			+
			l \sum_{j = 1}^l \dual{x_{l,k} - x}{y^{(j)}}_Y^2
		}
		\le
		f\conjugate(x)
		<
		\infty
		.
	\end{equation*}
	This implies
	\begin{equation*}
		\limsup_{l \to \infty}
		\limsup_{k \to \infty}
		\braces*{
			f_k\conjugate(x_{l,k})
			+
			l \sum_{j = 1}^l \dual{x_{l,k} - x}{y^{(j)}}_Y^2
		}
		\le
		f\conjugate(x)
		.
	\end{equation*}
	Now, we apply \cref{lem:technical_lemma_from_attouch}
	and this gives us a sequence $\seq{l_k} \subset \N$
	with $l_k \to \infty$
	such that
	\begin{align*}
		\MoveEqLeft
		\limsup_{k \to \infty}
		\braces*{
			f_k\conjugate(x_{l_k,k})
			+
			l_k \sum_{j = 1}^{l_k} \dual{x_{l_k,k} - x}{y^{(j)}}_Y^2
		}
		\\&
		\le
		\limsup_{l \to \infty}
		\limsup_{k \to \infty}
		\braces*{
			f_k\conjugate(x_{l,k})
			+
			l \sum_{j = 1}^l \dual{x_{l,k} - x}{y^{(j)}}_Y^2
		}
		\le
		f\conjugate(x)
		<
		\infty
		.
	\end{align*}
	We set $x_k := x_{l_k,k}$.
	In particular, this shows
	$\limsup_{k \to \infty} f_k\conjugate(x_k) \le f\conjugate(x)$,
	and together with \eqref{eq:estimate_for_uniform_bound}
	we see that $\seq{x_k} \subset X$ is bounded
	and
	$\dual{x_k - x}{y^{(j)}}_Y \to 0$.
	The density of $\seq{y^{(j)}} \subset Y$
	implies
	$x_k \weaklystar x$
	and this finishes the proof.
\end{proof}

Finally,
we demonstrate that the assumption
$f_k \ge -C (\norm{\cdot}_Y + 1)$
is not restrictive.
\begin{lemma}
	\label{lem:uniformly_proper_bound}
	Assume that
	the sequence $\seq{f_k\conjugate}$ is uniformly proper,
	i.e.,
	there exists a bounded sequence $\seq{x_{0,k}} \subset X$
	such that $f_k\conjugate(x_{0,k})$ is bounded from above.
	Then, there exists a constant $C \ge 0$ such that
	$f_k \ge - C (\norm{\cdot}_Y + 1)$
	for all $k \in \N$.
\end{lemma}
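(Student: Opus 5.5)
The plan is to apply the Fenchel--Young inequality at the given points $x_{0,k}$ and turn the resulting bound into a single uniform constant.

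Let $M \ge 0$ satisfy $\norm{x_{0,k}}_X \le M$ for all $k$, and let $M'$ satisfy $f_k\conjugate(x_{0,k}) \le M'$ for all $k$; both exist by assumption. For any $y \in Y$ and any $k \in \N$, the Fenchel--Young inequality gives
\begin{equation*}
	f_k(y) \ge \dual{x_{0,k}}{y}_Y - f_k\conjugate(x_{0,k}).
\end{equation*}
This is where $f_k \in \Gamma_0(Y)$ enters: it guarantees $f_k = f_k\biconjugate$, although the inequality $f_k \ge \dual{x}{\cdot}_Y - f_k\conjugate(x)$ holds for any function directly from the definition of the conjugate.

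Next we bound the two terms on the right. The pairing satisfies $\dual{x_{0,k}}{y}_Y \ge -M \norm{y}_Y$, and the conjugate term satisfies $-f_k\conjugate(x_{0,k}) \ge -M'$. Combining these,
\begin{equation*}
	f_k(y) \ge -M\norm{y}_Y - M' \ge -C\parens{\norm{y}_Y + 1}
	\quad\text{with } C := \max\set{M, M', 0}.
\end{equation*}
This holds for all $y \in Y$ and all $k \in \N$, which is the claim.

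There is essentially no obstacle. The only point needing care is that $f_k\conjugate(x_{0,k})$ could be very negative. This is harmless, since it only makes $-f_k\conjugate(x_{0,k})$ larger and so strengthens the lower bound. Only the upper bound $M'$ is used, which is why the statement assumes nothing more than boundedness from above.
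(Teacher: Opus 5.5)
Your proof is correct and is essentially the paper's argument: apply the Fenchel--Young inequality at $x_{0,k}$ and absorb the bounds on $\norm{x_{0,k}}_X$ and on $f_k\conjugate(x_{0,k})$ into a single constant $C$. As you note yourself, that inequality follows from the definition of the conjugate alone, so the appeal to $f_k = f_k\biconjugate$ is not needed.
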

\begin{proof}
	From the Fenchel--Young inequality, we find
	\begin{align*}
		f_k(y)
		\ge
		\dual{x_{0,k}}{y}_Y - f_k\conjugate(x_{0,k})
		&\ge
		-\norm{y}_Y \norm{x_{0,k}}_X - f_k\conjugate(x_{0,k})
		\\&
		\ge
		-C \parens*{ \norm{y}_Y + 1 }
		\quad\forall y \in Y, k \in \N
	\end{align*}
	for some appropriate constant $C \ge 0$.
\end{proof}
A similar argument can be used to get
$f_k\conjugate \ge -C (\norm{\cdot}_X + 1)$.
However,
we provide an alternative argument which is also required later on.
\begin{lemma}
	\label{lem:bound_on_fk_conjugate}
	Assume that $Y$ is separable or reflexive,
	$\seq{f_k\conjugate}$ is uniformly proper
	and that
	\eqref{BPfcw} is satisfied.
	Then, there exists $C \ge 0$
	such that
	$f_k\conjugate \ge -C (\norm{\cdot}_X + 1)$
	for all $k \in \N$.
\end{lemma}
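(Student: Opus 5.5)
The plan is to argue by contradiction. We combine convexity with the uniformly proper sequence $\seq{x_{0,k}}$ to produce a \emph{bounded} sequence on which $f_k\conjugate$ tends to $-\infty$. We then extract a weak-$\star$ convergent subsequence and invoke \eqref{BPfcw}, which contradicts the properness of $f\conjugate$. Throughout, let $R \ge 0$ and $M \ge 0$ satisfy $\norm{x_{0,k}}_X \le R$ and $f_k\conjugate(x_{0,k}) \le M$ for all $k$.

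\emph{Step 1: finitely many indices cause no trouble.} Each $f_k\conjugate \in \Gamma_0(X)$ is proper and weak-$\star$ lower semicontinuous, being a conjugate. Hence it possesses a continuous affine minorant $x \mapsto \dual{x}{y_k}_Y - c_k$ with $y_k \in Y$, and therefore $f_k\conjugate \ge -C_k(\norm{\cdot}_X+1)$ for some $C_k$. So if the claim fails, there are indices $k_n$ and points $x_n \in X$ with
\begin{equation*}
	f_{k_n}\conjugate(x_n) < -n\parens*{\norm{x_n}_X + 1},
\end{equation*}
and the $k_n$ cannot remain bounded. After passing to a subsequence we may assume that $\seq{k_n}$ is strictly increasing.

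\emph{Step 2: rescaling to a bounded sequence.} Set $t_n := 1/(\norm{x_n}_X + R + 1) \in (0,1]$ and $z_n := (1-t_n)x_{0,k_n} + t_n x_n$. Then $\norm{z_n}_X \le R + t_n\norm{x_n}_X \le R+1$, so $\seq{z_n}$ is bounded. By convexity,
\begin{equation*}
	f_{k_n}\conjugate(z_n) \le (1-t_n)M - n\,t_n(\norm{x_n}_X+1) \le M - \frac{n}{R+1} \to -\infty ,
\end{equation*}
where we used $t_n(\norm{x_n}_X+1) \ge 1/(R+1)$.

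\emph{Step 3: compactness and the bound property.} Since $Y$ is separable, bounded sets in $X = Y\dualspace$ are weak-$\star$ sequentially compact. If instead $Y$ is reflexive, then $X$ is reflexive and we use Eberlein--Šmulian, weak and weak-$\star$ convergence coinciding. Either way a further subsequence satisfies $z_{n_m} \weaklystar z$. Applying \cref{lem:aux_lower_bound} (weak-$\star$ version) to the subsequence $k_{n_m}$, where the points at all other indices are filled by $z$, gives
\begin{equation*}
	f\conjugate(z) \le \liminf_{m\to\infty} f_{k_{n_m}}\conjugate(z_{n_m}) = -\infty .
\end{equation*}
This contradicts $f\conjugate \in \Gamma_0(X)$, which is proper because $f \in \Gamma_0(Y)$.

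The only delicate point is Step 2: the points $x_n$ themselves may be unbounded, so compactness cannot be applied to them directly. The convex combination with the uniformly bounded anchors $x_{0,k_n}$ is exactly what keeps the iterates bounded while still driving the function values to $-\infty$.
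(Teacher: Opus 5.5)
Your proof is correct and follows the paper's argument essentially step for step. Both arguments argue by contradiction along strictly increasing violating indices. Both take a convex combination with the bounded anchors $x_{0,k}$ to obtain a bounded sequence on which the values tend to $-\infty$, then use weak-$\star$ sequential compactness and \cref{lem:aux_lower_bound} to contradict the properness of $f\conjugate$. The only difference is cosmetic: you use the weight $1/(\norm{x_n}_X+R+1)$, whereas the paper uses $1/(\norm{x_{k_n}}_X+1)$.
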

\begin{proof}
	We argue by contradiction.
	Consequently, for every $n \in \N$, the condition
	$f_k\conjugate \ge -n (\norm{\cdot}_X + 1)$
	is violated for some $k \in \N$.
	However,
	since for every $k$, there exists $C_k \ge 0$
	with
	$f_k\conjugate \ge -C_k (\norm{\cdot}_X + 1)$
	(since $f_k\conjugate \in \Gamma_0(X)$ has a continuous and affine lower bound),
	the condition
	$f_k\conjugate \ge -n (\norm{\cdot}_X + 1)$
	must be violated for infinitely many $k \in \N$.
	Consequently, we find a strictly increasing sequence $\seq{k_n} \subset \N$
	such that
	$f_{k_n}\conjugate \ge -n (\norm{\cdot}_X + 1)$
	is violated for all $n \in \N$.
	In particular, we can choose
	$x_{k_n} \in X$ such that
	$f_{k_n}\conjugate(x_{k_n}) \le -n (\norm{x_{k_n}}_X + 1)$.

	Now, let $\seq{x_{0,k}} \subset X$ be a sequence with
	$\norm{x_{0,k}}_X \le C$ and $f_k\conjugate(x_{0,k}) \le C$
	for some $C \ge 0$.
	We define
	\begin{equation*}
		\lambda_n := \frac{1}{\norm{x_{k_n}}_X + 1} \in (0,1)
		\qquad\text{and}\qquad
		\bar x_n := (1-\lambda_n) x_{0,k_n} + \lambda_n x_{k_n}
		.
	\end{equation*}
	Consequently,
	\begin{align*}
		f_{k_n}\conjugate(\bar x_n)
		&\le
		(1-\lambda_n) f_{k_n}\conjugate( x_{0,k_n} )
		+
		\lambda_n f_{k_n}\conjugate( x_{k_n} )
		\le
		C - n
		\\
		\norm{ \bar x_n }_X
		&\le
		(1-\lambda_n) \norm{ x_{0,k_n} }_X
		+
		\lambda_n \norm{ x_{k_n} }_X
		\le
		C + 1
		.
	\end{align*}
	Since $Y$ is separable or reflexive,
	we can extract a subsequence $\bar x_{n_l} \weaklystar \bar x$.
	Together with \cref{lem:aux_lower_bound}, we infer
	\begin{equation*}
		f\conjugate(\bar x)
		\le
		\liminf_{l \to \infty} f_{k_{n_l}}\conjugate(\bar x_{n_l})
		\le
		\liminf_{l \to \infty} \braces*{ C - n_l }
		=
		-\infty
	\end{equation*}
	which contradicts $f\conjugate \in \Gamma_0(X)$.
\end{proof}
A similar argument can be used for the primal sequence $\seq{f_k}$.

\begin{lemma}
	\label{lem:bound_on_fk}
	Assume that
	\eqref{RSfs} and \eqref{BPfs} are satisfied.
	Then, there exists $C \ge 0$
	such that
	$f_k \ge -C (\norm{\cdot}_Y + 1)$
	for all $k \in \N$.
\end{lemma}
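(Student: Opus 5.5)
The plan is to argue by contradiction, as in the proof of \cref{lem:bound_on_fk_conjugate}. The difference is that there is no compactness available in $Y$. The bound property \eqref{BPfs} only applies to strongly convergent sequences, so the convex combinations must be chosen so that they converge strongly.

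\emph{Step 1: uniform properness of $\seq{f_k}$.} Since $f$ is proper, there is $y_0 \in Y$ with $f(y_0) < \infty$. By \eqref{RSfs} there is a sequence $y_{0,k} \to y_0$ with $\limsup_{k\to\infty} f_k(y_{0,k}) \le f(y_0) < \infty$. Hence there is $K$ such that $f_k(y_{0,k}) \le f(y_0) + 1$ for all $k \ge K$. For the finitely many $k < K$, I redefine $y_{0,k}$ to be any point of $\dom f_k$, which is nonempty since $f_k$ is proper. This changes neither the convergence nor the properties used below. Thus there is $C \ge 0$ with $\norm{y_{0,k}}_Y \le C$ and $f_k(y_{0,k}) \le C$ for all $k$, and still $y_{0,k} \to y_0$.

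\emph{Step 2: extraction of a violating subsequence.} Suppose the claim fails. Each $f_k \in \Gamma_0(Y)$ has a continuous affine minorant, so for each fixed $k$ there is $C_k$ with $f_k \ge -C_k(\norm{\cdot}_Y + 1)$. Consequently, for every $n$ the bound $f_k \ge -n(\norm{\cdot}_Y+1)$ must fail for infinitely many $k$. This lets us choose a strictly increasing sequence $\seq{k_n}$ and points $y_{k_n}$ with
\begin{equation*}
	f_{k_n}(y_{k_n}) \le -n(\norm{y_{k_n}}_Y + 1).
\end{equation*}

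\emph{Step 3: strongly convergent convex combinations.} This is the key point. The choice $\lambda_n = 1/(\norm{y_{k_n}}_Y+1)$ from \cref{lem:bound_on_fk_conjugate} would only give bounded combinations, which is useless without compactness. Instead I take the damped weights
\begin{equation*}
	\lambda_n := \frac{1}{\sqrt n\,(\norm{y_{k_n}}_Y + 1)} \in (0,1]
	\qquad\text{and}\qquad
	\bar y_n := (1-\lambda_n) y_{0,k_n} + \lambda_n y_{k_n}.
\end{equation*}
Then $\norm{\bar y_n - y_{0,k_n}}_Y = \lambda_n \norm{y_{k_n} - y_{0,k_n}}_Y \le \lambda_n(\norm{y_{k_n}}_Y + C) \le (1+C)/\sqrt n \to 0$. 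Since $y_{0,k_n} \to y_0$, it follows that $\bar y_n \to y_0$ strongly. By convexity,
\begin{equation*}
	f_{k_n}(\bar y_n) \le (1-\lambda_n) C + \lambda_n f_{k_n}(y_{k_n}) \le C - \sqrt n.
\end{equation*}

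\emph{Step 4: conclusion.} I apply \cref{lem:aux_lower_bound} along the subsequence $k_n$, with $\bar y_n$ placed at index $k_n$, together with \eqref{BPfs}. This yields $f(y_0) \le \liminf_{n\to\infty} (C - \sqrt n) = -\infty$, which contradicts $f \in \Gamma_0(Y)$. The only delicate step is Step 3: balancing the weights so that the function values still diverge to $-\infty$ while the combination converges strongly. The factor $1/\sqrt n$ achieves both.
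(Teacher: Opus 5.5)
Your proof is correct and follows the paper's argument essentially step by step: the same contradiction setup with a strictly increasing violating subsequence $\seq{k_n}$, the same damped weights $\lambda_n = 1/(\sqrt{n}\,(\norm{y_{k_n}}_Y+1))$ that make $\bar y_n \to y_0$ strongly while $f_{k_n}(\bar y_n) \le C - \sqrt{n}$, and the same appeal to \cref{lem:aux_lower_bound}. Your Step~1 also fixes two small points the paper glosses over: it ensures $f_k(y_{0,k}) \le C$ for every $k$ rather than only for large $k$, and it correctly takes $\lambda_n \in (0,1]$ rather than $(0,1)$.
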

\begin{proof}
	We argue by contradiction.
	Consequently, for every $n \in \N$, the condition
	$f_k \ge -n (\norm{\cdot}_Y + 1)$
	is violated for some $k \in \N$.
	However,
	since for every $k$, there exists $C_k \ge 0$
	with
	$f_k \ge -C_k (\norm{\cdot}_Y + 1)$
	(since $f_k \in \Gamma_0(Y)$ has a continuous and affine lower bound),
	the condition
	$f_k \ge -n (\norm{\cdot}_Y + 1)$
	must be violated for infinitely many $k \in \N$.
	Consequently, we find a strictly increasing sequence $\seq{k_n} \subset \N$
	such that
	$f_{k_n} \ge -n (\norm{\cdot}_X + 1)$
	is violated for all $n \in \N$.
	In particular, we can choose
	$y_{k_n} \in Y$ such that
	$f_{k_n}(y_{k_n}) \le -n (\norm{y_{k_n}}_Y + 1)$.

	The properness of $f$ ensures the existence of $y_0 \in Y$
	with $f(y_0) < \infty$.
	Due to \eqref{RSfs}, we find a sequence
	$\seq{y_{0,k}} \subset Y$ with
	$y_{0,k} \to y_0$ and $f_k(y_{0,k}) \le C$
	for some $C \ge 0$.
	We define
	\begin{equation*}
		\lambda_n := \frac{1}{\sqrt{n} \parens*{ \norm{y_{k_n}}_Y + 1 }} \in (0,1)
		\qquad\text{and}\qquad
		\bar y_n := (1-\lambda_n) y_{0,k_n} + \lambda_n y_{k_n}
		.
	\end{equation*}
	Consequently,
	\begin{align*}
		f_{k_n}(\bar y_n)
		&\le
		(1-\lambda_n) f_{k_n}( y_{0,k_n} )
		+
		\lambda_n f_{k_n}( y_{k_n} )
		\le
		C
		-
		\frac{n \parens*{ \norm{y_{k_n}}_Y + 1 }}{\sqrt{n} \parens*{ \norm{y_{k_n}}_Y + 1 }}
		\to
		-\infty
		\\
		\norm{ \bar y_n - y_0 }_Y
		&\le
		(1-\lambda_n) \norm{ y_{0,k_n} - y_0 }_Y
		+
		\lambda_n \norm{ y_{k_n} - y_0 }_Y
		\\&
		\le
		\norm{ y_{0,k_n} - y_0 }_Y
		+
		\frac{\norm{y_{k_n}}_Y + \norm{y_{0,k_n}}_Y}{\sqrt{n} \parens*{ \norm{y_{k_n}}_Y + 1 }}
		\to
		0
		.
	\end{align*}
	Consequently, $\bar y_n \to y_0$.
	Together with \cref{lem:aux_lower_bound}, we infer
	\begin{equation*}
		-\infty
		<
		f(y_0)
		\le
		\liminf_{l \to \infty} f_{k_{n_l}}(\bar y_{n_l})
		\le
		\liminf_{l \to \infty} \braces*{ C - \sqrt{n_l} }
		=
		-\infty
		,
	\end{equation*}
	which is a contradiction.
\end{proof}

\section{Graphical convergence of the subdifferentials}
\label{sec:subdifferentials}

We want to study whether the convergence properties
investigated in the last section
are related to convergence properties of the subdifferentials $\partial f_k$.
Similar to the previous section,
the results in this section
can already be found in \cite[Theorem~5.7]{CorreaPerezArosSantander2025},
which shows that
(under mild conditions)
the graphical convergence of the subdifferentials $\partial f_k$
(together with a normalization condition)
is equivalent to
the $\Gamma$-convergence of $f_k$
and to
the $\Gamma$-convergence of $f_k\conjugate$
(everything in appropriate topologies).

In what follows,
we review these implications
and give simple proofs.
In particular, our proof of \cref{prop:RSpf_implies_all}
only relies on the Rockafellar integration formula,
while
the corresponding proof in
\cite[Section~10.2]{CorreaPerezArosSantander2025}
introduces the Fenchel subdifferential
and uses associated integration results from
\cite{CombariThibault1998}.

For the convergence of the subdifferentials, we investigate
the following two conditions.
In these conditions,
we abuse the notation and write
``$\seq{(y_k, x_k)} \subset \seq{\graph\partial f_k}$''
to denote a sequence of pairs $(y_k, x_k) \in Y \times X$
satisfying $(y_k, x_k) \in \graph \partial f_k$ for all $k \in \N$.
The first condition
\begin{equation*}
	\tag{\BPpf}
	\label{BPpf}
	\begin{aligned}
		&
		\forall \seq{(y_k, x_k)} \subset \seq{\graph\partial f_k} :
		\forall (y,x) \in Y \times X :
		\\ &
		\qquad
		\parens*{
			y_k \to y
			\; \land \;
			x_k \weaklystar x
		}
		\quad\Rightarrow\quad
		(y,x) \in \graph \partial f
	\end{aligned}
\end{equation*}
says that limit points of subdifferentials
belong to the subdifferential of $f$,
whereas the second condition
\begin{equation*}
	\tag{\RSpf}
	\label{RSpf}
	\forall (y,x) \in \graph\partial f :
	\exists \seq{(y_k, x_k)} \subset \seq{\graph\partial f_k} :
	y_k \to y
	\; \land \;
	x_k \weaklystar x
\end{equation*}
provides a recovery sequence.
Further, we need the normalization condition
\begin{equation*}
	\tag{\NC}
	\label{NC}
	\begin{aligned}
		&
		\exists \seq{ (y_{0,k}, x_{0,k}) } \subset \seq{\graph\partial f_k} :
		\exists (y_0, x_0) \in \graph\partial f :
		\\ &
		\qquad
		y_{0,k} \to y_0
		\; \land \;
		x_{0,k} \weaklystar x_0
		\; \land \;
		f_k(y_{0,k}) \to f(y_0)
	\end{aligned}
\end{equation*}
since the subdifferential determines a convex function
only up to a constant, see \cref{thm:rockafellar_integration}.

Throughout this section, we utilize the same standing assumption as
in the previous section.
\begin{assumption}[Standing assumption]
	\label{asm:standing_subdifferentials}
	We have
	$\seq{f_k} \subset \Gamma_0(Y)$ and $f \in \Gamma_0(Y)$.
\end{assumption}

The proof of the first implication is very simple.
\begin{lemma}
	\label{lem:BP_subdifferential}
	Let \eqref{BPfs} and \eqref{BPfcw} be satisfied.
	Then, \eqref{BPpf} holds.
\end{lemma}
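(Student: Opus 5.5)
We take a sequence $\seq{(y_k,x_k)} \subset \seq{\graph\partial f_k}$ with $y_k \to y$ in $Y$ and $x_k \weaklystar x$ in $X$. We then show $f(y) + f\conjugate(x) \le \dual{x}{y}_Y$. Together with the Fenchel--Young inequality $f(y) + f\conjugate(x) \ge \dual{x}{y}_Y$, this gives equality, i.e., $x \in \partial f(y)$.

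The first step uses the Fenchel--Young identity for each $k$: since $x_k \in \partial f_k(y_k)$, we have
\begin{equation*}
	f_k(y_k) + f_k\conjugate(x_k) = \dual{x_k}{y_k}_Y .
\end{equation*}
The right-hand side converges to $\dual{x}{y}_Y$. Indeed, $x_k \weaklystar x$ implies that $\seq{x_k}$ is bounded by the uniform boundedness principle, and then
\begin{equation*}
	\dual{x_k}{y_k}_Y - \dual{x}{y}_Y = \dual{x_k}{y_k - y}_Y + \dual{x_k - x}{y}_Y \to 0 .
\end{equation*}

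The second step applies the two bound properties. From \eqref{BPfs} we get $f(y) \le \liminf_{k\to\infty} f_k(y_k)$, and from \eqref{BPfcw} we get $f\conjugate(x) \le \liminf_{k\to\infty} f_k\conjugate(x_k)$. Adding these uses superadditivity of the $\liminf$:
\begin{equation*}
	f(y) + f\conjugate(x) \le \liminf_{k\to\infty} \parens*{ f_k(y_k) + f_k\conjugate(x_k) } = \dual{x}{y}_Y .
\end{equation*}

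The only delicate point is whether the addition of the $\liminf$s is legitimate, i.e., whether an expression $\infty - \infty$ can occur. It cannot. Both liminfs are bounded below by $f(y)$ and $f\conjugate(x)$, and these values lie in $(-\infty,\infty]$ because $f, f\conjugate$ are proper. Moreover, $f_k(y_k)$ and $f_k\conjugate(x_k)$ are finite for every $k$ since $(y_k,x_k)\in\graph\partial f_k$. Hence the inequality above shows that $f(y)$ and $f\conjugate(x)$ are finite with sum at most $\dual{x}{y}_Y$. The Fenchel--Young inequality then yields equality, so $(y,x)\in\graph\partial f$, which is \eqref{BPpf}.
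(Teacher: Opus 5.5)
Your proof is correct and follows the paper's argument: add \eqref{BPfs} and \eqref{BPfcw}, use the Fenchel--Young identity $f_k(y_k)+f_k\conjugate(x_k)=\dual{x_k}{y_k}_Y$ together with $\dual{x_k}{y_k}_Y\to\dual{x}{y}_Y$, and conclude via the Fenchel--Young inequality. You additionally make explicit two points the paper leaves implicit: the boundedness of $\seq{x_k}$ via the uniform boundedness principle, and why no $\infty-\infty$ can occur.
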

\begin{proof}
	Let a sequence $\seq{(y_k, x_k)}$ as in \eqref{BPpf} be given.
	The assumptions together with the Fenchel--Young identity
	imply
	\begin{align*}
		f(y) + f\conjugate(x)
		&\le
		\liminf_{k \to \infty} f_k(y_k) + \liminf_{k \to \infty} f_k\conjugate(x_k)
		\\
		&\le
		\liminf_{k \to \infty} \parens*{ f_k(y_k) + f_k\conjugate(x_k) }
		=
		\liminf_{k \to \infty} \parens*{ \dual{x_k}{y_k}_Y }
		=
		\dual{x}{y}_Y
		.
	\end{align*}
	Owing again to Fenchel--Young, this implies $(y,x) \in \graph\partial f$.
\end{proof}

The existence of a recovery sequence as in \eqref{RSpf}
is shown in \cite[Section~10.1]{CorreaPerezArosSantander2025}.
For convenience, we restate the proof.
\begin{proposition}
	\label{prop:RS_subdifferential}
	Suppose that \eqref{RSfs} and \eqref{RSfcw} are satisfied.
	Then, \eqref{RSpf} and \eqref{NC} hold.
\end{proposition}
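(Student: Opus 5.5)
Given $(y,x) \in \graph\partial f$, I will build recovery sequences for both $f$ and $f\conjugate$ and then move them onto the graphs of $\partial f_k$ via Ekeland's variational principle (Brøndsted--Rockafellar). By \eqref{RSfs} there is $\tilde y_k \to y$ with $\limsup_{k} f_k(\tilde y_k) \le f(y)$. By \eqref{RSfcw} there is $\tilde x_k \weaklystar x$ with $\limsup_{k} f_k\conjugate(\tilde x_k) \le f\conjugate(x)$. By the uniform boundedness principle, $\seq{\tilde x_k}$ is bounded in $X$.

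First I control the Fenchel--Young gap
\begin{equation*}
	\varepsilon_k := f_k(\tilde y_k) + f_k\conjugate(\tilde x_k) - \dual{\tilde x_k}{\tilde y_k}_Y \ge 0 .
\end{equation*}
Since $\tilde y_k \to y$ strongly, $\tilde x_k \weaklystar x$, and $\dual{\tilde x_k}{\tilde y_k}_Y \to \dual{x}{y}_Y$, we obtain
\begin{equation*}
	\limsup_{k} \varepsilon_k \le f(y) + f\conjugate(x) - \dual{x}{y}_Y = 0 .
\end{equation*}
Hence $\varepsilon_k \to 0$, that is, $\tilde x_k \in \partial_{\varepsilon_k} f_k(\tilde y_k)$. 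Moreover, \eqref{RSfs} implies \eqref{BPfcw} and \eqref{RSfcw} implies \eqref{BPfs} by \cref{lem:lower_bounds}. Combining these lower bounds with the limsup bounds gives $f_k(\tilde y_k) \to f(y)$ and $f_k\conjugate(\tilde x_k) \to f\conjugate(x)$, which is needed for \eqref{NC}.

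Next I apply the Brøndsted--Rockafellar theorem with parameter $\sqrt{\varepsilon_k}$ (if $\varepsilon_k = 0$, take $y_k = \tilde y_k$, $x_k = \tilde x_k$). This yields $(y_k, x_k) \in \graph\partial f_k$ with $\norm{y_k - \tilde y_k}_Y \le \sqrt{\varepsilon_k}$ and $\norm{x_k - \tilde x_k}_X \le \sqrt{\varepsilon_k}$. It is this version of Ekeland's principle, applied to $f_k - \tilde x_k$ on $Y$, that produces a subgradient in $X = Y\dualspace$, so no reflexivity is needed. Consequently $y_k \to y$ strongly and $x_k - \tilde x_k \to 0$ in norm, hence $x_k \weaklystar x$. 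This proves \eqref{RSpf}.

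For \eqref{NC} I take any $(y_0, x_0) \in \graph\partial f$, which is nonempty because $f \in \Gamma_0(Y)$; this is the Brøndsted--Rockafellar density of $\dom\partial f$. Let $(y_{0,k}, x_{0,k})$ be the sequence just constructed. It remains to show $f_k(y_{0,k}) \to f(y_0)$. By convexity, $x_{0,k} \in \partial f_k(y_{0,k})$ gives $f_k(\tilde y_k) \ge f_k(y_{0,k}) + \dual{x_{0,k}}{\tilde y_k - y_{0,k}}_Y$. The last term tends to zero because $x_{0,k}$ is bounded and $\tilde y_k - y_{0,k} \to 0$, so $\limsup_{k} f_k(y_{0,k}) \le f(y_0)$. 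The reverse inequality $\liminf_{k} f_k(y_{0,k}) \ge f(y_0)$ follows from \eqref{BPfs}, which holds by \cref{lem:lower_bounds}.

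The main obstacle is the approximation step, which requires a Brøndsted--Rockafellar-type result in a nonreflexive space with both distances controlled by $\sqrt{\varepsilon_k}$. The remaining steps are routine limit arguments.
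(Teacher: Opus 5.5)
Your proof is correct and follows the paper's argument. It uses the recovery sequences from \eqref{RSfs} and \eqref{RSfcw}, the vanishing Fenchel--Young gap $\varepsilon_k$, the Br{\o}ndsted--Rockafellar lemma with parameter $\sqrt{\varepsilon_k}$, and \eqref{BPfs} via \cref{lem:lower_bounds} for the lower bound in \eqref{NC}. The one deviation is the upper bound in \eqref{NC}: you use the subgradient inequality at $(y_{0,k},x_{0,k})$ together with the boundedness of $\seq{x_{0,k}}$, whereas the paper uses the extra estimate $f_k(y_k) \le f_k(\tilde y_k) + \dual{\tilde x_k}{y_k - \tilde y_k}_Y$ taken from the proof of the Br{\o}ndsted--Rockafellar lemma, so your version needs only the lemma's statement.
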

\begin{proof}
	Let $(y,x) \in \graph\partial f$ be given, i.e.,
	\begin{equation*}
		f(y) + f\conjugate(x) = \dual{x}{y}_Y.
	\end{equation*}
	By the assumptions, we infer the existence of sequences
	$\seq{\tilde y_k} \subset Y$ and $\seq{\tilde x_k} \subset X$
	with
	$\tilde y_k \to y$, $\tilde x_k \weaklystar x$,
	$\limsup_{k \to \infty} f_k(\tilde y_k) \le f(y)$,
	and
	$\limsup_{k \to \infty} f_k\conjugate(\tilde x_k) \le f\conjugate(x)$.
	We define
	\begin{equation*}
		\varepsilon_k := f_k(\tilde y_k) + f_k\conjugate(\tilde x_k) - \dual{\tilde x_k}{\tilde y_k}_Y \ge 0
	\end{equation*}
	and observe
	\begin{align*}
		\limsup_{k \to \infty} \varepsilon_k
		&\le
		\limsup_{k \to \infty} f_k(\tilde y_k)
		+
		\limsup_{k \to \infty} f_k\conjugate(\tilde x_k)
		-
		\lim_{k \to \infty} \dual{\tilde x_k}{\tilde y_k}_Y
		\\&
		\le
		f(y) + f\conjugate(x) - \dual{x}{y}_Y
		=
		0,
	\end{align*}
	i.e., $\varepsilon_k \to 0$.
	Now, (the proof of) \cite[Lemma]{BrondstedRockafellar1965}
	provides us with
	$(y_k, x_k) \in \graph \partial f_k$
	such that
	$\norm{ y_k - \tilde y_k }_Y \le \sqrt{\varepsilon_k}$,
	$\norm{ x_k - \tilde x_k }_X \le \sqrt{\varepsilon_k}$,
	as well as
	$f_k(y_k) \le f_k(\tilde y_k) + \dual{\tilde x_k}{y_k - \tilde y_k}_Y$.
	Consequently,
	$y_k \to y$ and $x_k \weaklystar x$.
	This finishes the proof of \eqref{RSpf}.

	In order to show \eqref{NC},
	we apply the previous argument to an arbitrary
	$(y_0, x_0) \in \graph\partial f$ (which exists)
	to obtain
	a sequence
	$\seq{(y_{0,k}, x_{0,k})} \subset \seq{\graph\partial f_k}$
	such that
	$y_{0,k} \to y_0$ in $Y$,
	$x_{0,k} \weaklystar x_0$ in $X$.
	Additionally, we obtain
	\begin{equation*}
		f_k(y_{0,k})
		\le
		f_k(\tilde y_{0,k}) + \dual{\tilde x_{0,k}}{y_{0,k} - \tilde y_{0,k}}_Y
		,
	\end{equation*}
	where
	$\norm{\tilde y_{0,k} - y_{0,k}}_Y \le \sqrt{\varepsilon_k}$,
	$\norm{\tilde x_{0,k} - x_{0,k}}_X \le \sqrt{\varepsilon_k}$,
	and
	$\limsup_{k \to \infty} f_k(\tilde y_{0,k}) \le f(y_0)$.
	Consequently,
	\begin{equation*}
		\limsup_{k \to \infty}
		f_k(y_{0,k})
		\le
		\limsup_{k \to \infty}
		f_k(\tilde y_{0,k})
		+
		\lim_{k \to \infty}
		\dual{\tilde x_{0,k}}{y_{0,k} - \tilde y_{0,k}}_Y
		\le
		f(y_0) + 0
		.
	\end{equation*}
	Finally,
	since \eqref{RSfcw} implies \eqref{BPfs}, see \cref{lem:lower_bounds},
	we also have
	$f(y_0) \le \liminf_{k \to \infty} f_k(y_{0,k})$.
	Consequently, $f_k(y_{0,k}) \to f(y_0)$
	and this finishes the proof of \eqref{NC}.
\end{proof}

Next, we address the converse implications.
In order to obtain convergence properties of $f_k$
from those of $\partial f_k$,
the Rockafellar integration formula from \cref{thm:rockafellar_integration}
is crucial.

\begin{proposition}
	\label{prop:RSpf_implies_all}
	Assume that \eqref{RSpf} and \eqref{NC} hold.
	Then, \eqref{BPfs} and \eqref{RSfs} are satisfied.
\end{proposition}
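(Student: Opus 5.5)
Both properties follow from the Rockafellar integration formula (\cref{thm:rockafellar_integration}), anchored at the normalization points supplied by \eqref{NC}. Fix $\seq{(y_{0,k},x_{0,k})}$ and $(y_0,x_0)$ as in \eqref{NC}. For every $k$ and every $z\in Y$ the formula gives
\begin{equation*}
	f_k(z) \ge f_k(y_{0,k}) + \sum_{i=0}^{n} \dual{x_i^k}{y_{i+1}^k - y_i^k}_Y
\end{equation*}
for any finite chain $(y_i^k,x_i^k)\in\graph\partial f_k$, $i=1,\ldots,n$, with $(y_0^k,x_0^k):=(y_{0,k},x_{0,k})$ and $y_{n+1}^k:=z$. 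Equality holds after taking the supremum over all chains. The same representation holds for $f$ with base point $(y_0,x_0)$.

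\emph{Bound property \eqref{BPfs}.} Let $z_k\to z$ and fix an arbitrary chain $(y_i,x_i)\in\graph\partial f$, $i=1,\ldots,n$, with $y_{n+1}:=z$. By \eqref{RSpf}, each $(y_i,x_i)$ has a recovery sequence $(y_i^k,x_i^k)\in\graph\partial f_k$ with $y_i^k\to y_i$ and $x_i^k\weaklystar x_i$. Taking $y_{n+1}^k:=z_k$, the lower estimate above yields
\begin{equation*}
	\liminf_{k\to\infty} f_k(z_k) \ge \lim_{k\to\infty} f_k(y_{0,k}) + \sum_{i=0}^n \dual{x_i}{y_{i+1}-y_i}_Y = f(y_0) + \sum_{i=0}^n \dual{x_i}{y_{i+1}-y_i}_Y .
\end{equation*}
The limit of each pairing exists because $x_i^k\weaklystar x_i$ is bounded (uniform boundedness) and $y^k_{i+1}-y^k_i\to y_{i+1}-y_i$ strongly; for $i=0$ this uses $x_{0,k}\weaklystar x_0$ and $y_{0,k}\to y_0$ from \eqref{NC}. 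Taking the supremum over chains and applying the integration formula for $f$ gives $\liminf_k f_k(z_k)\ge f(z)$.

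\emph{Recovery sequence \eqref{RSfs}.} This is the harder direction, since the integration formula controls $f_k$ only from below through chains; an upper bound must come from another source. The plan is to first build recovery sequences at points of $\dom\partial f$ and then extend to all of $\dom f$ by density and a diagonal argument.

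\emph{Step 1: points of $\dom\partial f$.} Take $(y,x)\in\graph\partial f$ with recovery sequence $(y_k,x_k)\in\graph\partial f_k$. The subgradient inequality for $f_k$ at $y_k$, tested at $y_{0,k}$, gives
\begin{equation*}
	f_k(y_k) \le f_k(y_{0,k}) + \dual{x_k}{y_k-y_{0,k}}_Y \to f(y_0) + \dual{x}{y-y_0}_Y .
\end{equation*}
This bound is not yet $f(y)$, so the base points must be changed. Instead, integrate from $y_0$ to $y$ using the chain $\{(y,x)\}$ in reverse, i.e.\ apply the same subgradient inequality to $f_k$ at $y_{0,k}$ tested at $y_k$, and conversely use $(y_0,x_0)$-based chains approximating the supremum in the integration formula for $f(y)$ from the other side. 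Equivalently, one can apply Rockafellar's formula to $f$ with base point $(y,x)$ and target $y_0$, combined with $f_k(y_{0,k})\to f(y_0)$, to obtain $\limsup_k f_k(y_k) \le f(y)$. This limsup bound is the main obstacle: the integration formula naturally yields lower bounds, and turning it into an upper bound requires exploiting the symmetric roles of base point and target, together with the already-proved \eqref{BPfs}.

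\emph{Step 2: extension to $\dom f$.} For general $y\in\dom f$, use the Br{\o}ndsted--Rockafellar theorem to approximate $y$ by points $y^l\in\dom\partial f$ with $y^l\to y$ and $f(y^l)\to f(y)$. Applying \cref{lem:technical_lemma_from_attouch} to $\alpha_{l,k}:=f_k(y^l_k)+\norm{y^l_k-y}_Y$ then produces a diagonal recovery sequence.
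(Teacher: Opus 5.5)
Your overall route is the paper's: \eqref{BPfs} by passing $(y_0,x_0)$-based chains to the limit with \eqref{RSpf} and \eqref{NC}, then \eqref{RSfs} first on $\dom\partial f$ by integrating from the base point $(y,x)$ to the target $y_0$, and finally Br{\o}ndsted--Rockafellar density plus \cref{lem:technical_lemma_from_attouch}. The \eqref{BPfs} part is complete and correct. Two steps in \eqref{RSfs}, however, need repair.

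\textbf{Step~1 is named but not carried out.} You name the right device (base point $(y,x)$, target $y_0$) but never execute it. The alternatives you list before it, namely the subgradient inequality at $y_{0,k}$ tested at $y_k$ and $(y_0,x_0)$-based chains, only bound $f_k(y_k)$ from below. The argument closes in a few lines and does not need \eqref{BPfs}. For $\varepsilon>0$, choose a chain $(y_i,x_i)\in\graph\partial f$, $i=1,\dots,n$, with $y_{n+1}:=y_0$ and $f(y_0)\le f(y)+\varepsilon+S$, where $S:=\dual{x}{y_1-y}_Y+\sum_{i=1}^n\dual{x_i}{y_{i+1}-y_i}_Y$. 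Take recovery sequences for the chain and set $y_{n+1,k}:=y_{0,k}$. The integration formula for $f_k$ based at $(y_k,x_k)$ then gives $f_k(y_{0,k})\ge f_k(y_k)+S_k$ with $S_k\to S$. Using $f_k(y_{0,k})\to f(y_0)$ from \eqref{NC}, this yields $\limsup_k f_k(y_k)\le f(y_0)-S\le f(y)+\varepsilon$. The key point is that the formula bounds the value at the \emph{base point} from above.

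\textbf{Step~2 has a genuine gap.} With $\alpha_{l,k}:=f_k(y^l_k)+\norm{y^l_k-y}_Y$, \cref{lem:technical_lemma_from_attouch} only gives $\limsup_k\bigl(f_k(y_k)+\norm{y_k-y}_Y\bigr)\le f(y)$ for the diagonal sequence. This does not force $y_k\to y$: $f_k(y_k)$ may sit well below $f(y)$ while $\norm{y_k-y}_Y$ stays away from zero. Nothing available gives a lower bound $f_k(y_k)\ge f(y)-o(1)$ along a sequence not yet known to converge, since \eqref{BPfs} applies only to sequences converging to $y$. The paper instead uses $\alpha_{l,k}:=\abs{f_k(y^l_k)-f(y)}+\norm{y^l_k-y}_Y$. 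This requires the two-sided limit $f_k(y^l_k)\to f(y^l)$, i.e.\ Step~1 combined with \eqref{BPfs}; this, not Step~1, is where \eqref{BPfs} actually enters. Alternatively, $\alpha_{l,k}:=\max\{f_k(y^l_k)-f(y),\norm{y^l_k-y}_Y\}$ works with Step~1 alone, since $\limsup$ commutes with finite maxima and the resulting bound $\limsup_k\alpha_{l_k,k}\le 0$ controls both quantities separately.
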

\begin{proof}
	In order to prove \eqref{BPfs},
	let $\bar y \in Y$ and $\seq{\bar y_k} \subset Y$ with $\bar y_k \to \bar y$ be given.
	For an arbitrary $M \in \R$ satisfying $M < f(\bar y)$,
	we can use \cref{thm:rockafellar_integration}
	to get the existence of
	$n \in \N$ and $(y_i, x_i) \in \graph\partial f$, $i = 1,\ldots,n$
	such that
	\begin{equation*}
		M
		\le
		f(y_0)
		+ 
		\sum_{i = 0}^{n} \dual{x_i}{y_{i+1} - y_i}_Y
		=:
		\tilde M
		,
	\end{equation*}
	where
	$y_{n+1} := \bar y$
	and
	$(y_0, x_0) \in \graph\partial f$
	is given by \eqref{NC}.
	According to \eqref{RSpf},
	for each $i = 1,\ldots,n$ we find a sequence
	$\seq{(y_{i,k}, x_{i,k})}_k \subset \seq{\graph\partial f_k}$
	such that
	$y_{i,k} \to y_i$ and $x_{i,k} \weaklystar x_i$.
	For $(y_0, x_0)$, we use the sequence given by \eqref{NC}.
	Using $y_{n+1,k} := \bar y_k$
	we get
	\begin{align*}
		M
		&\le
		\tilde M
		=
		\lim_{k \to \infty} \braces*{
			f(y_{0,k})
			+ 
			\sum_{i = 0}^{n} \dual{x_{i,k}}{y_{i+1,k} - y_{i,k}}_Y
		}
		\\&
		\le
		\liminf_{k \to \infty} f_k(\bar y_k)
		,
	\end{align*}
	where the last inequality uses again \cref{thm:rockafellar_integration}.
	Since $M < f(\bar y)$ was arbitrary, this implies the desired
	\begin{equation*}
		f(\bar y)
		\le
		\liminf_{k \to \infty} f_k(\bar y_k)
		.
	\end{equation*}

	Next, we prove \eqref{RSfs}.
	First, we check that we can find a recovery sequence for every
	$\bar y \in \dom\partial f$.
	We fix $\bar x \in \partial f(\bar y)$
	and denote by $\seq{(\bar y_k, \bar x_k)} \subset \seq{\graph\partial f_k}$
	the recovery sequence given by \eqref{RSpf}.
	Next, we apply again \cref{thm:rockafellar_integration}.
	This shows that for every $M \in \R$ with $f(\bar y) < M$
	we find $n \in \N$ and $(y_i, x_i) \in \graph\partial f$, $i = 1,\ldots,n$,
	such that
	\begin{equation*}
		f(y_0)
		\le
		M
		+
		\dual{\bar x}{y_1 - \bar y}_Y
		+
		\sum_{i = 1}^{n} \dual{x_i}{y_{i+1} - y_i}_Y
		,
	\end{equation*}
	where $y_{n+1} := y_0$.
	As in the first part of the proof, we apply \eqref{RSpf} and \eqref{NC}
	and set $y_{n+1,k} := y_{0,k}$
	to get
	\begin{align*}
		\lim_{k \to \infty} f_k(y_{0,k})
		&
		=
		f(y_0)
		\\&
		\le
		M
		+
		\lim_{k \to \infty} \braces*{
			\dual{\bar x_k}{y_{1,k} - \bar y_k}_Y
			+
			\sum_{i = 1}^{n} \dual{x_{i,k}}{y_{i+1,k} - y_{i,k}}_Y
		}
		.
	\end{align*}
	From \cref{thm:rockafellar_integration}, we find that
	$f_k(y_{0,k}) - f_k(\bar y_k)$
	is an upper bound for the term inside the braces.
	Consequently,
	\begin{equation*}
		\lim_{k \to \infty} f_k(y_{0,k})
		\le
		M
		+
		\lim_{k \to \infty} f_k(y_{0,k})
		+
		\liminf_{k \to \infty} \braces*{ -f_k(\bar y_k) }
		.
	\end{equation*}
	Since $M > f(\bar y)$ was arbitrary, this shows the desired
	\begin{equation*}
		\limsup_{k \to \infty} f_k(\bar y_k)
		\le
		f(\bar y).
	\end{equation*}
	For later reference, we mention that
	together with the first part of the proof
	we even get
	$f(\bar y) = \lim_{k \to \infty} f_k(\bar y_k)$.

	Now, let $\bar y \in Y$ we arbitrary.
	Of course, it is sufficient to consider the case $f(\bar y) < \infty$.
	Owing to \cite[Theorem~2]{BrondstedRockafellar1965},
	we find a sequence $\seq{\tilde y_l} \subset \dom\partial f$
	such that $\tilde y_l \to \bar y$ and $f(\tilde y_l) \to f(\bar y)$.
	According to the previous argument,
	we find sequences $\seq{\bar y_{l,k}}_k$ such that
	$\bar y_{l,k} \to \tilde y_l$ as $k \to \infty$
	and
	$\lim_{k \to \infty} f_k(\bar y_{l,k}) = f(\bar y_l)$.
	Consequently,
	\begin{equation*}
		\limsup_{l \to \infty}
		\limsup_{k \to \infty}
		\braces*{
			\abs{ f_k(\bar y_{l,k}) - f(\bar y) }
			+
			\norm{\bar y_{l,k} - \bar y}_Y
		}
		\le
		\lim_{l \to \infty}
		\braces*{
			\abs{ f(\bar y_l) - f(\bar y) }
			+
			\norm{\tilde y_l - \bar y}_Y
		}
		=
		0
		.
	\end{equation*}
	Now, \cref{lem:technical_lemma_from_attouch} yields $\seq{l_k} \subset \N$ with $l_k \to \infty$
	such that $\bar y_k := \bar y_{l_k, k}$ satisfies
	\begin{equation*}
		\limsup_{k \to \infty}
		\braces*{
			\abs{ f_k(\bar y_k) - f(\bar y) }
			+
			\norm{\bar y_k - \bar y}_Y
		}
		\le
		0
		.
	\end{equation*}
	In particular, this shows $\bar y_k \to \bar y$
	and $f_k(\bar y_k) \to f(\bar y)$.
\end{proof}
At this point,
we would like to mention that
the first part of the proof
can be modified to conclude
\eqref{BPfcw} directly,
but this also follows from the previous result together with \cref{lem:lower_bounds}.
When doing so, however, there is a crucial subtlety in nonreflexive spaces:
One needs an appropriate variant of \cref{thm:rockafellar_integration}
for conjugate functions which only involves $\partial f$
and not $\partial f\conjugate$,
since $\graph \partial f\conjugate \subset X \times X\dualspace = Y\dualspace \times Y\bidualspace$
might be bigger than
$\graph \partial f \subset Y \times Y\dualspace$.
We briefly mention that such a result can be obtained by
combining \cref{thm:rockafellar_integration} with
\cite[Proposition~3.4]{BartzBauschkeBorweinReichWang2007}.

Similarly, the first half of the second part of the proof
can be used to check that for every $\bar x \in X$
with $\partial f\conjugate(\bar x) \cap Y \ne \emptyset$,
there exists a sequence $\seq{\bar x_k}$
with $\bar x_k \weaklystar \bar x$
and
$f\conjugate(\bar x) = \lim_{k \to \infty} f_k\conjugate(\bar x_k)$.
However,
the diagonalization argument at the very end of the proof
cannot be transferred, since this would require a uniform bound
on $\seq{\norm{\bar x_{l,k}}_X}$ which cannot be obtained
without any additional assumption.

By collecting the obtained results,
we obtain the following theorem which is inspired by and very similar to
\cite[Theorem~5.7]{CorreaPerezArosSantander2025}.
For convenience, we repeat the standing assumptions in the formulation.
\begin{theorem}
	\label{thm:main_result_conjugate}
	Assume that the Banach space $Y$ is reflexive or separable.
	Further, let $\seq{f_k} \subset \Gamma_0(Y)$ and $f \in \Gamma_0(Y)$ be given.
	Consider the following statements.
	\begin{enumerate}
		\item
			\label{thm:main_result_conjugate:1}
			\eqref{BPfcw} and \eqref{RSfcw} hold.
		\item
			\label{thm:main_result_conjugate:2b}
			\eqref{RSpf} and \eqref{NC} hold.
		\item
			\label{thm:main_result_conjugate:2}
			\eqref{BPpf}, \eqref{RSpf} and \eqref{NC} hold.
		\item
			\label{thm:main_result_conjugate:3}
			\eqref{BPfs} and \eqref{RSfs} hold.
	\end{enumerate}
	Then,
	\ref{thm:main_result_conjugate:1}
	$\Rightarrow$
	\ref{thm:main_result_conjugate:2b}
	$\Leftrightarrow$
	\ref{thm:main_result_conjugate:2}
	$\Rightarrow$
	\ref{thm:main_result_conjugate:3}.
	Moreover, if $Y$ is separable and
	\itemref{prop:weakstar_dual_recovery:2}
	holds,
	then
	\ref{thm:main_result_conjugate:3}
	$\Rightarrow$
	\ref{thm:main_result_conjugate:1}.
\end{theorem}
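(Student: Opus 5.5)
We assemble the theorem from the results of \cref{sec:epi,sec:subdifferentials}, one implication at a time.

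For \ref{thm:main_result_conjugate:1} $\Rightarrow$ \ref{thm:main_result_conjugate:2b}, we need \eqref{RSfs} together with \eqref{RSfcw}, and then \cref{prop:RS_subdifferential} gives \eqref{RSpf} and \eqref{NC}. Only \eqref{RSfs} is not directly assumed. We obtain it from \eqref{BPfcw} via \cref{prop:strong_primal_recovery}, which requires $Y$ reflexive or separable and the uniform lower bounds on $f_k$ and $f_k\conjugate$. We verify these bounds as follows:
\begin{itemize}
	\item \eqref{RSfcw} applied at some $x_0 \in \dom f\conjugate$ yields $x_{0,k} \weaklystar x_0$ with $\limsup_k f_k\conjugate(x_{0,k}) \le f\conjugate(x_0) < \infty$. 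By the uniform boundedness principle $\seq{x_{0,k}}$ is bounded, so $\seq{f_k\conjugate}$ is uniformly proper after discarding finitely many indices.
	\item \cref{lem:uniformly_proper_bound} then gives $f_k \ge -C(\norm{\cdot}_Y + 1)$.
	\item \cref{lem:bound_on_fk_conjugate}, which uses \eqref{BPfcw}, gives $f_k\conjugate \ge -C(\norm{\cdot}_X + 1)$.
\end{itemize}
The discarded finitely many $k$ are harmless, since each single $f_k$ and $f_k\conjugate$ admits an affine minorant, so $C$ can simply be enlarged.

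For \ref{thm:main_result_conjugate:2b} $\Rightarrow$ \ref{thm:main_result_conjugate:2}, only \eqref{BPpf} is missing, and we obtain it through \cref{lem:BP_subdifferential}, which needs \eqref{BPfs} and \eqref{BPfcw}.
\begin{itemize}
	\item \cref{prop:RSpf_implies_all} gives \eqref{BPfs} and \eqref{RSfs}.
	\item \cref{lem:lower_bounds}\ref{lem:lower_bounds:1} turns \eqref{RSfs} into \eqref{BPfcw}.
\end{itemize}
The implication \ref{thm:main_result_conjugate:2} $\Rightarrow$ \ref{thm:main_result_conjugate:2b} is trivial. The implication \ref{thm:main_result_conjugate:2} $\Rightarrow$ \ref{thm:main_result_conjugate:3} is again exactly \cref{prop:RSpf_implies_all}.

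For the converse \ref{thm:main_result_conjugate:3} $\Rightarrow$ \ref{thm:main_result_conjugate:1} under separability and \itemref{prop:weakstar_dual_recovery:2}:
\begin{itemize}
	\item \cref{lem:lower_bounds}\ref{lem:lower_bounds:1} turns \eqref{RSfs} into \eqref{BPfcw}.
	\item \cref{prop:weakstar_dual_recovery} turns \eqref{BPfs} into \eqref{RSfcw}, provided its assumption \ref{prop:weakstar_dual_recovery:3} holds, namely $f_k \ge -C(\norm{\cdot}_Y + 1)$.
	\item That bound is exactly \cref{lem:bound_on_fk}, which applies since \eqref{RSfs} and \eqref{BPfs} both hold.
\end{itemize}

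The main obstacle is the first implication: justifying the uniform lower bounds needed in \cref{prop:strong_primal_recovery} from \ref{thm:main_result_conjugate:1} alone. The key step there is the uniform properness argument via a weak-$\star$ convergent, hence bounded, recovery sequence. Everything else is direct bookkeeping of the earlier lemmas.
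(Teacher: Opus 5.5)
Your proposal is correct and follows essentially the same route as the paper, chaining the same lemmas (\cref{lem:uniformly_proper_bound,lem:bound_on_fk_conjugate,prop:strong_primal_recovery,prop:RS_subdifferential,prop:RSpf_implies_all,lem:lower_bounds,lem:BP_subdifferential,lem:bound_on_fk,prop:weakstar_dual_recovery}) in the same way. There are two cosmetic differences. You prove \ref{thm:main_result_conjugate:1}$\Rightarrow$\ref{thm:main_result_conjugate:2b} directly, whereas the paper proves \ref{thm:main_result_conjugate:1}$\Rightarrow$\ref{thm:main_result_conjugate:2} via \cref{lem:BP_subdifferential}. You also justify the uniform properness of $\seq{f_k\conjugate}$ explicitly, through the uniform boundedness principle and the finitely-many-indices adjustment, which the paper simply calls clear.
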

\begin{proof}
	``\ref{thm:main_result_conjugate:1} $\Rightarrow$ \ref{thm:main_result_conjugate:2}'':
	Clearly, \eqref{RSfcw} implies that $\seq{f_k\conjugate}$ is uniformly proper.
	Consequently, \cref{lem:uniformly_proper_bound,lem:bound_on_fk_conjugate}
	imply the existence of $C \ge 0$ with
	$f_k \ge -C(\norm{\cdot}_Y + 1)$
	and
	$f_k\conjugate \ge -C(\norm{\cdot}_X + 1)$.
	Thus, we can invoke \cref{prop:strong_primal_recovery} to get \eqref{RSfs}.
	Moreover, \cref{lem:lower_bounds} provides us with \eqref{BPfs}.
	Thus, we can use
	\cref{lem:BP_subdifferential} and \cref{prop:RS_subdifferential}
	to deduce
	\ref{thm:main_result_conjugate:2}.

	``\ref{thm:main_result_conjugate:2} $\Rightarrow$ \ref{thm:main_result_conjugate:2b}'':
	This is clear.

	``\ref{thm:main_result_conjugate:2b} $\Rightarrow$ \ref{thm:main_result_conjugate:3}'':
	This is precisely \cref{prop:RSpf_implies_all}.

	``\ref{thm:main_result_conjugate:2b} $\Rightarrow$ \ref{thm:main_result_conjugate:2}'':
	Since 
	\ref{thm:main_result_conjugate:2b} implies \ref{thm:main_result_conjugate:3},
	we can invoke 
	\itemref{lem:lower_bounds:1} to derive \eqref{BPfcw}.
	Thus, we can invoke \cref{lem:BP_subdifferential}
	to deduce \eqref{BPpf}.
	This shows \ref{thm:main_result_conjugate:2}.

	``\ref{thm:main_result_conjugate:3} $\Rightarrow$ \ref{thm:main_result_conjugate:1}'':
	From \cref{lem:lower_bounds} we deduce \eqref{BPfcw}.
	In order to invoke \cref{prop:weakstar_dual_recovery},
	we note that \cref{lem:bound_on_fk}
	provides $C \ge 0$ such that
	$f_k \ge -C(\norm{\cdot}_Y + 1)$.
	Consequently, all assumptions of \cref{prop:weakstar_dual_recovery}
	are satisfied (under the additional assumptions)
	and this proves \eqref{RSfcw}.
\end{proof}

\section{Second-order conditions via subderivatives and the NDC}
\label{sec:SSC}

In this section,
we review the derivation of second-order conditions
for \eqref{eq:problem}
via weak-$\star$ second subderivatives.
Special attention is paid to
the non-degeneracy condition \eqref{eq:NDC}.
In particular,
we prove a novel characterization of this condition.

\begin{assumption}[Standing assumption]
	\label{asm:standing_subderivatives}
	We assume that $Y$ is a separable Banach space and we recall $X = Y\dualspace$.
	Further, we are given functions $G \colon X \to (-\infty,\infty]$
	and $F \colon \dom G \to \R$.
	We fix $\bar x \in \dom G$.

	We further assume that, associated with $F$ and $\bar x$,
	there exist $F'(\bar x) \in Y$
	and a bounded bilinear form $F''(\bar x) \colon X \times X \to \R$
	such that $X \ni h \mapsto F''(\bar x) h^2$
	is sequentially weak-$\star$ lower semicontinuous
	and such that
	\begin{equation}
		\label{eq:Taylor_F}
		\lim_{k \to \infty}
		\frac{
			F(\bar x + t_k h_k) - F(\bar x) - t_k \dual{h_k}{F'(\bar x)}_Y - \frac12 t_k^2 F''(\bar x) h_k^2
		}{ t_k^2 }
		=
		0
	\end{equation}
	holds for all sequences $\seq{t_k} \subset (0,\infty)$,
	$\seq{h_k} \subset X$ such that
	$t_k \to 0$, $h_k \weaklystar h$ and
	$\bar x + t_k h_k \in \dom G$ for all $k \in \N$.

	For convenience, we define $\bar\xi := -F'(\bar x) \in Y$.
\end{assumption}
Note that we use the standard abbreviation
$F''(\bar x) h_k^2 := F''(\bar x)[ h_k, h_k ]$.

The assumption \eqref{eq:Taylor_F} is similar to a second-order Taylor expansion.
However, due to the additional assumption $\bar x + t_k h_k \in \dom G$,
it can also be satisfied in situations in which $F$ is not twice differentiable on $X$,
see, e.g.,
\cite[Section~6.2]{ChristofWachsmuth2017:1}.

For the function $G$, we do not assume a Taylor expansion as in \eqref{eq:Taylor_F}.
Instead,
the theory is based on the notion of the weak-$\star$ second subderivative.
\begin{definition}
	\label{def:subderivative}
	The weak-$\star$ second subderivative
	$\sd^2 G(\bar x, \bar\xi; \cdot) \colon X \to [-\infty,\infty]$
	of $G$ at $\bar x$ for $\bar\xi$
	is defined via
	\begin{equation*}
		\sd^2 G(\bar x, \bar\xi; h)
		=
		\inf\set*{
			\liminf_{k \to \infty}
			\frac{ G(\bar x + t_k h_k) - G(\bar x) - t_k \dual{h_k}{\bar\xi}_Y }{\frac12 t_k^2}
			\given
			t_k \searrow 0, \; h_k \weaklystar h
		}
	\end{equation*}
	for all $h \in X$.
	Here, the infimum is taken w.r.t.\ all sequences $\seq{t_k} \subset (0,\infty)$,
	$\seq{h_k} \subset X$ such that
	$t_k \to 0$ and $h_k \weaklystar h$.

	We further say that $G$ is
	weak-$\star$ twice epidifferentiable
	at $\bar x$ for $\bar\xi$
	if for every $h \in X$
	and every sequence $\seq{t_k} \subset (0,\infty)$ with $t_k \to 0$
	there exists a sequence
	$\seq{h_k} \subset X$
	such that
	\begin{equation*}
		\sd^2 G(\bar x, \bar\xi; h)
		=
		\lim_{k \to \infty}
		\frac{ G(\bar x + t_k h_k) - G(\bar x) - t_k \dual{h_k}{\bar\xi}_Y }{\frac12 t_k^2}
	\end{equation*}
	and
	$h_k \weaklystar h$.
	If the recovery sequences $\seq{h_k}$ can even be chosen such that $h_k \to h$,
	we say that $G$ is 
	strongly twice epidifferentiable at $\bar x$ for $\bar\xi$.
\end{definition}

At this point, we would like to mention that
``strong twice epidifferentiability'' in the sense of \cref{def:subderivative}
is precisely what is called
``twice epidifferentiability'' in \cite{Do1992},
``second order Mosco differentiability'' in \cite{Noll1995}
and
``twice Mosco epidifferentiability'' in \cite{Levy1999}.

Another crucial role in the theory is played
by the so-called non-degeneracy condition.
\begin{definition}
	\label{def:NDC}
	We say that the non-degeneracy condition is satisfied
	if
	\begin{equation*}
		\label{eq:NDC}
		\tag{NDC}
		\begin{aligned}
			&\text{for all $\seq{t_k} \subset (0,\infty)$, $\seq{h_k} \subset X$
				with $t_k \to 0$, $h_k \weaklystar 0$
			and $\norm{h_k}_X = 1$, we have}\\
			&\qquad
			\liminf_{k \to \infty} \parens[\bigg]{
				\frac{
					G(\bar x + t_k h_k)
					-
					G(\bar x)
					-
					t_k \dual{h_k}{\bar\xi}_Y
				}{t_k^2}
				+
				\frac12 F''(\bar x) h_k^2
			}
			> 0
			.
		\end{aligned}
	\end{equation*}
\end{definition}

The importance of this condition
is explained by the following, fundamental result.
For a proof, we refer to
\cite[Theorem~2.20]{BorchardWachsmuth2023}
and the references therein.

\begin{theorem}
	\label{thm:no-gap}
	In addition to the standing assumption, let one of the following conditions be satisfied.
	\begin{enumerate}[label=(\arabic*)]
		\item
			The map $X \ni h \mapsto F''(\bar x) h^2$ is sequentially weak-$\star$ upper semicontinuous
			(i.e., it is sequentially weak-$\star$ continuous).
		\item
			The functional $G$ is strongly twice epidifferentiable at $\bar x$ for $\bar\xi$.
	\end{enumerate}
	Then,
	the quadratic growth condition
	\begin{equation*}
		\tag{\textup{QG}}
		\label{eq:QG}
		(F + G)(x)
		\ge
		(F + G)(\bar x)
		+
		\frac{c}{2} \norm{x - \bar x}_X^2
		\qquad
		\forall x \in B^X_r(\bar x)
	\end{equation*}
	holds
	with constants $c, r > 0$
	if and only if
	the second-order condition
	\begin{equation*}
		\label{eq:SSC}
		\tag{SSC}
		\sd^2 G(\bar x, \bar\xi; h) + F''(\bar x) h^2
		>
		0
		\qquad\forall h \in X \setminus \set{0}
	\end{equation*}
	and
	the non-degeneracy condition \eqref{eq:NDC}
	are satisfied.
\end{theorem}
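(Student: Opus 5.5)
The plan is to prove the two implications separately. Sufficiency (\eqref{eq:SSC} and \eqref{eq:NDC} $\Rightarrow$ \eqref{eq:QG}) goes by contradiction plus weak-$\star$ compactness and needs neither (1) nor (2). Necessity uses \eqref{eq:QG} along suitable sequences. The alternatives (1)/(2) are needed only to control the term $F''(\bar x)h_k^2$ from above when proving \eqref{eq:SSC}.

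\emph{Sufficiency.} Suppose \eqref{eq:QG} fails for every $c,r>0$. Then there are $x_k \to \bar x$ with $x_k \ne \bar x$ and
\begin{equation*}
	(F+G)(x_k) < (F+G)(\bar x) + \frac{1}{2k}\norm{x_k-\bar x}_X^2 .
\end{equation*}
In particular $x_k \in \dom G$. Set $t_k := \norm{x_k-\bar x}_X \to 0$ and $h_k := (x_k-\bar x)/t_k$, so that $\norm{h_k}_X = 1$. Since $Y$ is separable, we may pass to a subsequence with $h_k \weaklystar h$. Inserting the Taylor expansion \eqref{eq:Taylor_F} (admissible since $\bar x + t_k h_k = x_k \in \dom G$) and recalling $\bar\xi = -F'(\bar x)$ gives
\begin{equation*}
	\frac{G(\bar x+t_kh_k)-G(\bar x)-t_k\dual{h_k}{\bar\xi}_Y}{t_k^2} + \frac12 F''(\bar x)h_k^2 < \frac1{2k} + o(1).
\end{equation*}
There are two cases.
\begin{itemize}
	\item If $h = 0$, taking the $\liminf$ contradicts \eqref{eq:NDC} directly.
	\item If $h \ne 0$, the $\liminf$ of the first term is at least $\frac12 \sd^2G(\bar x,\bar\xi;h)$ by \cref{def:subderivative} (this holds along the subsequence as well). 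The $\liminf$ of $F''(\bar x)h_k^2$ is at least $F''(\bar x)h^2$ by the weak-$\star$ lower semicontinuity in \cref{asm:standing_subderivatives}. Using $\liminf(a_k+b_k) \ge \liminf a_k + \liminf b_k$ (harmless if a term is $-\infty$), we get $\sd^2G(\bar x,\bar\xi;h)+F''(\bar x)h^2 \le 0$, which contradicts \eqref{eq:SSC}.
\end{itemize}

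\emph{Necessity of \eqref{eq:NDC}.} Let $t_k\to0$, $h_k\weaklystar 0$ and $\norm{h_k}_X=1$. For large $k$ we have $\bar x+t_kh_k \in B^X_r(\bar x)$. If $\bar x+t_kh_k\notin\dom G$, the quotient is $+\infty$. Otherwise \eqref{eq:QG} together with \eqref{eq:Taylor_F} shows that the expression in \eqref{eq:NDC} is at least $\frac c2 + o(1)$, so its $\liminf$ is at least $c/2>0$.

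\emph{Necessity of \eqref{eq:SSC}.} Fix $h\ne0$ and any sequences $t_k\searrow0$, $h_k\weaklystar h$. By the uniform boundedness principle, $\seq{h_k}$ is bounded, so $\bar x+t_kh_k\in B^X_r(\bar x)$ eventually. The same computation as above yields
\begin{equation*}
	\frac{G(\bar x+t_kh_k)-G(\bar x)-t_k\dual{h_k}{\bar\xi}_Y}{\frac12 t_k^2} \ge c\norm{h_k}_X^2 - F''(\bar x)h_k^2 + o(1).
\end{equation*}
By weak-$\star$ lower semicontinuity of the norm, $\liminf \norm{h_k}_X^2 \ge \norm{h}_X^2$. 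The main obstacle is the term $-F''(\bar x)h_k^2$, for which we need $\limsup F''(\bar x)h_k^2 \le F''(\bar x)h^2$.
\begin{itemize}
	\item Under (1) this holds for every such sequence. Taking the $\liminf$ and then the infimum over all sequences gives $\sd^2G(\bar x,\bar\xi;h)+F''(\bar x)h^2 \ge c\norm{h}_X^2>0$.
	\item Under (2) we cannot argue for all sequences. Instead we use, for a fixed $t_k\searrow0$, the strongly convergent recovery sequence $h_k\to h$ from \cref{def:subderivative}, which realizes $\sd^2G(\bar x,\bar\xi;h)$ as a limit. Boundedness of the bilinear form then gives $F''(\bar x)h_k^2\to F''(\bar x)h^2$, and the same inequality follows.
\end{itemize}
In both cases \eqref{eq:QG} also rules out $\sd^2G(\bar x,\bar\xi;h)=-\infty$.
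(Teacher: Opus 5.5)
Your proof is correct and is essentially the standard argument behind the result that the paper does not prove itself but quotes from \cite[Theorem~2.20]{BorchardWachsmuth2023}; the paper remarks, exactly as you find, that (1) or (2) is only needed for \eqref{eq:QG}$\Rightarrow$\eqref{eq:SSC}, and your sufficiency part is the same weak-$\star$ compactness argument with the case split $h = 0$ / $h \ne 0$ that the paper uses in the proof of \cref{thm:characterization_of_NDC}. The only detail worth making explicit is that \eqref{eq:Taylor_F} may be applied only along the subsequence of indices with $\bar x + t_k h_k \in \dom G$, while the remaining indices contribute $+\infty$; your remarks on the value $+\infty$ already amount to this.
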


This result shows that the quadratic growth condition
cannot hold if \eqref{eq:NDC} is violated.
We further point out that the additional assumptions in
\cref{thm:no-gap} are only needed for the implication
``$\eqref{eq:QG}\Rightarrow\eqref{eq:SSC}$'',
whereas the other implications
``$\eqref{eq:QG}\Rightarrow\eqref{eq:NDC}$''
and
``$\eqref{eq:SSC} \land \eqref{eq:NDC}\Rightarrow\eqref{eq:QG}$''
hold without any further assumptions,
see \cite[Section~2]{BorchardWachsmuth2023}.

In the next result, we provide a novel characterization of \eqref{eq:NDC}.
\begin{theorem}
	\label{thm:characterization_of_NDC}
	In addition to \cref{asm:standing_subderivatives} we assume that
	$\sd^2 G( \bar x, \bar\xi; h) > -\infty$ for all $h \in X \setminus \set{0}$.
	Then the condition \eqref{eq:NDC} is satisfied
	if and only if
	there exist $r \in (0,\infty]$, $c > 0$, $n \in \N$ and $\xi_1, \ldots, \xi_n \in Y$ such that
	\begin{equation}
		\label{eq:growth_of_G}
		G(x)
		-
		G(\bar x)
		-
		\dual{x - \bar x}{\bar\xi}_Y
		+
		\frac12 F''(\bar x)(x - \bar x)^2
		\ge
		\frac{c}{2} \norm{x - \bar x}_X^2
		-
		\frac12 \sum_{j = 1}^n \dual{x - \bar x}{\xi_j}_Y^2
		\quad
		\forall x \in B^X_r(\bar x)
	\end{equation}
	holds.
\end{theorem}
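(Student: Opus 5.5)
The plan is to prove the two implications separately. The reverse implication is a direct estimate. The forward implication goes by contradiction, using weak-$\star$ sequential compactness of bounded sets in $X$, which is available since $Y$ is separable.

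\emph{``\eqref{eq:growth_of_G} $\Rightarrow$ \eqref{eq:NDC}''.}
Let $t_k \searrow 0$ and $h_k \weaklystar 0$ with $\norm{h_k}_X = 1$. For $k$ large we have $t_k \le r$, so we may insert $x = \bar x + t_k h_k$ into \eqref{eq:growth_of_G}. Dividing by $t_k^2$ gives
\begin{equation*}
	\frac{G(\bar x + t_k h_k) - G(\bar x) - t_k \dual{h_k}{\bar\xi}_Y}{t_k^2}
	+ \frac12 F''(\bar x) h_k^2
	\ge
	\frac c2 - \frac12 \sum_{j=1}^n \dual{h_k}{\xi_j}_Y^2 .
\end{equation*}
Since $h_k \weaklystar 0$ and the $\xi_j$ lie in $Y$, the finite sum tends to $0$. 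Hence the $\liminf$ is at least $c/2 > 0$, which is \eqref{eq:NDC}. This direction does not need the assumption on $\sd^2 G$.

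\emph{``\eqref{eq:NDC} $\Rightarrow$ \eqref{eq:growth_of_G}''.}
Fix a dense sequence $\seq{\eta_j} \subset Y$, which exists by separability. Suppose \eqref{eq:growth_of_G} fails for every choice of parameters. Then for each $n$ it fails with $r = c = 1/n$ and with the $n$ vectors $\xi_j := \sqrt n\, \eta_j$, $j = 1,\ldots,n$. So there is $x_n \in B^X_{1/n}(\bar x)$ violating the inequality. We note two consequences. First, $x_n \ne \bar x$, since both sides vanish at $x = \bar x$. Second, $x_n \in \dom G$, since otherwise the left-hand side is $+\infty$ and the inequality holds. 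Write $x_n = \bar x + t_n h_n$ with $t_n := \norm{x_n - \bar x}_X \in (0, 1/n]$ and $\norm{h_n}_X = 1$. Denote by $Q_n$ the quotient appearing in \eqref{eq:NDC}, evaluated at $(t_n, h_n)$. Dividing the violated inequality by $t_n^2$ gives
\begin{equation*}
	Q_n < \frac1{2n} - \frac n2 \sum_{j=1}^n \dual{h_n}{\eta_j}_Y^2 .
\end{equation*}
By separability of $Y$ we pass to a subsequence with $h_n \weaklystar h$ and distinguish two cases.

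If $h = 0$, the display yields $\liminf Q_n \le 0$, which contradicts \eqref{eq:NDC}.

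If $h \ne 0$, this is the step I expect to be the main obstacle, and it is the reason for the factor $\sqrt n$ in the choice of the $\xi_j$. By density of $\seq{\eta_j}$ there is some $m$ with $\dual{h}{\eta_m}_Y \ne 0$. For $n \ge m$ the sum is at least $\dual{h_n}{\eta_m}_Y^2$, and this tends to $\dual{h}{\eta_m}_Y^2 > 0$. Therefore the right-hand side tends to $-\infty$, and so $Q_n \to -\infty$ along the subsequence.

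We now bound $Q_n$ from below to reach a contradiction. By \cref{def:subderivative}, applied along the subsequence with $t_n \to 0$ and $h_n \weaklystar h$, and by the sequential weak-$\star$ lower semicontinuity of $h \mapsto F''(\bar x) h^2$ from \cref{asm:standing_subderivatives}, we get
\begin{equation*}
	\liminf Q_n \ge \frac12 \sd^2 G(\bar x, \bar\xi; h) + \frac12 F''(\bar x) h^2 .
\end{equation*}
The right-hand side is $> -\infty$ by the hypothesis $\sd^2 G(\bar x, \bar\xi; h) > -\infty$ for $h \ne 0$. This contradicts $Q_n \to -\infty$.

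Hence \eqref{eq:growth_of_G} holds for some $n$, with $r = c = 1/n$ and the finitely many vectors $\xi_j = \sqrt n\, \eta_j$, $j = 1, \ldots, n$.
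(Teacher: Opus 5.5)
Your proof is correct and follows essentially the same route as the paper. The easy direction is a direct insertion of $x=\bar x+t_kh_k$. The hard direction takes points in shrinking balls, normalizes them to $h_n$, extracts a weak-$\star$ convergent subsequence, and splits into two cases. If $h=0$, this contradicts \eqref{eq:NDC}. If $h\ne0$, this contradicts $\sd^2 G(\bar x,\bar\xi;h)>-\infty$ together with the weak-$\star$ lower semicontinuity of $F''(\bar x)$.

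The differences are only cosmetic. The paper argues directly via the increasing sequence of optimal constants $c_k$ rather than by contradiction. It also uses the unweighted sum $\sum_{j\le k}\dual{h_k}{\xi_j}_Y^2$, whose divergence needs infinitely many $\xi_j$ with $\abs{\dual{h}{\xi_j}_Y}$ bounded away from zero; your factor $\sqrt n$ makes a single index $m$ suffice.
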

\begin{proof}
	``$\Leftarrow$'':
	Let sequences $\seq{t_k} \subset (0,\infty)$ and $\seq{h_k} \subset X$
	as in \eqref{eq:NDC} be given.
	Since $t_k \to 0$ and since $\seq{h_k}$ is bounded, we have
	$\bar x + t_k h_k \in B^X_r(\bar x)$
	for $k$ large enough.
	Consequently, \eqref{eq:growth_of_G}
	can be applied with $x = \bar x + t_k h_k$ for $k$ large enough
	and this directly implies
	\begin{align*}
		\MoveEqLeft
		\liminf_{k \to \infty} \parens[\bigg]{
			\frac{
				G(\bar x + t_k h_k)
				-
				G(\bar x)
				-
				t_k \dual{h_k}{\bar\xi}_Y
			}{t_k^2}
			+
			\frac12 F''(\bar x) h_k^2
		}
		\\&
		\ge
		\liminf_{k \to \infty}
		\braces[\big]{
			\frac{c}{2} \norm{h_k}_X^2
			-
			\frac12 \sum_{j = 1}^n \dual{h_k}{\xi_j}_Y^2
		}
		\ge
		\frac{c}{2}
		,
	\end{align*}
	where we used $\norm{h_k}_X = 1$ and $h_k \weaklystar 0$ in the last estimate.

	``$\Rightarrow$'':
	Let $\set{\xi_j \given j \in \N} \subset Y$ be a dense subset of $Y$.
	We show the existence of $k \in \N$ such that
	\eqref{eq:growth_of_G} holds with $n = k$ and $r = 1/k$.
	We follow the idea of the proof of \cite[Theorem~2.9]{WachsmuthWachsmuth2021}.
	We abbreviate
	\begin{equation*}
		\Phi_k(x)
		:=
		G(x)
		-
		G(\bar x)
		-
		\dual{x - \bar x}{\bar\xi}_Y
		+
		\frac12 F''(\bar x)(x - \bar x)^2
		+
		\frac12 \sum_{j = 1}^k \dual{x - \bar x}{\xi_j}_Y^2
	\end{equation*}
	and
	\begin{equation*}
		c_k
		:=
		\inf\set*{
			\frac{\Phi_k(x)}{\frac12 \norm{x - \bar x}_X^2}
			\given
			x \in B^X_{1/k}(\bar x) \setminus \set{\bar x}
		}
		\in
		[-\infty, \infty]
		.
	\end{equation*}
	Note that $c_k > 0$ is equivalent to \eqref{eq:growth_of_G} being satisfied
	with $c = c_k$, $r = 1/k$ and $n = k$.
	Further, the sequence $c_k$ is increasing and, consequently, it is enough to show that
	\begin{equation*}
		\hat c
		:=
		\lim_{k \to \infty} c_k
		>
		0
		.
	\end{equation*}
	Owing to the definition of $c_k$, we can find a sequence $\seq{x_k} \subset X$
	such that $x_k \in B_{1/k}(\bar x) \setminus \set{\bar x}$ and
	\begin{equation*}
		\hat c
		=
		\lim_{k \to \infty} \frac{ \Phi_k(x_k)}{ \frac12 \norm{x_k - \bar x}_X^2 }
		.
	\end{equation*}
	We define
	$t_k := \norm{x_k - \bar x}_X > 0$,
	$h_k := (x_k - \bar x) / t_k$.
	Then, $t_k \to 0$ and $\norm{h_k}_X = 1$.
	Since $Y$ is assumed to be separable, we may assume
	that $h_k \weaklystar h$ in $X$
	for some $h \in X$.

	In case $h = 0$, we have
	\begin{align*}
		\hat c
		&=
		\lim_{k \to \infty} \frac{ \Phi_k(\bar x + t_k h_k)}{ \frac12 t_k^2 }
		\\&
		=
		\lim_{k \to \infty}
		\frac{
			G(\bar x + t_k h_k)
			-
			G(\bar x)
			-
			t_k \dual{h_k}{\bar\xi}_Y
			+
			\frac12 t_k^2 F''(\bar x)h_k^2
			+
			\frac12 t_k^2 \sum_{j = 1}^k \dual{h_k}{\xi_j}_Y^2
		}{ \frac12 t_k^2 }
		\\&
		\ge
		\liminf_{k \to \infty}
		\frac{
			G(\bar x + t_k h_k)
			-
			G(\bar x)
			-
			t_k \dual{h_k}{\bar\xi}_Y
			+
			\frac12 t_k^2 F''(\bar x)h_k^2
		}{ \frac12 t_k^2 }
		>
		0,
	\end{align*}
	where we used \eqref{eq:NDC} in the final estimate.

	In case $h \ne 0$,
	we have
	\begin{equation*}
		\liminf_{k \to \infty}
		\frac{
			G(\bar x + t_k h_k)
			-
			G(\bar x)
			-
			t_k \dual{h_k}{\bar\xi}_Y
		}{ \frac12 t_k^2 }
		\ge
		\sd^2 G(\bar x, \bar\xi; h)
		>
		-\infty
	\end{equation*}
	by assumption
	and
	\begin{equation*}
		\liminf_{k \to \infty} F''(\bar x) h_k^2
		\ge
		F''(\bar x) h^2
		>
		-\infty
	\end{equation*}
	by the sequential weak-$\star$ lower semicontinuity of $h \mapsto F''(\bar x)h^2$.
	Further, the density of $\seq{\xi_j} \subset Y$ implies
	\begin{equation*}
		\lim_{k \to \infty}
		\sum_{j = 1}^k \dual{h_k}{\xi_j}_Y^2
		=
		\infty
		.
	\end{equation*}
	Consequently,
	\begin{align*}
		\hat c
		&=
		\lim_{k \to \infty} \frac{ \Phi_k(\bar x + t_k h_k)}{ \frac12 t_k^2 }
		\\&
		=
		\lim_{k \to \infty}
		\frac{
			G(\bar x + t_k h_k)
			-
			G(\bar x)
			-
			t_k \dual{h_k}{\bar\xi}_Y
			+
			\frac12 t_k^2 F''(\bar x)h_k^2
			+
			\frac12 t_k^2 \sum_{j = 1}^k \dual{h_k}{\xi_j}_Y^2
		}{ \frac12 t_k^2 }
		\\&
		\ge
		\liminf_{k \to \infty}
		\frac{
			G(\bar x + t_k h_k)
			-
			G(\bar x)
			-
			t_k \dual{h_k}{\bar\xi}_Y
		}{ \frac12 t_k^2 }
		+
		\liminf_{k \to \infty} F''(\bar x) h_k^2
		+
		\lim_{k \to \infty}
		\sum_{j = 1}^k \dual{h_k}{\xi_j}_Y^2
		=
		\infty.
	\end{align*}
	Hence, we have shown $\hat c > 0$
	and, as explained above, this finishes the proof.
\end{proof}
Note that \eqref{eq:SSC} implies
\(
	\sd^2 G(\bar x,\bar\xi; h)
	\ge
	F''(\bar x) h^2
	>
	-\infty
\)
for all $h \ne 0$
and this implies the additional assumption
$\sd^2 G(\bar x,\bar\xi; h) > -\infty$ for all $h \ne 0$
required in \cref{thm:characterization_of_NDC}.

If we assume the stronger continuity property
of $h \mapsto F''(\bar x)h^2$ from
\cref{thm:no-gap},
which holds for many interesting applications,
we can strengthen the result of \cref{thm:characterization_of_NDC}.
\begin{corollary}
	\label{cor:NDC_weak_cts}
	In addition to the assumption of \cref{thm:characterization_of_NDC},
	we assume that
	$X \ni h \mapsto F''(\bar x) h^2$
	is sequentially weak-$\star$ continuous.
	Then, \eqref{eq:NDC} is satisfied
	if and only if
	there exist $r \in (0,\infty]$, $c > 0$, $n \in \N$ and $\xi_1, \ldots, \xi_n \in Y$ such that
	\begin{equation}
		\label{eq:growth_of_G_without_F}
		G(x) - G(\bar x) - \dual{x - \bar x}{\bar\xi}_Y
		\ge
		\frac{c}{2} \norm{x - \bar x}_X^2
		-
		\frac12 \sum_{j = 1}^n \dual{x - \bar x}{\xi_j}_Y^2
		\quad
		\forall x \in B^X_r(\bar x)
	\end{equation}
	holds.
\end{corollary}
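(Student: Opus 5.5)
The plan is to reduce \cref{cor:NDC_weak_cts} to \cref{thm:characterization_of_NDC}. The reduction rests on one observation: a bounded bilinear form whose quadratic form is sequentially weak-$\star$ continuous can be absorbed, on bounded sets, into a multiple of the norm squared plus finitely many squared dual pairings. So the task is to show that \eqref{eq:growth_of_G} and \eqref{eq:growth_of_G_without_F} are equivalent, possibly after changing $c$, $r$, $n$ and the $\xi_j$.

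\textbf{The implication from \eqref{eq:growth_of_G_without_F} to \eqref{eq:NDC}.} I would argue directly rather than through \eqref{eq:growth_of_G}. Take $t_k \to 0$ and $h_k \weaklystar 0$ with $\norm{h_k}_X = 1$. For large $k$ we may insert $x = \bar x + t_k h_k$ into \eqref{eq:growth_of_G_without_F}. The difference quotient of $G$ is then at least
\[
\frac{c}{2} - \frac12 \sum_{j=1}^n \dual{h_k}{\xi_j}_Y^2 ,
\]
and the sum tends to zero. Weak-$\star$ continuity gives $F''(\bar x) h_k^2 \to F''(\bar x) 0^2 = 0$. Hence the $\liminf$ in \eqref{eq:NDC} is at least $c/2 > 0$.

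\textbf{The implication from \eqref{eq:NDC} to \eqref{eq:growth_of_G_without_F}.} By \cref{thm:characterization_of_NDC} we have \eqref{eq:growth_of_G} with constants $c, r, n, \xi_j$. It then suffices to prove the following claim: for every $\varepsilon > 0$ there exist $m \in \N$ and $\eta_1, \ldots, \eta_m \in Y$ such that
\[
\abs{F''(\bar x) h^2} \le \varepsilon \norm{h}_X^2 + \sum_{i=1}^m \dual{h}{\eta_i}_Y^2
\qquad \forall h \in X .
\]
Applying the claim with $\varepsilon = c/2$ and moving $\frac12 F''(\bar x)(x-\bar x)^2$ to the right-hand side of \eqref{eq:growth_of_G} gives \eqref{eq:growth_of_G_without_F}. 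The constants become $c/2$ in place of $c$, the same radius $r$, and the family $\xi_1, \ldots, \xi_n, \eta_1, \ldots, \eta_m$, where each $\eta_i$ is scaled by a factor $\sqrt{2}$ to absorb the factor $\tfrac12$.

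\textbf{Proof of the claim, which is the main step.} By homogeneity it suffices to consider $\norm{h}_X = 1$. I would argue by contradiction, in the spirit of the proof of \cref{thm:characterization_of_NDC}. Fix a dense sequence $\seq{\eta_i} \subset Y$ and suppose that for every $k$ there is $h_k$ with $\norm{h_k}_X = 1$ and
\[
\abs{F''(\bar x) h_k^2} > \varepsilon + \sum_{i=1}^k \dual{h_k}{\eta_i}_Y^2 .
\]
The left-hand side is bounded by $\norm{F''(\bar x)}$, so for each fixed $i$ we get $\dual{h_k}{\eta_i}_Y^2 \le \norm{F''(\bar x)}$ once $k \ge i$. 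Since $Y$ is separable, the closed unit ball of $X$ is weak-$\star$ sequentially compact, so along a subsequence $h_k \weaklystar h$. The displayed sums are uniformly bounded, which forces $\dual{h}{\eta_i}_Y = 0$ for every $i$. By density of $\seq{\eta_i}$ this gives $h = 0$. Weak-$\star$ continuity then yields $F''(\bar x) h_k^2 \to 0$, which contradicts $\abs{F''(\bar x) h_k^2} > \varepsilon$. The delicate point is that the first $k$ elements of one fixed dense sequence must serve for every $k$; with that choice the contradiction argument produces the finite family required by the claim.
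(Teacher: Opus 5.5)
Your argument is correct, but it is organized differently from the paper's. The paper removes $F''(\bar x)$ at the level of \eqref{eq:NDC}. Since the test sequences there satisfy $h_k \weaklystar 0$, weak-$\star$ continuity gives $F''(\bar x)h_k^2 \to 0$, so \eqref{eq:NDC} is equivalent to the same condition without the $F''$-term. That condition is precisely \eqref{eq:NDC} for modified data, in which $F$ is replaced by the linear functional $x \mapsto -\dual{x}{\bar\xi}_Y$ and $F''(\bar x)$ by $0$. For these data the standing assumptions hold trivially, $\bar\xi$ is unchanged, and the hypothesis $\sd^2 G(\bar x,\bar\xi;\cdot) > -\infty$ does not involve $F$. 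Moreover, \eqref{eq:growth_of_G} then reads exactly as \eqref{eq:growth_of_G_without_F}, so a single application of \cref{thm:characterization_of_NDC} gives both directions.

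You instead apply the theorem to the original data and remove $F''(\bar x)$ at the level of the growth inequality. This costs you an additional lemma with its own contradiction argument, namely $\abs{F''(\bar x)h^2} \le \varepsilon\norm{h}_X^2 + \sum_{i=1}^m \dual{h}{\eta_i}_Y^2$ for all $h \in X$. In return, the lemma yields a direct equivalence of \eqref{eq:growth_of_G} and \eqref{eq:growth_of_G_without_F}, up to halving $c$ and enlarging the family $\xi_j$. This equivalence uses neither \eqref{eq:NDC} nor the hypothesis $\sd^2 G(\bar x,\bar\xi;h) > -\infty$. The paper obtains it only in the remark after the corollary, by passing through \eqref{eq:NDC}.

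One step of your lemma should be justified more carefully. Uniform boundedness of the partial sums only gives $\sum_{i=1}^\infty \dual{h}{\eta_i}_Y^2 < \infty$ for the weak-$\star$ limit $h$. That does not by itself force each term to vanish. What you need is that a dense sequence meets every nonempty open subset of $Y$ for infinitely many indices. Then $h \ne 0$ yields $\abs{\dual{h}{\eta_i}_Y} \ge \delta > 0$ for infinitely many $i$, so the series diverges. Alternatively, enumerate the dense set so that each element is repeated infinitely often. This is the same fact the paper uses in the case $h \ne 0$ of the proof of \cref{thm:characterization_of_NDC}.
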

\begin{proof}
	Under the additional assumption, it is clear that \eqref{eq:NDC} is equivalent to
	\begin{equation*}
		\label{eq:NDCp}
		\tag{NDC'}
		\begin{aligned}
			&\text{for all $\seq{t_k} \subset (0,\infty)$, $\seq{h_k} \subset X$
				with $t_k \to 0$, $h_k \weaklystar 0$
			and $\norm{h_k}_X = 1$, we have}\\
			&\qquad
			\liminf_{k \to \infty} \parens[\bigg]{
				\frac1{t_k^2}
				\braces[\big]{
					G(\bar x + t_k h_k)
					-
					G(\bar x)
					-
					t_k \dual{h_k}{\bar\xi}_Y
				}
			}
			> 0
			.
		\end{aligned}
	\end{equation*}
	Consequently, the result follows by applying \cref{thm:characterization_of_NDC}
	with $F$ and $F''(\bar x)$ replaced by $-\bar\xi$ and $0$, respectively.
\end{proof}
Note that (the proof of) \cite[Lemma~4.6]{WachsmuthWalter2024}
already provides the result that \eqref{eq:growth_of_G_without_F}
implies \eqref{eq:NDC}
and this does not use the additional property of $F''(\bar x)$ from \cref{cor:NDC_weak_cts}.
Hence,
\eqref{eq:growth_of_G_without_F} implies \eqref{eq:growth_of_G}
under the assumptions of \cref{thm:characterization_of_NDC}.
Moreover, under the additional assumption from \cref{cor:NDC_weak_cts},
\eqref{eq:growth_of_G} and \eqref{eq:growth_of_G_without_F}
become equivalent to each other.

Another interesting situation appears if we apply \cref{thm:characterization_of_NDC}
for $G \equiv 0$.
In this setting, we recover the results of \cite{Harder2018}
and \cite[Section~4.3]{Harder2021}
that (weak-$\star$) Legendre forms can only exist on Hilbertizable spaces.
\begin{corollary}
	\label{cor:Legendre}
	Let us assume that the bounded bilinear form $F''(\bar x) \colon X \times X \to \R$
	is a weak-$\star$ Legendre form in the sense that
	$X \ni h \mapsto F''(\bar x) h^2$ is
	sequentially weak-$\star$ lower semicontinuous
	and for all sequences $\seq{h_k} \subset X$ and $h \in X$ we have the implication
	\begin{equation*}
		h_k \weaklystar h
		\quad\text{and}\quad
		F''(\bar x) h_k^2 \to F''(\bar x) h^2
		\qquad\Rightarrow\qquad
		h_k \to h
		.
	\end{equation*}
	Then,
	there exist $c > 0$, $n \in \N$ and $\xi_1, \ldots, \xi_n \in Y$ such that
	\begin{equation}
		\label{eq:growth_of_Fpp}
		\frac12 F''(\bar x) h^2
		\ge
		\frac{c}{2} \norm{h}_X^2
		-
		\frac12 \sum_{j = 1}^n \dual{h}{\xi_j}_Y^2
		\quad
		\forall h \in X
	\end{equation}
	holds.
	In particular, $X$ is isomorphic to a Hilbert space.
\end{corollary}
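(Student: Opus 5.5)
The plan is to apply \cref{thm:characterization_of_NDC} with $G \equiv 0$, choosing $\bar x := 0$ and $F$ to be the quadratic functional $F(x) := \frac12 F''(\bar x) x^2$. Then $F'(\bar x) = 0$, so $\bar\xi = 0$. The Taylor condition \eqref{eq:Taylor_F} holds exactly, since the numerator vanishes identically. The sequential weak-$\star$ lower semicontinuity of $h \mapsto F''(\bar x)h^2$ is part of the Legendre-form hypothesis. For $G \equiv 0$ we have $\sd^2 G(\bar x,\bar\xi;h) = 0 > -\infty$ for all $h$. Hence all assumptions of \cref{thm:characterization_of_NDC} hold, and it suffices to verify \eqref{eq:NDC}.

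In this setting \eqref{eq:NDC} reads: for all $t_k \to 0$ and $h_k \weaklystar 0$ with $\norm{h_k}_X = 1$, we have $\liminf_{k\to\infty} \frac12 F''(\bar x) h_k^2 > 0$. I would argue by contradiction. Suppose there is a sequence with $\liminf \le 0$, and pass to a subsequence along which $F''(\bar x)h_k^2$ converges to some $a \le 0$. Weak-$\star$ lower semicontinuity gives $a \ge F''(\bar x)0^2 = 0$, so $a = 0$. Then $h_k \weaklystar 0$ and $F''(\bar x)h_k^2 \to 0 = F''(\bar x)0^2$, and the Legendre property yields $h_k \to 0$ strongly. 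This contradicts $\norm{h_k}_X = 1$. Therefore \eqref{eq:NDC} holds, and \cref{thm:characterization_of_NDC} provides $r,c,n,\xi_j$ with
\begin{equation*}
	\frac12 F''(\bar x)h^2 \ge \frac c2\norm{h}_X^2 - \frac12\sum_{j=1}^n \dual{h}{\xi_j}_Y^2
	\qquad \forall h \in B_r^X(0).
\end{equation*}
Both sides are $2$-homogeneous in $h$, so scaling extends this inequality to all $h \in X$, which is \eqref{eq:growth_of_Fpp}.

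For the final claim, define the symmetric bilinear form
\begin{equation*}
	\langle h, g\rangle := \frac12\bigl(F''(\bar x)[h,g] + F''(\bar x)[g,h]\bigr) + \sum_{j=1}^n \dual{h}{\xi_j}_Y\dual{g}{\xi_j}_Y .
\end{equation*}
Its associated quadratic form is $F''(\bar x)h^2 + \sum_j \dual{h}{\xi_j}_Y^2 \ge c\norm{h}_X^2$. It is also bounded above by $C\norm{h}_X^2$, because $F''(\bar x)$ is bounded and $\xi_j \in Y$. So this is an inner product whose norm is equivalent to $\norm{\cdot}_X$. Since $X$ is complete, $X$ with this inner product is a Hilbert space isomorphic to $X$.

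The main obstacle is checking that the hypotheses of \cref{thm:characterization_of_NDC} are genuinely satisfied in this degenerate choice, in particular that \eqref{eq:Taylor_F} allows $F$ to be a pure quadratic defined on $\dom G = X$. That check is trivial here. The remaining steps are the contradiction argument and the homogeneity extension, both of which are short.
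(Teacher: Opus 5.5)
Your proposal is correct and follows essentially the paper's argument: you verify \eqref{eq:NDC} for $G \equiv 0$, $\bar\xi = 0$ by combining the weak-$\star$ lower semicontinuity with the Legendre implication along a subsequence, invoke \cref{thm:characterization_of_NDC}, and obtain the Hilbert structure from the symmetrized form plus the finite-rank term. Two details that the paper leaves implicit are spelled out in your version: the check that $\sd^2 G(\bar x,\bar\xi;\cdot) \equiv 0 > -\infty$, and the extension of the estimate from $B^X_r(0)$ to all of $X$ by $2$-homogeneity.
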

\begin{proof}
	We check that \eqref{eq:NDC} is satisfied with $G \equiv 0$ and $\bar\xi = 0$.
	That is, given sequences $\seq{t_k} \subset (0,\infty)$ and $\seq{h_k} \subset X$
	with $t_k \to 0$, $h_k \weaklystar 0$ and $\norm{h_k}_X = 1$,
	we have to verify
	$\liminf_{k \to \infty} F''(\bar x) h_k^2 > 0$.
	The assumed semicontinuity already provides us with
	$\liminf_{k \to \infty} F''(\bar x) h_k^2 \ge 0$.
	If the limit inferior were $0$, we would have
	$h_{n_k} \weaklystar 0$ and $\lim_{k \to \infty} F''(\bar x) h_{n_k}^2 \to 0 = F''(\bar x) 0^2$
	along a corresponding subsequence,
	and, by assumption, this implies $h_{n_k} \to 0$ in $X$
	which contradicts $\norm{h_{n_k}}_X = 1$.
	Hence, \eqref{eq:NDC} holds.

	Now, the application of \cref{thm:characterization_of_NDC}
	implies \eqref{eq:growth_of_Fpp}.
	W.l.o.g., we may assume that $F''(\bar x)$
	is symmetric,
	since $h \mapsto F''(\bar x) h^2$ only depends on the symmetric part
	of $F''(\bar x)$.
	Consequently,
	\begin{equation*}
		\innerprod{h_1}{h_2}_X
		:=
		F''(\bar x)[h_1, h_2]
		+
		\sum_{j = 1}^n \dual{h_1}{\xi_j}_Y \dual{h_2}{\xi_j}_Y
	\end{equation*}
	defines an inner product on $X$ whose associated norm is equivalent to the norm of $X$.
	This yields the claim.
\end{proof}

We collect the findings of this section.
\begin{theorem}
	\label{thm:SSC}
	Under one of the additional assumptions of \cref{thm:no-gap},
	the following conditions are equivalent.
	\begin{enumerate}
		\item
			\eqref{eq:QG} holds with $c, r > 0$.
		\item
			\eqref{eq:SSC} and \eqref{eq:NDC} hold.
		\item
			\eqref{eq:SSC} and \eqref{eq:growth_of_G}
			hold
			with $r \in (0,\infty]$, $c > 0$, $n \in \N$, and $\xi_1, \ldots, \xi_n \in Y$.
	\end{enumerate}
	If $h \mapsto F''(\bar x) h^2$ is sequentially weak-$\star$ continuous,
	then also
	\begin{enumerate}[resume]
		\item
			\eqref{eq:SSC} and \eqref{eq:growth_of_G_without_F}
			hold
			with $r \in (0,\infty]$, $c > 0$, $n \in \N$, and $\xi_1, \ldots, \xi_n \in Y$.
	\end{enumerate}
	is equivalent to the previous statements.
\end{theorem}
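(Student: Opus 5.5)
This is a collection result, so the plan is to chain \cref{thm:no-gap}, \cref{thm:characterization_of_NDC} and \cref{cor:NDC_weak_cts} under the standing assumption and one of the additional assumptions of \cref{thm:no-gap}.

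First, the equivalence of (i) and (ii) is exactly \cref{thm:no-gap}, so nothing new is needed there.

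Next, for (ii) $\Leftrightarrow$ (iii) we use \cref{thm:characterization_of_NDC}, but we must first verify its extra hypothesis $\sd^2 G(\bar x,\bar\xi;h) > -\infty$ for all $h \ne 0$. In both directions \eqref{eq:SSC} is part of the assumption. It gives $\sd^2 G(\bar x,\bar\xi;h) > -F''(\bar x)h^2 > -\infty$ for $h \neq 0$, because $F''(\bar x)$ is a bounded bilinear form and hence $F''(\bar x)h^2$ is finite. This is the remark made right after the proof of \cref{thm:characterization_of_NDC}. With this hypothesis in place, \cref{thm:characterization_of_NDC} shows that \eqref{eq:NDC} is equivalent to \eqref{eq:growth_of_G}, and so (ii) and (iii) are equivalent.

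Finally, suppose $h \mapsto F''(\bar x)h^2$ is sequentially weak-$\star$ continuous. Again \eqref{eq:SSC} supplies the finiteness hypothesis, so \cref{cor:NDC_weak_cts} gives that \eqref{eq:NDC} is equivalent to \eqref{eq:growth_of_G_without_F}. Hence (iv) is equivalent to (ii).

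No step is a real obstacle. The one point needing care is that the finiteness assumption of \cref{thm:characterization_of_NDC} has to be taken from \eqref{eq:SSC}, which appears in every statement of the list. Because of this, the equivalences must be stated with \eqref{eq:SSC} held fixed, and not as a bare equivalence between \eqref{eq:NDC} and the growth conditions.
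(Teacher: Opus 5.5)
Your proposal is correct and matches the paper, which presents this theorem as a summary of \cref{thm:no-gap}, \cref{thm:characterization_of_NDC} and \cref{cor:NDC_weak_cts}, using \eqref{eq:SSC} to supply the finiteness hypothesis exactly as in the remark after \cref{thm:characterization_of_NDC}. Your bound $\sd^2 G(\bar x,\bar\xi;h) > -F''(\bar x)h^2 > -\infty$ is the correct form of that remark's inequality, which the paper writes with a sign slip.
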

In a certain sense,
the condition that $h \mapsto F''(\bar x) h^2$ is sequentially weak-$\star$ continuous
means that
$F''(\bar x)$ (or $F$ itself)
does not has a regularizing effect in the problem \eqref{eq:problem}.
In particular, it rules out Legendre forms on Hilbert spaces.
In this case, all the regularization has to come from $G$
and, therefore, \eqref{eq:growth_of_G_without_F}
has to be satisfied whenever we want to achieve
quadratic growth in the space $X$.

In the final result of this section,
we want to demonstrate another crucial consequence of \eqref{eq:growth_of_G_without_F}.
It shows that sequences having a bounded second-order difference quotient
can be bounded in norm by an explicit expression.
\begin{proposition}
	\label{prop:consequence_of_growth}
	Suppose that \eqref{eq:growth_of_G_without_F} is satisfied.
	Then,
	for all sequences $\seq{t_k} \subset (0,\infty)$ and $\seq{h_k} \subset X$
	satisfying $t_k \to 0$, $h_k \weaklystar h$ in $X$ for some $h \in X$
	we have
	\begin{equation*}
		\limsup_{k \to \infty} \norm{ h_k }_X^2
		\le
		\frac{1}{c}
		\parens[\bigg]{
		L
		+
		\sum_{j = 1}^n \dual{h}{\xi_j}_Y^2
	},
	\end{equation*}
	where we used $c$, $n$ and $\xi_j$, $j = 1,\ldots,n$,
	from \eqref{eq:growth_of_G_without_F},
	and
	\begin{equation*}
		L
		:=
		\limsup_{k \to \infty}
		\frac{ G(\bar x + t_k h_k) - G(\bar x) - t_k \dual{h_k}{\bar\xi}_Y }{\frac12 t_k^2}
		.
	\end{equation*}
\end{proposition}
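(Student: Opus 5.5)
The plan is to insert $x = \bar x + t_k h_k$ into \eqref{eq:growth_of_G_without_F}, divide by $\frac12 t_k^2$, and pass to the limit superior. The weak-$\star$ convergence $h_k \weaklystar h$ will take care of the finitely many correction terms.

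First I make sure the inequality is applicable. Weak-$\star$ convergent sequences in $X = Y\dualspace$ are bounded by the uniform boundedness principle, since $Y$ is a Banach space. Hence $\sup_k \norm{h_k}_X < \infty$ and $t_k \norm{h_k}_X \to 0$. Therefore $\bar x + t_k h_k \in B^X_r(\bar x)$ for all $k$ large enough; this is trivial if $r = \infty$. For these $k$, \eqref{eq:growth_of_G_without_F} with $x - \bar x = t_k h_k$, after division by $\frac12 t_k^2 > 0$, gives
\begin{equation*}
	\frac{ G(\bar x + t_k h_k) - G(\bar x) - t_k \dual{h_k}{\bar\xi}_Y }{\frac12 t_k^2}
	\ge
	c \norm{h_k}_X^2
	-
	\sum_{j = 1}^n \dual{h_k}{\xi_j}_Y^2
	.
\end{equation*}
Here the quadratic homogeneity of both terms on the right-hand side is used.

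Next I rearrange to $c \norm{h_k}_X^2 \le q_k + \sum_{j=1}^n \dual{h_k}{\xi_j}_Y^2$, where $q_k$ is the difference quotient, and take $\limsup_{k\to\infty}$. Since $\xi_j \in Y$ and $h_k \weaklystar h$, each $\dual{h_k}{\xi_j}_Y \to \dual{h}{\xi_j}_Y$, and the finite sum of squares converges. The limsup of a sum whose second summand converges is the limsup of the first summand plus that limit. This gives $c \limsup_k \norm{h_k}_X^2 \le L + \sum_{j=1}^n \dual{h}{\xi_j}_Y^2$, and dividing by $c > 0$ yields the claim.

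The only point needing care is the extended-valued bookkeeping. If $L = +\infty$ the statement is trivial. If $L = -\infty$ the inequality would force $\limsup \norm{h_k}_X^2 = -\infty$, which is impossible, so this case cannot occur; the same estimate shows $L \ge -\sum_{j=1}^n \dual{h}{\xi_j}_Y^2$. Since the $\xi_j$-term is a convergent real sequence, there is no $\infty - \infty$ issue, and I do not expect any genuine obstacle beyond the boundedness argument that ensures $\bar x + t_k h_k$ eventually lies in $B^X_r(\bar x)$.
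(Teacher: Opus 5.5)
Your proposal is correct and follows the paper's proof essentially verbatim: insert $x = \bar x + t_k h_k$ into \eqref{eq:growth_of_G_without_F} for large $k$, divide by $\frac12 t_k^2$, and take the limit superior, using $\dual{h_k}{\xi_j}_Y \to \dual{h}{\xi_j}_Y$. Your explicit boundedness argument via uniform boundedness and your extended-valued bookkeeping are details the paper leaves implicit.
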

\begin{proof}
	From \eqref{eq:growth_of_G_without_F} and
	$t_k h_k \to 0$ in $X$
	we get
	\begin{equation*}
		G(\bar x + t_k h_k) - G(\bar x) - t_k \dual{h_k}{\bar\xi}_Y
		\ge
		\frac{c}{2} \norm{t_k h_k}_X^2
		-
		\frac12 \sum_{j = 1}^n \dual{t_k h_k}{\xi_j}_Y^2
	\end{equation*}
	for all $k$ large enough.
	Consequently,
	\begin{align*}
		L
		&=
		\limsup_{k \to \infty}
		\frac{ G(\bar x + t_k h_k) - G(\bar x) - t_k \dual{h_k}{\bar\xi}_Y }{\frac12 t_k^2}
		\\&
		\ge
		\limsup_{k \to \infty} \braces*{
			c \norm{h_k}_X^2
			-
			\sum_{j = 1}^n \dual{h_k}{\xi_j}_Y^2 }
		=
		\limsup_{k \to \infty} \braces*{
			c \norm{h_k}_X^2
		}
		-
		\sum_{j = 1}^n \dual{h}{\xi_j}_Y^2
		.
	\end{align*}
	This yields the claim.
\end{proof}

\section{Applications to weak-\texorpdfstring{$\star$}{*} second subderivatives}
\label{sec:subderivatives}

In this section, we want to apply the findings of
\cref{sec:epi,sec:subdifferentials}
to simplify the computation of the weak-$\star$ second subderivative.
To this end, we will utilize the condition \eqref{eq:growth_of_G_without_F}
which is (under some assumptions)
equivalent to \eqref{eq:NDC}, see \cref{cor:NDC_weak_cts}.

In order to apply \cref{thm:main_result_conjugate},
we need to assume that the functional $G$ is actually a conjugate function.
In particular, $G$ has to be a convex function.
\begin{assumption}[Standing assumption]
	\label{asm:standing_application}
	In addition to \cref{asm:standing_subderivatives},
	we assume the existence of $J \in \Gamma_0(Y)$
	such that $G = J\conjugate$.
	Moreover, we assume that $\bar\xi \in \partial G(\bar x)$.
\end{assumption}
Recall that we introduced the definition
$\bar\xi := -F'(\bar x)$ in \cref{asm:standing_subderivatives}.
Thus, the last assumption
is precisely the first-order optimality condition for
problem \eqref{eq:problem} at $\bar x$,
see, e.g., \cite[Theorem~2.12]{BorchardWachsmuth2023}.
By the Fenchel--Young identity,
this condition is also equivalent to
$\bar x \in \partial J(\bar\xi)$.
Further,
it implies
$\sd^2 G(\bar x, \bar\xi; \cdot) \ge 0$.

To explain the connection between \cref{sec:epi}
and the definition of the weak-$\star$ subderivative,
we fix a sequence $\seq{t_k} \subset (0,\infty)$
and define
the second-order difference quotients
\begin{equation*}
	\Delta^2_{t_k} G(\bar x, \bar \xi; h)
	:=
	\frac{ G(\bar x + t_k h) - G(\bar x) - t_k \dual{h}{\bar\xi}_Y }{\frac12 t_k^2}
\end{equation*}
and
\begin{equation*}
	\Delta^2_{t_k} J(\bar\xi, \bar x; y)
	:=
	\frac{ J(\bar \xi + t_k y) - J(\bar \xi) - t_k \dual{\bar x}{y}_Y }{\frac12 t_k^2}
	.
\end{equation*}
The assumption that $G$ is the convex conjugate of $J$
transfers to these difference quotients.
Indeed,
a straightforward computation shows that the (scaled) dual difference quotient
$\frac12 \Delta^2_{t_k} G(\bar x, \bar \xi; \cdot) \colon X \to (-\infty, \infty]$
is the convex conjugate of the (scaled) primal difference quotient
$\frac12 \Delta^2_{t_k} J(\bar\xi, \bar x; \cdot) \colon Y \to (-\infty, \infty]$.

Now, the crucial point is that the weak-$\star$ second subderivative
$\sd^2 G(\bar x, \bar\xi; \cdot)$
is the largest function
such that
\eqref{BPfcw}
holds with
$f\conjugate = \sd^2 G(\bar x, \bar\xi; \cdot)$
and
$f_k\conjugate = \Delta^2_{t_k} G(\bar x, \bar \xi; \cdot)$
for all sequences $\seq{t_k} \subset (0,\infty)$ with $t_k \to 0$.
Or,
put differently,
the weak-$\star$ epidifferentiability of $G$ at $\bar x$ for $\bar\xi$
is related to the $\Gamma$-convergence
of the difference quotients
$\Delta^2_{t_k} G(\bar x, \bar \xi; \cdot)$
w.r.t.\ the weak-$\star$ topology.

In the light of the results of \cref{sec:epi}, it makes sense to
investigate a strong second subderivative of the preconjugate functional.
\begin{definition}
	\label{def:strong-subderivative}
	The strong second subderivative
	$\ds^2 J(\bar\xi, \bar x; y) \colon Y \to [0,\infty]$
	of $J$ at $\bar\xi$ for $\bar x$ is defined via
	\begin{equation*}
		\ds^2 J(\bar\xi, \bar x; y)
		=
		\inf\set*{
			\liminf_{k \to \infty}
			\frac{ J(\bar\xi + t_k y_k) - J(\bar\xi) - t_k \dual{\bar x}{y_k}_Y }{\frac12 t_k^2}
			\given
			t_k \searrow 0, \; y_k \to y
		}
	\end{equation*}
	for all $y \in Y$.
	Here, the infimum is taken w.r.t.\ all sequences $\seq{t_k} \subset (0,\infty)$,
	$\seq{y_k} \subset Y$ such that
	$t_k \to 0$ and $y_k \to y$.

	Further, we say that $J$ is
	strongly-strongly twice epidifferentiable
	at $\bar\xi$ for $\bar x$ if for every $y \in Y$ and
	every sequence $\seq{t_k} \subset (0,\infty)$ with $t_k \to 0$
	there exists a sequence $\seq{y_k} \subset Y$ such that
	\begin{equation*}
		\ds^2 J(\bar\xi, \bar x; y)
		=
		\lim_{k \to \infty}
		\frac{ J(\bar\xi + t_k y_k) - J(\bar\xi) - t_k \dual{\bar x}{y}_Y }{\frac12 t_k^2}
	\end{equation*}
	and $y_k \to y$.
\end{definition}
Note that the nonnegativity of $\ds^2 J(\bar\xi, \bar x; \cdot)$ follows
from the convexity of $J$ and $\bar x \in \partial J(\bar\xi)$.

Now, we are in position to apply \cref{thm:main_result_conjugate}
to the situation at hand.
\begin{theorem}
	\label{thm:main_result}
	In addition to \cref{asm:standing_application},
	we assume that $Q \in \Gamma_0(Y)$ is
	positively homogeneous of order $2$.
	Consider the following statements.
	\begin{enumerate}
		\item
			\label{thm:main_result:1}
			The functional $G$ is weak-$\star$ twice epidifferentiable at $\bar x$ for $\bar\xi$
			and
			$\frac12 \sd^2 G(\bar x, \bar\xi; \cdot) = Q\conjugate$.
		\item
			\label{thm:main_result:2b}
			For every pair $(y, h) \in \graph\partial Q$
			and every sequence $\seq{t_k} \subset (0,\infty)$ with $t_k \to 0$,
			there exist sequences $\seq{y_k} \subset Y$, $\seq{h_k} \subset X$
			with
			$y_k \to y$ in $Y$, $h_k \weaklystar h$ in $X$
			and
			$\bar\xi + t_k y_k \in \partial G(\bar x + t_k h_k)$.

		\item
			\label{thm:main_result:2}
			In addition to \ref{thm:main_result:2b},
			we have the following property:
			Given
			sequences
			$\seq{t_k} \subset (0,\infty)$,
			$\seq{y_k} \subset Y$
			and
			$\seq{h_k} \subset X$
			with
			$t_k \to 0$, $y_k \to y$ in $Y$ and $h_k \weaklystar h$ in $X$
			and
			$\bar\xi + t_k y_k \in \partial G(\bar x + t_k h_k)$ for all $k$
			implies
			$(y, h) \in \graph\partial Q$.

		\item
			\label{thm:main_result:3}
			The functional $J$ is strongly-strongly twice epidifferentiable at $\bar\xi$ for $\bar x$
			and
			$\frac12 \ds^2 J(\bar\xi, \bar x; \cdot) = Q$.
	\end{enumerate}
	Then,
	\ref{thm:main_result:1}
	$\Rightarrow$
	\ref{thm:main_result:2b}
	$\Leftrightarrow$
	\ref{thm:main_result:2}
	$\Rightarrow$
	\ref{thm:main_result:3}.
	Moreover, if
	\eqref{eq:growth_of_G_without_F}
	holds,
	then
	\ref{thm:main_result:3}
	$\Rightarrow$
	\ref{thm:main_result:1}.
\end{theorem}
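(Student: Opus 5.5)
The plan is to reduce everything to \cref{thm:main_result_conjugate}, applied along an arbitrary fixed sequence $t_k \searrow 0$. Set
\begin{equation*}
f_k := \tfrac12 \Delta^2_{t_k} J(\bar\xi,\bar x;\cdot),
\qquad
f := Q.
\end{equation*}
Then $f_k \in \Gamma_0(Y)$, $f\in\Gamma_0(Y)$ and, as noted before \cref{def:strong-subderivative}, $f_k\conjugate = \tfrac12\Delta^2_{t_k}G(\bar x,\bar\xi;\cdot)$ and $f\conjugate = Q\conjugate$. The first step is to translate each statement of \cref{thm:main_result} into the language of \cref{thm:main_result_conjugate}, quantified over all null sequences $(t_k)$.

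For the subdifferentials, a direct computation gives
\begin{equation*}
h\in\partial f_k(y) \iff \bar x + t_k h \in \partial J(\bar\xi + t_k y) \iff \bar\xi+t_k y\in\partial G(\bar x+t_k h),
\end{equation*}
where the last equivalence is Fenchel--Young with $G=J\conjugate$. Hence \ref{thm:main_result:2b} is exactly \eqref{RSpf} for every $(t_k)$, and the extra property in \ref{thm:main_result:2} is exactly \eqref{BPpf}. The normalization condition \eqref{NC} holds automatically with $(y_{0,k},x_{0,k})=(0,0)$: since $\bar x\in\partial J(\bar\xi)$ we have $(0,0)\in\graph\partial f_k$ and $f_k(0)=0$. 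Moreover $Q(0)=0$ by $2$-homogeneity and properness, and $0\in\partial Q(0)$ once we know $Q\ge 0$. I would check $Q \ge 0$ separately in each direction. In \ref{thm:main_result:3} it follows from the nonnegativity of $\ds^2 J$. Otherwise, a $2$-homogeneous convex function with $Q(0)=0$ and $Q(y)<0$ somewhere would give $Q\conjugate \equiv \infty$, contradicting properness of $Q\conjugate$. So \ref{thm:main_result:2b}$\Leftrightarrow$\ref{thm:main_result:2} follows from \cref{thm:main_result_conjugate}.

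Next I identify the $\Gamma$-limits. The statement ``$\eqref{BPfs}$ for all $(t_k)$'' means $Q\le \frac12\ds^2J$, because $\ds^2J$ is the infimum over all sequences (using \cref{lem:aux_lower_bound} for subsequences of $t_k$). Adding ``\eqref{RSfs} for all $(t_k)$'' gives equality together with strong-strong epidifferentiability; the $\liminf$ equals $\lim$ by combining the two properties. One should also check that the term $\dual{\bar x}{y}$ in \cref{def:strong-subderivative} causes no harm. Thus \ref{thm:main_result:3} is equivalent to statement (iv) of \cref{thm:main_result_conjugate} holding for every null sequence. The same argument with weak-$\star$ convergence shows that \ref{thm:main_result:1} is equivalent to statement (i) holding for every null sequence. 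The chain \ref{thm:main_result:1}$\Rightarrow$\ref{thm:main_result:2b}$\Rightarrow$\ref{thm:main_result:3} then follows, since $Y$ is separable by \cref{asm:standing_subderivatives}.

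For \ref{thm:main_result:3}$\Rightarrow$\ref{thm:main_result:1}, the remaining task is to verify \itemref{prop:weakstar_dual_recovery:2} for $f_k\conjugate$, and this is the step I expect to be the main obstacle. Substituting $x=\bar x+t_k h$ into \eqref{eq:growth_of_G_without_F} and dividing by $t_k^2$ gives
\begin{equation*}
f_k\conjugate(h) + \tfrac12\sum_{j=1}^n\dual{h}{\xi_j}_Y^2 \ge \tfrac c2\norm{h}_X^2
\qquad\text{for all } \norm{h}_X\le r/t_k .
\end{equation*}
This is exactly the required bound with $r_k := r/t_k\to\infty$, $J:=n$ and $y^{(j)}:=\xi_j$. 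The separability of $Y$ is given, so \cref{thm:main_result_conjugate} yields statement (i), and hence \ref{thm:main_result:1}. The delicate points are bookkeeping: the factor $\frac12$ between $\Delta^2$ and $f_k$ must be handled consistently, and if $r=\infty$ one takes $r_k=\infty$ or any sequence tending to infinity.
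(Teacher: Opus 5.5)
Your proposal is correct and follows essentially the same route as the paper: for every null sequence $(t_k)$ you apply \cref{thm:main_result_conjugate} with $f_k = \frac12\Delta^2_{t_k} J(\bar\xi,\bar x;\cdot)$ and $f = Q$, obtain \eqref{NC} from the pair $(0,0)$, translate $h_k \in \partial f_k(y_k)$ into $\bar\xi + t_k y_k \in \partial G(\bar x + t_k h_k)$, and derive \itemref{prop:weakstar_dual_recovery:2} from \eqref{eq:growth_of_G_without_F} with $r_k = r/t_k$. Your explicit check that $Q \ge 0$ (needed for $0 \in \partial Q(0)$, which the paper takes for granted) and your caveat that the term $\dual{\bar x}{y}_Y$ in \cref{def:strong-subderivative} should evidently read $\dual{\bar x}{y_k}_Y$ are points of extra care, not a different route.
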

\begin{proof}
	We define $f := Q$.
	We will check
	that each of the conditions of the present theorem is satisfied
	if and only if
	the corresponding condition of
	\cref{thm:main_result_conjugate}
	is satisfied with
	$f_k := \frac12 \Delta_{t_k}^2 J(\bar\xi, \bar x; \cdot)$
	for all sequences $\seq{t_k} \subset (0,\infty)$ with $t_k \to 0$.

	Indeed,
	\itemref{thm:main_result:1} can be rephrased as follows.
	For every $\seq{t_k} \subset (0,\infty)$ with $t_k \to 0$,
	the next two conditions are satisfied
	(with the above choice of $f_k$).
	\begin{itemize}
		\item
			For every $\seq{h_k} \subset X$ and $h \in X$
			with $h_k \weaklystar h$ in $X$,
			we have
			\begin{align*}
				f\conjugate(h)
				=
				Q\conjugate(h)
				\le
				\liminf_{k \to \infty}
				\braces*{
					\frac12 \Delta_{t_k}^2 G(\bar x, \bar\xi; h_k)
				}
				&=
				\liminf_{k \to \infty}
				\parens*{
					\frac12 \Delta_{t_k}^2 J(\bar\xi, \bar x; \cdot)
				}\conjugate(h_k)
				\\&
				=
				\liminf_{k \to \infty} f_k\conjugate(h_k)
				.
			\end{align*}
		\item
			For every $h \in X$, there exists $\seq{h_k} \subset X$
			with $h_k \weaklystar h$ in $X$
			such that
			\begin{align*}
				f\conjugate(h)
				=
				Q\conjugate(h)
				=
				\lim_{k \to \infty}
				\braces*{
					\frac12 \Delta_{t_k}^2 G(\bar x, \bar\xi; h_k)
				}
				&=
				\lim_{k \to \infty}
				\parens*{
					\frac12 \Delta_{t_k}^2 J(\bar\xi, \bar x; \cdot)
				}\conjugate(h_k)
				\\&
				=
				\lim_{k \to \infty} f_k\conjugate(h_k)
				.
			\end{align*}
	\end{itemize}
	Note that these conditions are precisely
	\eqref{BPfcw} and \eqref{RSfcw},
	i.e.,
	\itemref{thm:main_result_conjugate:1}.
	This shows
	\itemref{thm:main_result:1}
	is equivalent to
	\itemref{thm:main_result_conjugate:1}
	being satisfied for all null sequences $\seq{t_k}$.

	The same reasoning shows the equivalence of
	\itemref{thm:main_result:3}
	and
	\itemref{thm:main_result_conjugate:3}
	(for all null sequences $\seq{t_k}$).

	Next, we briefly argue that the normalization condition \eqref{NC}
	is trivially satisfied
	with $(y_0,x_0) = (y_{0,k}, x_{0,k}) = (0,0)$,
	since
	the positive homogeneity of $Q$ implies
	$Q(0) = 0$ and $0 \in \partial Q(0)$,
	while
	$\Delta_{t_k}^2 J(\bar\xi, \bar x; 0) = 0$
	and
	$0 \in \partial (\Delta_{t_k}^2 J(\bar\xi, \bar x; \cdot))(0)$
	follow from the definition of the difference quotient
	and $\bar x \in \partial J(\bar\xi)$.

	Next, we investigate the condition
	$(y_k, h_k) \in \graph\partial f_k$
	appearing in
	\eqref{BPpf} and \eqref{RSpf}
	mentioned in
	\itemref{thm:main_result_conjugate:2b}
	and
	\ref{thm:main_result_conjugate:2}.
	This condition
	is equivalent to
	\begin{equation*}
		h_k
		\in
		\partial \parens*{\frac12 \Delta_{t_k}^2 J(\bar\xi, \bar x; \cdot)}(y_k)
		=
		\frac{
			\partial (J(\bar\xi + t_k \cdot))(y_k) - t_k \bar x
		}{t_k^2}
		=
		\frac{
			t_k \partial J(\bar\xi + t_k y_k) - t_k \bar x
		}{t_k^2}
	\end{equation*}
	and, thus, it is equivalent to
	\begin{equation*}
		\bar x + t_k h_k
		\in
		\partial J(\bar\xi + t_k y_k)
		.
	\end{equation*}
	With this, we see that
	\itemref{thm:main_result:2b}
	is equivalent to
	\itemref{thm:main_result_conjugate:2b}
	(for all null sequences $\seq{t_k}$)
	and that
	\itemref{thm:main_result:2}
	is equivalent to
	\itemref{thm:main_result_conjugate:2}
	(for all null sequences $\seq{t_k}$).

	Thus, we can invoke
	\cref{thm:main_result_conjugate}
	to get the first chain of implications from the assertion.

	Finally, the additional assumption \eqref{eq:growth_of_G_without_F}
	implies
	\begin{align*}
		f_k\conjugate(h)
		&=
		\frac12 \Delta_{t_k}^2 G(\bar x, \bar\xi; h)
		=
		\frac{ G(\bar x + t_k h) - G(\bar x) - \dual{t_k h}{\bar\xi}_Y}{t_k^2}
		\\&
		\ge
		\frac{
			\frac{c}{2} \norm{t_k h}_X^2
			-
			\frac12 \sum_{j = 1}^n \dual{t_k h}{\xi_j}_Y^2
		}{t_k^2}
		=
			\frac{c}{2} \norm{h}_X^2
			-
			\frac12 \sum_{j = 1}^n \dual{h}{\xi_j}_Y^2
		\end{align*}
		whenever $\norm{t_k h}_X \le r$.
		That is,
		the inequality in \itemref{prop:weakstar_dual_recovery:2}
		is satisfied with $r_k = r/t_k$.
		Moreover, the convexity of $J$ and $\bar x \in \partial J(\bar\xi)$
		imply $f_k(0) = f_k\conjugate(0) = 0$ and $f_k \ge 0$.
		Hence, the inequality \eqref{eq:growth_of_G_without_F} implies that the
		additional assumption from \cref{prop:weakstar_dual_recovery} is satisfied.
		Consequently, the last assertion in \cref{thm:main_result_conjugate}
		implies
		``\ref{thm:main_result:3} $\Rightarrow$ \ref{thm:main_result:1}''.
\end{proof}

Roughly speaking,
\itemref{thm:main_result:2}
asserts
that $\partial Q$
is the graphical derivative
of $\partial J$ at $\bar\xi$ for $\bar x$
or,
equivalently,
that $\partial Q\conjugate$
is the graphical derivative
of $\partial G$ at $\bar x$ for $\bar\xi$
(if we restrict the latter subdifferentials to $Y$).

We mention that \cref{thm:main_result}
is very similar to the results
\cite[Theorems~2.5(b), 3.9]{Do1992}
in reflexive spaces.
However,
a crucial difference is that
the results of Do
use (what we call)
strong twice epidifferentiability
for $J$ and $G$.
This comes from Mosco's theorem on Mosco convergence
in reflexive spaces.
Since we replace this theorem by \cref{thm:main_result_conjugate}
in our proof,
we have to use
weak-$\star$ twice epidifferentiability for $G$
and
strong-strong twice epidifferentiability for $J$.
Similarly,
the mode of convergence of the difference quotients of the subdifferential in
\itemref{thm:main_result:2b}
and
\ref{thm:main_result:2}
is different from what is used in \cite[Theorem~3.9]{Do1992}.
We emphasize that Do's approach only applies in reflexive spaces,
since it relies on the theorems by Mosco and Attouch,
which are only available in reflexive spaces,
see, in particular, \cite{BeerBorwein1990,AttouchBeer1993}.

\Cref{thm:main_result}
has a great potential to simplify the computation of
weak-$\star$ second subderivatives considerably.
Indeed,
in the references
\cite{WachsmuthWachsmuth2021,WachsmuthWalter2024,BorchardMeyerWachsmuth2026:1},
the authors first proved that the preconjugate $J$
is strongly-strongly twice epidifferentiable.
Afterwards, additional and lengthy calculations
are carried out in order to prove that $G$ is weak-$\star$ twice epidifferentiable.

Let us have a closer look at \cite{WachsmuthWachsmuth2021}.
Therein, the strong-strong twice epidifferentiability of $J$
is shown in Section~3.1.
Afterwards, a certain approximation property of subdifferentials
of the strong second subderivative
(similar to \itemref{thm:main_result:2b})
is verified in Section~3.2.
Eventually, Section~4.2
proves the weak-$\star$ twice epidifferentiability of $G$.
Note that
the contents of these Sections~3.2 and 4.2
can be (almost) replaced by the application of \cref{thm:main_result}.
We emphasize that the arguments in \cite{WachsmuthWachsmuth2021}
also provide that $G$ is strictly twice epidifferentiable,
i.e., the recovery sequences also satisfy $\norm{h_k}_X \to \norm{h}_X$.
However, up to now we do not know any
theoretical results which make use of the strict twice epidifferentiability
of $G$.

The calculations in \cite{BorchardMeyerWachsmuth2026:1}
follow roughly the same pattern as in \cite{WachsmuthWachsmuth2021}
and, thus, can be shortened significantly.
However, the structure of the functional $G$ in \cite{WachsmuthWalter2024}
allows for a direct construction of recovery sequences.

\begin{remark}
	\label{rem:on_subgradients}
	Finally,
	we would like to mention that the condition \itemref{thm:main_result:2}
	also has an important application.
	Indeed, we mention the result
	\cite[Theorem~6.11]{WachsmuthWalter2025:1}
	which can be used to show that \eqref{eq:SSC}
	not only implies \eqref{eq:QG}
	but also the so-called strong metric subregularity
	of the set-valued map $x \mapsto F'(x) + \partial G(x)$
	as well as the
	important
	Polyak--Łojasiewicz--Kurdyka condition.
	In particular, its assumption (iii)
	requires
	that $t_k \to 0$, $h_k \weaklystar h$, $y_k \to y$
	and $\bar\xi + t_k y_k \in \partial G(\bar x + t_k h_k)$
	implies
	$\dual{h}{y}_Y \ge \sd^2 G(\bar x, \bar\xi; h)$.
	However, from \itemref{thm:main_result:2}
	we already get $y \in \partial(\frac12 \sd^2 G(\bar x, \bar\xi; \cdot))(h)$
	and,
	since the second subderivative is positively homogeneous of order $2$,
	this implies
	$\dual{h}{y}_Y = \sd^2 G(\bar x, \bar\xi; h)$.
	Consequently, the condition
	\cite[Theorem~6.11(iii)]{WachsmuthWalter2025:1}
	already follows from the weak-$\star$ twice epidifferentiability of $G$
	and
	some of the calculations in \cite[Section~7.2]{WachsmuthWalter2025:1}
	can be simply replaced by an application of
	\cref{thm:main_result}.
\end{remark}

\section{Weak-\texorpdfstring{$\star$}{*} second subderivatives in compatible spaces}
\label{sec:all_are_equal}

The aim of this section is to formalize and generalize
the results from \cite[Section~4.2]{Wachsmuth2026:1}.
That is,
we prove that the weak-$\star$ subderivative of a functional
does not change if (roughly speaking)
we switch between reasonable spaces.

Very often in applications,
the space $X$ appearing in the problem \eqref{eq:problem}
is not part of the application itself
but arises in the modeling process.
Typically, the functional $G$ from \eqref{eq:problem}
is initially defined to be real-valued on a certain set $\Gamma$,
i.e., $G \colon \Gamma \to \R$.
This set $\Gamma$ has then to be embedded into some dual space $X$
in order to apply the second-order theory from \cref{sec:SSC}.
For example, in optimal control problems, we are given the admissible set
\begin{equation*}
	\Uad := \set{ u \colon \Omega \to \R \given \text{$u$ is measurable and $u_a \le u \le u_b$ a.e.}}
\end{equation*}
for some open set $\Omega \subset \R^d$ and (for simplicity) constants $u_a < u_b$.
If we do not add any further regularization,
the functional $G$ is just the zero functional on the set $\Uad$.
Of course, $\Uad$ is embedded into $L^\infty(\Omega)$ and
we could replace $G \colon \Uad \to \R$ by
$G \colon L^\infty(\Omega) \to (-\infty,\infty]$
via
\begin{equation*}
	G(u)
	:=
	\begin{cases}
		0 & \text{if } u \in \Uad \\
		\infty & \text{else}.
	\end{cases}
\end{equation*}
However, it has been shown in
\cite{ChristofWachsmuth2017:1}
that $X = L^\infty(\Omega)$ is not a valid choice
(in particular, \eqref{eq:NDC} is violated)
and should be replaced by $X = \MM(\Omega) = C_0(\Omega)\dualspace$.
Quite recently,
it has been shown that \eqref{eq:NDC} is also satisfied
in certain dual Bessel potential spaces,
see \cite{Wachsmuth2026:1}.
Surprisingly, the weak-$\star$ second subderivative
and the validity of the weak-$\star$ second epidifferentiability
do not change when we replace $X = \MM(\Omega)$
by a dual Bessel space $X$.
In particular, we do not have to repeat (or adapt)
the lengthy computations leading to the precise form
of the weak-$\star$ second subderivative.

In order to generalize these findings,
we pose the following assumption.
\begin{assumption}
	\label{asm:standing_equivalent}
	We are given two separable Banach spaces $Y_1$ and $Y_2$
	such that the vector space operations of $Y_1$ and $Y_2$
	are identical on $Y_1 \cap Y_2$
	and such that $Y_1 \cap Y_2$ is dense both in $Y_1$ and in $Y_2$.

	We denote by $X_1 := Y_1\dualspace$ and $X_2 := Y_2\dualspace$
	the corresponding dual spaces.
	Further, we set $Y := Y_1 \cap Y_2$ and
	\begin{equation*}
		X
		:=
		\set{
			\ell \colon Y_1 \cap Y_2 \to \R
			\given
			\text{$\ell$ is continuous w.r.t.\ $\norm{\cdot}_{Y_1}$ and w.r.t.\ $\norm{\cdot}_{Y_2}$}
		}
		.
	\end{equation*}
\end{assumption}
At this point, we would like to emphasize that we do not equip $Y$ with a norm
and that $X$ is not the dual space of $Y$.
However, we nevertheless define
\begin{equation*}
	\dual{x}{y}_Y
	:=
	x(y)
	\qquad
	\forall x \in X, y \in Y
	.
\end{equation*}

With some abuse of notation,
we say that a functional $h \in X_1$ belongs to $X$
if $h$ restricted to $Y$ belongs to $X$.
Note that this is not harmful since the values of $h \colon Y_1 \to \R$ on $Y_1$
can be reconstructed from the values of its restriction on $Y = Y_1 \cap Y_2$
by density (of $Y$ in $Y_1$) and continuity (of $h$ w.r.t.\ $\norm{\cdot}_{Y_1}$).
In a similar spirit, we define $h \in X$ if $h \in X_2$ is given.
Consequently, we have
\begin{equation*}
	X = X_1 \cap X_2
\end{equation*}
under this identification.
Moreover,
\begin{equation*}
	\dual{x}{y}_{Y_i}
	=
	\dual{x}{y}_Y
\end{equation*}
whenever $i \in \set{1,2}$, $x \in X \subset X_i$ and $y \in Y \subset Y_i$ are given.

This being said,
we specify the assumptions on the functional under consideration.
\begin{assumption}
	\label{asm:standing_equivalent_ctd}
	In addition to \cref{asm:standing_equivalent},
	we are given a functional $G \colon X \to (-\infty,\infty]$.
	This functional is extended to functionals
	$G_1 \colon X_1 \to (-\infty,\infty]$ and
	$G_2 \colon X_2 \to (-\infty,\infty]$
	by extending them with $\infty$ on $X_1 \setminus X$ and $X_2 \setminus X$, respectively.
	Further, we fix $\bar x \in X$ and $\bar\xi \in Y$.
\end{assumption}

In the first lemma,
we show that if $G_i$ satisfies the condition \eqref{eq:growth_of_G_without_F}
on the space $X_i$,
the points $\xi_1, \ldots, \xi_n$ can be chosen from $Y$.
\begin{lemma}
	\label{lem:growth_of_G_Y}
	Suppose that \cref{asm:standing_equivalent_ctd} holds.
	Let $i \in \set{1,2}$ be given such that $G_i$ satisfies \eqref{eq:growth_of_G_without_F},
	i.e., there exist
	$r \in (0,\infty]$, $c > 0$ and $\xi_1, \ldots, \xi_n \in Y_i$ such that
	\begin{equation*}
		G_i(x) - G_i(\bar x) - \dual{x - \bar x}{\bar\xi}_{Y_i}
		\ge
		\frac{c}{2} \norm{x - \bar x}_{X_i}^2
		-
		\frac12 \sum_{j = 1}^n \dual{x - \bar x}{\xi_j}_{Y_i}^2
		\quad
		\forall x \in B^{X_i}_r(\bar x)
		.
	\end{equation*}
	Then, for every $\hat c \in (0,c)$ there exist
	$\hat\xi_1, \ldots, \hat\xi_n \in Y$ such that
	\begin{equation*}
		G_i(x) - G_i(\bar x) - \dual{x - \bar x}{\bar\xi}_{Y_i}
		\ge
		\frac{\hat c}{2} \norm{x - \bar x}_{X_i}^2
		-
		\frac12 \sum_{j = 1}^n \dual{x - \bar x}{\hat \xi_j}_{Y_i}^2
		\quad
		\forall x \in B^{X_i}_r(\bar x)
		.
	\end{equation*}
	In the case that $r = \infty$ or if $G$ is convex,
	we even have
	\begin{equation}
		\label{eq:strengthening_growth_of_G_without_F}
		G_i(x) - G_i(\bar x) - \dual{x - \bar x}{\bar\xi}_{Y_i}
		\ge
		\tilde c \min\set*{ \frac12 \norm{x - \bar x}_{X_i}^2, 1 }
		-
		\frac12 \sum_{j = 1}^n \dual{x - \bar x}{\hat \xi_j}_{Y_i}^2
		\quad
		\forall x \in X_i
	\end{equation}
	for some $\tilde c > 0$.
\end{lemma}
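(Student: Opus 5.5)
The plan is to replace each $\xi_j \in Y_i$ by an element $\hat\xi_j \in Y$ obtained by density of $Y$ in $Y_i$, and to absorb the resulting error into the quadratic term $\frac{c}{2}\norm{\cdot}_{X_i}^2$. Fix $\hat c \in (0,c)$ and set $\varepsilon > 0$ to be chosen later. By density, pick $\hat\xi_j \in Y$ with $\norm{\xi_j - \hat\xi_j}_{Y_i} \le \varepsilon$. For $h := x - \bar x \in X_i$, we use the elementary inequality $a^2 \le (1+\delta) b^2 + (1 + 1/\delta)(a-b)^2$ with $a = \dual{h}{\xi_j}_{Y_i}$ and $b = \dual{h}{\hat\xi_j}_{Y_i}$. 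Then
\begin{equation*}
	\frac12 \sum_{j=1}^n \dual{h}{\xi_j}_{Y_i}^2
	\le
	\frac{1+\delta}{2} \sum_{j=1}^n \dual{h}{\hat\xi_j}_{Y_i}^2
	+
	\frac{(1+1/\delta) n \varepsilon^2}{2} \norm{h}_{X_i}^2 .
\end{equation*}
Choosing first $\delta > 0$ arbitrary (say $\delta = 1$) and then $\varepsilon$ so small that $(1+1/\delta)n\varepsilon^2 \le c - \hat c$, we obtain the claimed inequality. The only difference is that the vectors are $\sqrt{1+\delta}\,\hat\xi_j$, which still lie in $Y$ since $Y$ is a vector space. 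So we rename them $\hat\xi_j$. This settles the first assertion on $B^{X_i}_r(\bar x)$.

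For the strengthening \eqref{eq:strengthening_growth_of_G_without_F}: if $r = \infty$, it is immediate with $\tilde c = \hat c$, since $\min\{\frac12 t^2, 1\} \le \frac12 t^2$. In the convex case, we extend from the ball to all of $X_i$ by a radial argument. Abbreviate $\phi(x) := G_i(x) - G_i(\bar x) - \dual{x-\bar x}{\bar\xi}_{Y_i}$. Note that $G_i$ is convex on $X_i$ when $G$ is, since extending by $\infty$ outside the subspace $X$ preserves convexity. Also $\phi(\bar x) = 0$. For $x$ with $\norm{x - \bar x}_{X_i} = s > r$, set $\lambda := r/s$ and $x_\lambda := \bar x + \lambda(x - \bar x)$, which lies on the sphere of radius $r$. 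Convexity gives $\phi(x) \ge \phi(x_\lambda)/\lambda$. Applying the ball estimate at $x_\lambda$ yields
\begin{equation*}
	\phi(x) \ge \frac{\hat c}{2} r s - \frac{\lambda}{2}\sum_j \dual{x-\bar x}{\hat\xi_j}^2 \ge \frac{\hat c}{2} r^2 - \frac12 \sum_j \dual{x-\bar x}{\hat\xi_j}^2 ,
\end{equation*}
using $\lambda \le 1$ and $s \ge r$. On the ball we already have the bound with $\frac{\hat c}{2}\norm{x-\bar x}^2 \ge \hat c\min\{\frac12\norm{x-\bar x}^2,1\}$. Outside the ball, $\frac{\hat c}{2} r^2 \ge \tilde c$ for $\tilde c := \min\{\hat c, \hat c r^2/2\}$. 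With this $\tilde c$ both regions are covered.

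The main point to get right is the first step. The constant must only degrade from $c$ to $\hat c$, and we must not need a bound on $\norm{h}$ beyond the one appearing on the right-hand side, which holds since the error is itself quadratic in $\norm{h}_{X_i}$. The scaling trick $\sqrt{1+\delta}$ keeps the new vectors in $Y$. The convex extension step is routine once one checks that $\phi$ is convex with $\phi(\bar x)=0$, so that $\phi(x_\lambda) \le \lambda\phi(x)$.
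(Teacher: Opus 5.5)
Your proof is correct and follows essentially the same route as the paper: you approximate each $\xi_j$ by an element of $Y$ using density, absorb the resulting quadratic error into the $\frac{c}{2}\norm{x-\bar x}_{X_i}^2$ term, and extend to all of $X_i$ by a radial convexity argument with $\tilde c = \hat c \min\{1, r^2/2\}$. The differences are only cosmetic. The paper bounds $\abs{a^2 - b^2} = \abs{a+b}\,\abs{a-b}$ directly, so it needs no rescaling of the $\hat\xi_j$ by $\sqrt{1+\delta}$, and it applies convexity to $\phi + \frac12\sum_j \dual{\cdot - \bar x}{\hat\xi_j}_{Y_i}^2$ rather than to $\phi$ alone.
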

\begin{proof}
	Due to the density of $Y$ in $Y_i$,
	we can choose $\hat\xi_j$ such that
	\begin{equation*}
		\norm{\hat\xi_j - \xi_j}_{Y_i}
		\le
		\delta
	\end{equation*}
	for any $\delta > 0$.
	Consequently,
	\begin{align*}
		\abs*{
			\dual{x - \bar x}{\xi_j}_{Y_i}^2
			-
			\dual{x - \bar x}{\hat\xi_j}_{Y_i}^2
		}
		&=
		\abs*{
			\dual{x - \bar x}{\xi_j + \hat\xi_j}_{Y_i}
		}
		\abs*{
			\dual{x - \bar x}{\xi_j - \hat \xi_j}_{Y_i}
		}
		\\
		&\le
		\norm{\xi_j + \hat\xi_j}_{Y_i}
		\norm{\xi_j - \hat\xi_j}_{Y_i}
		\norm{x - \bar x}_{X_i}^2
		\\
		&\le
		(\norm{\xi_j}_{Y_i} + \delta)
		\delta 
		\norm{x - \bar x}_{X_i}^2
		.
	\end{align*}
	Now the first claim follows by choosing $\delta > 0$ small enough.

	In case that $r = \infty$,
	we clearly get the final inequality with $\tilde c = \hat c$.
	On the other hand, if $G$ is convex and $r < \infty$, we have
	\begin{equation*}
		G_i(x) - G_i(\bar x) - \dual{x - \bar x}{\bar\xi}_{Y_i}
		+
		\frac12 \sum_{j = 1}^n \dual{x - \bar x}{\hat \xi_j}_{Y_i}^2
		\ge
		\frac{\hat c}{2} r^2
	\end{equation*}
	for all $x \in X_i$ with $\norm{x - \bar x}_{X_i} = r$.
	Moreover, the left-hand side of this inequality
	is a convex function w.r.t.\ $x \in X_i$
	and the value at $x = \bar x$ equals $0$.
	By convexity, this implies
	\begin{equation*}
		G_i(x) - G_i(\bar x) - \dual{x - \bar x}{\bar\xi}_{Y_i}
		+
		\frac12 \sum_{j = 1}^n \dual{x - \bar x}{\hat \xi_j}_{Y_i}^2
		\ge
		\frac{\hat c}{2} r^2
	\end{equation*}
	for all $x \in X_i$ with $\norm{x - \bar x}_{X_i} > r$.
	The final claimed inequality with
	$\tilde c = \hat c \min\set[\big]{ 1, \frac{1}{2} r^2}$ follows.
\end{proof}
If \eqref{eq:growth_of_G_without_F} is satisfied for both,
$G_1$ and $G_2$,
we can use the same sequence $\seq{\hat\xi_j} \subset Y$.
Indeed, from \cref{lem:growth_of_G_Y} we know that
for each of $G_1$ and $G_2$, we find an individual sequence from the space $Y$
and afterwards, we can simply concatenate both sequences.

As stated above,
we want to show that
$G_1$ and $G_2$ share the same weak-$\star$ second subderivative
and that
$G_1$ is weak-$\star$ twice epidifferentiable
if and only if $G_2$ has the same property.
To this end, the following lemma is the key result.
It asserts that a sequence which converges in one space
and which has a bounded second-order difference quotient
actually has to converge in the other space as well
if the strengthening \eqref{eq:strengthening_growth_of_G_without_F} of \eqref{eq:growth_of_G_without_F} holds.
\begin{lemma}
	\label{lem:limits_coincide}
	Let \cref{asm:standing_equivalent_ctd} be satisfied.
	Suppose that sequences $\seq{t_k} \subset (0,\infty)$ and $\seq{h_k} \subset X_1$
	are given such that $t_k \to 0$, $h_k \weaklystar h$ in $X_1$ for some $h \in X_1$
	and
	\begin{equation*}
		\limsup_{k \to \infty} 
		\frac{ G_1(\bar x + t_k h_k) - G_1(\bar x) - t_k \dual{h_k}{\bar\xi}_{Y_1} }{\frac12 t_k^2}
		<
		\infty
		.
	\end{equation*}
	Further, we assume that \eqref{eq:strengthening_growth_of_G_without_F}
	holds for $G_2$.
	Then, $h_k \in X$ for all $k$ large enough, $h \in X$
	and $h_k \weaklystar h$ in $X_2$.
\end{lemma}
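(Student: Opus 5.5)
The plan is to transfer everything from $X_1$ to $X_2$ through the common set $X$. There are three stages: membership of $h_k$ in $X$, a uniform $X_2$-bound from \eqref{eq:strengthening_growth_of_G_without_F}, and identification of the limit via density of $Y$.

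\emph{Step 1: membership in $X$.} Set $L$ to be the finite limit superior from the assumption. For $k$ large the difference quotient is finite, so $G_1(\bar x + t_k h_k) < \infty$. Since $G_1 = \infty$ on $X_1 \setminus X$, this gives $\bar x + t_k h_k \in X$. Because $\bar x \in X$ and $X$ is a vector space, we get $h_k \in X$.

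On $X$ we have $G_1 = G_2 = G$. Moreover, $\dual{h_k}{\bar\xi}_{Y_1} = \dual{h_k}{\bar\xi}_Y = \dual{h_k}{\bar\xi}_{Y_2}$ because $\bar\xi \in Y$. Hence the difference quotients of $G_1$ and $G_2$ along $(t_k,h_k)$ coincide for large $k$.

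\emph{Step 2: an $X_2$-bound.} Apply \eqref{eq:strengthening_growth_of_G_without_F} for $G_2$ with $x = \bar x + t_k h_k$, where the $\hat\xi_j$ lie in $Y$, and divide by $\frac12 t_k^2$. This yields
\begin{equation*}
	\frac{2\tilde c}{t_k^2}\min\set*{\tfrac12 t_k^2 \norm{h_k}_{X_2}^2, 1}
	\le
	\frac{ G(\bar x + t_k h_k) - G(\bar x) - t_k \dual{h_k}{\bar\xi}_Y }{\frac12 t_k^2}
	+ \sum_{j=1}^n \dual{h_k}{\hat\xi_j}_Y^2 .
\end{equation*}
Since $\hat\xi_j \in Y \subset Y_1$ and $h_k \weaklystar h$ in $X_1$, we have $\dual{h_k}{\hat\xi_j}_Y = \dual{h_k}{\hat\xi_j}_{Y_1} \to \dual{h}{\hat\xi_j}_{Y_1}$. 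So the right-hand side stays bounded, say by $M$, for large $k$.

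Now split according to which entry of the minimum is active. If the minimum equals $1$, the left-hand side is $2\tilde c/t_k^2 \to \infty$, which is impossible for large $k$. Thus eventually the minimum is the first entry, and $\tilde c \norm{h_k}_{X_2}^2 \le M$. Hence $\seq{h_k}$ is bounded in $X_2$. I expect this handling of the minimum to be the only delicate point, and it is resolved by $t_k \to 0$.

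\emph{Step 3: identifying the limit.} By separability of $Y_2$, every subsequence of $\seq{h_k}$ has a further subsequence converging weakly-$\star$ in $X_2$ to some $g \in X_2$. For $y \in Y$ we obtain
\begin{equation*}
	\dual{g}{y}_{Y_2} = \lim \dual{h_{k}}{y}_Y = \dual{h}{y}_{Y_1}
\end{equation*}
along that subsequence. Therefore the restriction of $h$ to $Y$ coincides with that of $g$. Consequently this restriction is continuous both w.r.t.\ $\norm{\cdot}_{Y_1}$ (as $h \in X_1$) and w.r.t.\ $\norm{\cdot}_{Y_2}$ (as $g \in X_2$). This gives $h \in X$ and $g = h$ as elements of $X_2$, using density of $Y$ in $Y_2$.

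Since the limit is independent of the subsequence, the whole sequence satisfies $h_k \weaklystar h$ in $X_2$. Alternatively, the $X_2$-bound together with convergence on the dense set $Y \subset Y_2$ already gives this directly.
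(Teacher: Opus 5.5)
Your proposal is correct and follows the paper's proof essentially step for step. You use $G_1=\infty$ off $X$ to get $h_k\in X$, then the strengthened growth inequality for $G_2$ (with $\hat\xi_j\in Y$ and $t_k\to 0$ disposing of the minimum) to bound $\seq{h_k}$ in $X_2$, and finally identify weak-$\star$ cluster points in $X_2$ with $h$ on the dense set $Y$, followed by a subsequence-subsequence argument. Your closing alternative is also valid and slightly shorter: the $X_2$-bound makes $h|_Y$ continuous w.r.t.\ $\norm{\cdot}_{Y_2}$, and pointwise convergence on the dense set $Y$ then gives weak-$\star$ convergence in $X_2$ directly, without using separability of $Y_2$.
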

\begin{proof}
	The assumption implies that $G_1(\bar x + t_k h_k) < \infty$
	for $k$ large enough.
	Consequently,
	since $G_1 = \infty$ on $X_1 \setminus X$,
	this implies
	$h_k \in X$ for $k$ large enough.
	Clearly, it is sufficient to consider the case that $h_k \in X$
	for all $k \in \N$.

	Dividing \eqref{eq:strengthening_growth_of_G_without_F} by $\frac12 t_k^2$
	yields
	\begin{equation*}
		\frac{G_2(\bar x + t_k h_k) - G_2(\bar x) - t_k \dual{h_k}{\bar\xi}_Y}{\frac12 t_k^2}
		\ge
		\tilde c \min\set*{ \norm{h_k}_{X_2}^2 , \frac{1}{\frac12 t_k^2}}
		-
		\sum_{j = 1}^n \dual{h_k}{\hat\xi_j}_Y^2
		.
	\end{equation*}
	Due to $h_k \weaklystar h$ in $X_1$ and $\hat \xi_j \in Y \subset Y_1$,
	the sum on the right-hand side converges towards a real number.
	Moreover, since we have
	$G_2(\bar x + t_k h_k) = G_1(\bar x + t_k h_k)$
	and
	$G_2(\bar x) = G_1(\bar x)$,
	the assumption implies that the left-hand side
	is bounded as $k \to \infty$.
	Thus,
	$\min\set[\big]{ \norm{h_k}_{X_2}^2 , \parens{\frac12 t_k^2}^{-1}}$
	has to be bounded as well
	and this implies
	the boundedness of $\seq{h_k}$ in $X_2$.

	Consequently, every subsequence of $\seq{h_k}$
	has a further subsequence denoted by $\seq{h_{k_n}}$
	such that $h_{k_n} \weaklystar \tilde h$ in $X_2$.
	For any $y \in Y$ we have
	\begin{equation*}
		\dual{h}{y}_{Y_1}
		=
		\lim_{n \to \infty} \dual{h_{k_n}}{y}_{Y_1}
		=
		\lim_{n \to \infty} \dual{h_{k_n}}{y}_{Y_2}
		=
		\dual{\tilde h}{y}_{Y_2}
		.
	\end{equation*}
	Consequently,
	the functionals $h$ and $\tilde h$ coincide on $Y = Y_1 \cap Y_2$
	and they are continuous on $Y$ w.r.t.\ the norm of $Y_1$ and the norm of $Y_2$.
	In particular,
	this shows $h \in X$
	and $h_{k_n} \weaklystar h$ in $X_2$.
	Since the limit $h$ is independent of the chosen subsequence,
	the usual subsequence-subsequence argument
	shows $h_k \weaklystar h$ in $X_2$
	for the entire sequence.
\end{proof}
Note that the validity of \eqref{eq:strengthening_growth_of_G_without_F}
does not change if
$G_i$ is replaced by $G$
and $x \in X_i$ is replaced by $x \in X$,
due to $G_i |_{X} = G$
and $G_i = \infty$ on $X_i \setminus X$.

By repeating the argument of
\cite[Lemma~4.6]{Wachsmuth2026:1},
we arrive at the final result of this section.
\begin{theorem}
	\label{thm:equivalent_subderivatives}
	Let \cref{asm:standing_equivalent_ctd} be satisfied.
	Further, we assume that
	there exist $\tilde c > 0$, $n \in \N$
	and $\hat\xi_j \in Y$, $j = 1,\ldots,n$ such that
	\begin{align*}
		G(x) - G(\bar x) - \dual{x - \bar x}{\bar\xi}_{Y}
		&\ge
		\tilde c \min\set*{ \frac12 \norm{x - \bar x}_{X_1}^2, 1 }
		-
		\frac12 \sum_{j = 1}^n \dual{x - \bar x}{\hat \xi_j}_{Y}^2
		\quad
		\forall x \in X
		,
		\\
		G(x) - G(\bar x) - \dual{x - \bar x}{\bar\xi}_{Y}
		&\ge
		\tilde c \min\set*{ \frac12 \norm{x - \bar x}_{X_2}^2, 1 }
		-
		\frac12 \sum_{j = 1}^n \dual{x - \bar x}{\hat \xi_j}_{Y}^2
		\quad
		\forall x \in X
		.
	\end{align*}
	Then,
	\begin{equation*}
		\sd^2 G_i (\bar x, \bar\xi; h)
		=
		\infty
		\qquad\forall h \in X_i \setminus X
	\end{equation*}
	for $i = 1, 2$
	and
	\begin{equation*}
		\sd^2 G_1(\bar x, \bar\xi; h)
		=
		\sd^2 G_2(\bar x, \bar\xi; h)
		\qquad\forall h \in X
		.
	\end{equation*}
	Moreover,
	$G_1$ is weak-$\star$ twice epidifferentiable at $\bar x$ for $\bar\xi$
	if and only if
	$G_2$ is weak-$\star$ twice epidifferentiable at $\bar x$ for $\bar\xi$.
\end{theorem}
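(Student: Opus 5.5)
The whole argument runs through \cref{lem:limits_coincide}, which applies in both directions: by the remark after that lemma, the two displayed assumptions are exactly \eqref{eq:strengthening_growth_of_G_without_F} for $G_2$ and for $G_1$. The key observation is that the difference quotients agree. For $h_k \in X$ we have $G_1(\bar x + t_k h_k) = G_2(\bar x + t_k h_k) = G(\bar x + t_k h_k)$, and $\dual{h_k}{\bar\xi}_{Y_1} = \dual{h_k}{\bar\xi}_{Y_2} = \dual{h_k}{\bar\xi}_Y$ since $\bar\xi \in Y$. For $h_k \in X_i \setminus X$ the quotient for $G_i$ equals $+\infty$. Note also that $\bar x \in X$, so $\bar x + t_k h_k \in X$ if and only if $h_k \in X$.

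The first step is the claim $\sd^2 G_i(\bar x,\bar\xi;h) = \infty$ for $h \in X_i \setminus X$; take $i = 1$. Suppose $\sd^2 G_1(\bar x,\bar\xi;h) < \infty$. Then there are $t_k \searrow 0$ and $h_k \weaklystar h$ in $X_1$ whose quotient has finite $\liminf$. Passing to a subsequence, the quotient has finite $\limsup$. Applying \cref{lem:limits_coincide} with the growth assumption for $G_2$ gives $h \in X$, which is a contradiction. The case $i = 2$ is symmetric, using the growth for $G_1$.

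The second step is the equality on $X$. Fix $h \in X$ and show $\sd^2 G_2(\bar x,\bar\xi;h) \le \sd^2 G_1(\bar x,\bar\xi;h)$; the reverse inequality follows by symmetry. Take any admissible sequences $t_k$ and $h_k \weaklystar h$ in $X_1$ whose quotient has finite $\liminf$; only these can lower the infimum below $\infty$. Pass to a subsequence realizing the $\liminf$ as a limit, which is then finite, so the $\limsup$ along the subsequence is finite. By \cref{lem:limits_coincide}, eventually $h_k \in X$ and $h_k \weaklystar h$ in $X_2$. The quotients for $G_1$ and $G_2$ coincide for these $k$. Hence this subsequence is admissible for $\sd^2 G_2(\bar x,\bar\xi;h)$ and gives the same value, and taking the infimum yields the inequality. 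The only delicate point is the subsequence: the definition of $\sd^2$ allows arbitrary $t_k \to 0$, and subsequences remain admissible, so this is fine.

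The third step is epidifferentiability. Assume $G_1$ is weak-$\star$ twice epidifferentiable, and let $t_k \to 0$ and $h \in X_2$ be given.

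If $h \in X_2 \setminus X$, then $\sd^2 G_2(\bar x,\bar\xi;h) = \infty$, and the constant sequence $h_k = h$ works, since each quotient is $+\infty$.

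If $h \in X$, take the recovery sequence $h_k \weaklystar h$ in $X_1$ for $G_1$ along $t_k$. If the common value $\sd^2 G_1(\bar x,\bar\xi;h) = \sd^2 G_2(\bar x,\bar\xi;h)$ is finite, the quotients converge to a finite limit. Their $\limsup$ is then finite, so \cref{lem:limits_coincide} gives $h_k \weaklystar h$ in $X_2$ with equal quotients, and $(h_k)$ is a recovery sequence for $G_2$. If the value is $+\infty$, the constant sequence $h_k = h$ works for $G_2$, because the $\liminf$ of the quotients along any sequence is at least the subderivative, which is $+\infty$. I expect no step to be a real obstacle; the only care needed is in handling the infinite cases and the subsequences, exactly as described.
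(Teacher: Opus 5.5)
Your proposal is correct and follows essentially the same route as the paper. Everything is reduced to \cref{lem:limits_coincide}, applied along a subsequence on which the difference quotient converges, to obtain $h \in X$ and $h_k \weaklystar h$ in $X_2$ with identical quotients; symmetry and the trivial treatment of the case $\sd^2 G_2(\bar x,\bar\xi;h)=\infty$ then finish the argument. One cosmetic point: the finitely many $h_k \notin X$ are not elements of $X_2$, so they must be replaced (e.g.\ by $0$, as the paper does) before the sequence counts as admissible for $G_2$, and this does not affect any $\liminf$ or limit.
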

\begin{proof}
	Let $h \in X_1$ be given such that
	$\sd^2 G_1 (\bar x, \bar\xi; h) < \infty$.
	By definition of the weak-$\star$ subderivative,
	for every $D > \sd^2 G_1 (\bar x, \bar\xi; h)$
	there exist sequences
	$\seq{t_k} \subset (0,\infty)$,
	$\seq{h_k} \subset X_1$
	such that $t_k \to 0$, $h_k \weaklystar h$ in $X_1$
	and
	\begin{equation*}
		\liminf_{k \to \infty}
		\frac{ G_1(\bar x + t_k h_k) - G_1(\bar x) - t_k \dual{h_k}{\bar\xi}_{Y} }{\frac12 t_k^2}
		\le
		D
		<
		\infty
		.
	\end{equation*}
	By passing to a subsequence (without relabeling),
	we can assume that this second-order difference quotient is actually convergent.
	Consequently, we can apply \cref{lem:limits_coincide}
	and this shows $h \in X$ and $h_k \weaklystar h$ in $X_2$.
	This shows the first claim for $i = 1$.
	Moreover, this also provides us with the inequality
	\begin{equation*}
		\sd^2 G_2(\bar x, \bar\xi; h)
		\le
		\liminf_{k \to \infty}
		\frac{ G_2(\bar x + t_k h_k) - G_2(\bar x) - t_k \dual{h_k}{\bar\xi}_{Y} }{\frac12 t_k^2}
		\le
		D.
	\end{equation*}
	Since $D > \sd^2 G_1(\bar x, \bar\xi; h)$ was arbitrary,
	this argument shows
	\begin{equation*}
		\sd^2 G_2(\bar x, \bar\xi; h)
		\le
		\sd^2 G_1(\bar x, \bar\xi; h)
	\end{equation*}
	for all $h \in X$ for which the right-hand side is finite
	and, consequently, for all $h \in X$.
	Symmetry settles the case $i = 2$ in the first assertion.
	Moreover, it provides us with the reverse inequality
	and this settles the second assertion.

	Now, let us assume that
	$G_1$ is weak-$\star$ twice epidifferentiable at $\bar x$ for $\bar\xi$.
	In order to show that $G_2$ satisfies the same property,
	let $h \in X_2$ be given.
	Clearly, it suffices to consider the case
	$\sd^2 G_2(\bar x, \bar\xi; h) < \infty$
	and, by the first part of the proof,
	this implies $h \in X \subset X_1$.
	For every sequence $\seq{t_k} \subset (0,\infty)$ with $t_k \to 0$,
	the definition of weak-$\star$ twice epidifferentiability
	provides us with a sequence $\seq{h_k} \subset X_1$
	such that
	\begin{equation*}
		\sd^2 G_2(\bar x, \bar\xi; h)
		=
		\sd^2 G_1(\bar x, \bar\xi; h)
		=
		\lim_{k \to \infty}
		\frac{ G_1(\bar x + t_k h_k) - G_1(\bar x) - t_k \dual{h_k}{\bar\xi}_{Y_1} }{\frac12 t_k^2}
		.
	\end{equation*}
	Due to \cref{lem:limits_coincide},
	we get $h_k \in X_2$ for $k$ large enough and $h_k \weaklystar h$ in $X_2$.
	W.l.o.g., we can assume $\seq{h_k} \subset X$
	(otherwise, we set $h_k = 0$ for all $k$ which are not large enough).
	Consequently, $\seq{h_k}$ also serves as a recovery sequence in the space $X_2$.
	This shows that
	$G_2$ is weak-$\star$ twice epidifferentiable at $\bar x$ for $\bar\xi$.
	By symmetry, this shows the final claim.
\end{proof}

This theorem provides a convenient way for the computation of
weak-$\star$ second subderivatives.
Indeed,
we can start with the computation of
the weak-$\star$ second subderivative in a nice space $X_1$.
Then, as long as the above strengthening of \eqref{eq:growth_of_G_without_F}
is satisfied in the space $X_1$ and in another space $X_2$,
\cref{thm:equivalent_subderivatives}
immediately gives us the expression for
the weak-$\star$ second subderivative in the space $X_2$.
This is precisely what has been done in \cite[Section~4.2]{Wachsmuth2026:1}.
Finally, we emphasize that the
additional conditions in \cref{thm:equivalent_subderivatives}
follow from convexity of $G$ and \eqref{eq:growth_of_G_without_F},
see \cref{lem:growth_of_G_Y}.
Moreover,
condition
\eqref{eq:growth_of_G_without_F} is needed anyway
for the application of the second-order theory
if $h \mapsto F''(\bar x) h^2$ is sequentially weak-$\star$ continuous,
see \cref{thm:SSC}.

\section*{Declaration on the use of AI tools}
All scientific content of this article was written by the author.
The AI model ``GLM 5.3 Flash''
was used for proofreading and for drafting this declaration.

\section*{Acknowledgement}
The author is grateful to Radu Ioan Boţ (Universität Wien)
for pointing out reference \cite{BartzBauschkeBorweinReichWang2007}.

The AI model ``GLM 5.3 Flash''
was accessed via
\href{https://chat-ai.academiccloud.de/}{CHAT AI}
provided by \href{https://gwdg.de}{GWDG}.

Funded by the Deutsche Forschungsgemeinschaft (DFG, German Research Foundation) via grant number 576024132.

\printbibliography

@ARTICLE{ChristofWachsmuth2017:1,
	title= {No-Gap Second-Order Conditions via a Directional Curvature Functional},
	author= {Christof, Constantin and Wachsmuth, Gerd},
	journal = {SIAM Journal on Optimization},
	volume = {28},
	number = {3},
	pages = {2097-2130},
	year = {2018},
	doi = {10.1137/17M1140418},
}

@ARTICLE{ChristofWachsmuth2017:3,
	author= {Christof, Constantin and Wachsmuth, Gerd},
	title= {Differential Sensitivity Analysis of Variational Inequalities with Locally {L}ipschitz Continuous Solution Operators},
	year= {2020},
	journal = {Applied Mathematics \& Optimization},
	doi = {10.1007/s00245-018-09553-y},
	volume = {81},
	number = {1},
	pages = {23--62},
}

@ARTICLE{BeerBorwein1990,
 title = {Mosco Convergence and Reflexivity},
 volume = {109},
 ISSN = {0002-9939},
 DOI = {10.2307/2048005},
 number = {2},
 journal = {Proceedings of the American Mathematical Society},
 publisher = {JSTOR},
 author = {Beer, Gerald and Borwein, Jonathan M.},
 year = {1990},
 pages={427}
}

@ARTICLE{Mosco1971,
 title = {On the continuity of the Young-Fenchel transform},
 volume = {35},
 ISSN = {0022-247X},
 DOI = {10.1016/0022-247x(71)90200-9},
 number = {3},
 journal = {Journal of Mathematical Analysis and Applications},
 publisher = {Elsevier BV},
 author = {Mosco, Umberto},
 year = {1971},
 pages={518–535}
}

@online{CorreaPerezArosSantander2025,
  author        = {Rafael Correa and Pedro Pérez-Aros and José Pablo Santander},
  title         = {Duality for the $\Gamma$-convergence of convex functions},
  year          = {2025},
  eprint        = {2509.21863},
  eprinttype    = {arXiv}
}

@article{WachsmuthWalter2024,
 title = {No-gap second-order conditions for minimization problems in spaces of measures},
 volume = {32},
 ISSN = {1262-3377},
 DOI = {10.1051/cocv/2026011},
 journal = {ESAIM: Control, Optimisation and Calculus of Variations},
 publisher = {EDP Sciences},
 author = {Wachsmuth, Gerd and Walter, Daniel},
 year = {2026},
 pages = {37}
}

@book{Attouch1984,
  title={Variational Convergence for Functions and Operators},
  author={Attouch, Hedy},
  isbn={9780273085836},
  series={Applicable Mathematics Series},
  year={1984},
  publisher={Pitman Advanced Pub. Program}
}

@article{WachsmuthWachsmuth2021,
  author        = {Daniel Wachsmuth and Gerd Wachsmuth},
  title         = {Second-order conditions for non-uniformly convex integrands: quadratic growth in $L^1$},
  year          = {2022},
  doi = {10.46298/jnsao-2022-8733},
  publisher = {Centre pour la Communication Scientifique Directe ({CCSD})},
  volume = {3},
  journal = {Journal of Nonsmooth Analysis and Optimization}
}

@ARTICLE{Harder2018,
	doi = {10.4171/zaa/1619},
	year = 2018,
	publisher = {European Mathematical Publishing House},
	volume = {37},
	number = {4},
	pages = {377--388},
	author = {Felix Harder},
	title = {Legendre Forms in Reflexive Banach Spaces},
	journal = {Zeitschrift für Analysis und ihre Anwendungen}
}

@PHDTHESIS{Harder2021,
  doi = {10.26127/BTUOPEN-5375},
  author = {Harder, Felix},
  title = {On bilevel optimization problems in infinite-dimensional spaces},
  school = {BTU  Cottbus-Senftenberg},
  year = {2021},
}

@ARTICLE{Rockafellar1970:2,
	author = {Rockafellar, Ralph Tyrrell},
	title = {{On the maximal monotonicity of subdifferential mappings}},
	journal = {{Pacific Journal of Mathematics}},
	issn = {0030-8730},
	volume = {33},
	pages = {209--216},
	year = {1970},
	publisher = {Mathematical Sciences Publishers (MSP), Berkeley, CA; Pacific Journal of Mathematics c/o University of California, Berkeley, CA},
	doi = {10.2140/pjm.1970.33.209}
}

@ARTICLE{Rockafellar1966,
 title = {Characterization of the subdifferentials of convex functions},
 volume = {17},
 ISSN = {0030-8730},
 DOI = {10.2140/pjm.1966.17.497},
 number = {3},
 journal = {Pacific Journal of Mathematics},
 publisher = {Mathematical Sciences Publishers},
 author = {Rockafellar, Ralph Tyrrell},
 year = {1966},
 pages={497–510}
}

@ARTICLE{BartzBauschkeBorweinReichWang2007,
 title = {Fitzpatrick functions, cyclic monotonicity and Rockafellar’s antiderivative},
 volume = {66},
 ISSN = {0362-546X},
 DOI = {10.1016/j.na.2006.01.013},
 number = {5},
 journal = {Nonlinear Analysis: Theory, Methods \& Applications},
 publisher = {Elsevier BV},
 author = {Bartz, Sedi and Bauschke, Heinz H. and Borwein, Jonathan M. and Reich, Simeon and Wang, Xianfu},
 year = {2007},
 pages={1198–1223}
}

@ARTICLE{Do1992,
	author = {Chi Ngoc Do},
	TITLE = {Generalized second-order derivatives of convex functions in reflexive {B}anach spaces},
	JOURNAL = {Transactions of the American Mathematical Society},
	VOLUME = {334},
	YEAR = {1992},
	NUMBER = {1},
	PAGES = {281--301},
	DOI = {10.2307/2153983}
}

@BOOK{HuPapageorgiou1997,
  title     = {Handbook of multivalued analysis},
  author    = {Hu, Shouchuan and Papageorgiou, Nikolas S},
  publisher = {Springer},
  series    = {Mathematics and Its Applications},
  year      =  1997,
  address   = {Dordrecht, Netherlands},
}

@ARTICLE{BrondstedRockafellar1965,
 title = {On the subdifferentiability of convex functions},
 volume = {16},
 ISSN = {1088-6826},
 DOI = {10.1090/s0002-9939-1965-0178103-8},
 number = {4},
 journal = {Proceedings of the American Mathematical Society},
 publisher = {American Mathematical Society (AMS)},
 author = {Brøndsted, Arne and Rockafellar, Ralph Tyrrell},
 year = {1965},
 pages = {605--611}
}

@ARTICLE{IvanovZlateva2008,
 title = {A new proof of the integrability of the subdifferential of a convex function on a Banach space},
 volume = {136},
 ISSN = {0002-9939},
 DOI = {10.1090/s0002-9939-08-09178-8},
 number = {5},
 journal = {Proceedings of the American Mathematical Society},
 publisher = {American Mathematical Society (AMS)},
 author = {Ivanov, Milen and Zlateva, Nadia},
 year = {2008},
 pages={1787–1793}
}

@ARTICLE{BorchardWachsmuth2023,
 title = {Second-order conditions for spatio-tem\-po\-ral\-ly sparse optimal control via second subderivatives},
 volume = {5},
 DOI = {10.46298/jnsao-2024-12604},
 journal = {Journal of Nonsmooth Analysis and Optimization},
 publisher = {Centre pour la Communication Scientifique Directe (CCSD)},
 author = {Borchard, Nicolas and Wachsmuth, Gerd},
 year = {2024}
}

@online{BorchardMeyerWachsmuth2026:1,
  author        = {Nicolas Borchard and Christian Meyer and Gerd Wachsmuth},
  title         = {No-gap second-order conditions for optimization problems involving transport distances},
  year          = {2026},
  eprint        = {2607.28264},
  eprinttype    = {arXiv}
}

@online{WachsmuthWalter2025:1,
  author        = {Gerd Wachsmuth and Daniel Walter},
  title         = {Proximal gradient methods in Banach spaces},
  year          = {2025},
  eprint        = {2509.24685},
  eprinttype    = {arXiv}
}

@online{Wachsmuth2026:1,
  author        = {Gerd Wachsmuth},
  title         = {Second-order conditions for bang-bang control of elliptic equations in arbitrary dimensions},
  year          = {2026},
  eprint        = {2602.14632},
  eprinttype    = {arXiv}
}

@ARTICLE{Rockafellar1985,
 title = {Maximal monotone relations and the second derivatives of nonsmooth functions},
 volume = {2},
 ISSN = {1873-1430},
 DOI = {10.1016/s0294-1449(16)30401-2},
 number = {3},
 journal = {Annales de l’Institut Henri Poincaré C, Analyse non linéaire},
 publisher = {European Mathematical Society - EMS - Publishing House GmbH},
 author = {Rockafellar, Ralph Tyrrell},
 year = {1985},
 pages={167–184}
}

@ARTICLE{Noll1995,
 title = {Directional differentiability of the metric projection in Hilbert space},
 volume = {170},
 ISSN = {0030-8730},
 DOI = {10.2140/pjm.1995.170.567},
 number = {2},
 journal = {Pacific Journal of Mathematics},
 publisher = {Mathematical Sciences Publishers},
 author = {Noll, Dominikus},
 year = {1995},
 pages={567–592}
}

@ARTICLE{Levy1999,
 title = {Sensitivity of Solutions to Variational Inequalities on Banach Spaces},
 volume = {38},
 ISSN = {1095-7138},
 DOI = {10.1137/s036301299833985x},
 number = {1},
 journal = {SIAM Journal on Control and Optimization},
 publisher = {Society for Industrial & Applied Mathematics (SIAM)},
 author = {Levy, Adam B.},
 year = {1999},
 pages={50–60}
}

@ARTICLE{AttouchBeer1993,
 title = {On the convergence of subdifferentials of convex functions},
 volume = {60},
 ISSN = {1420-8938},
 DOI = {10.1007/bf01207197},
 number = {4},
 journal = {Archiv der Mathematik},
 publisher = {Springer Science and Business Media LLC},
 author = {Attouch, Hedy and Beer, Gerald},
 year = {1993},
 pages={389–400}
}

@ARTICLE{CombariThibault1998,
 title = {On the graph convergence of subdifferentials of convex functions},
 volume = {126},
 ISSN = {0002-9939},
 DOI = {10.1090/s0002-9939-98-04724-8},
 number = {8},
 journal = {Proceedings of the American Mathematical Society},
 publisher = {American Mathematical Society (AMS)},
 author = {Combari, C. and Thibault, L.},
 year = {1998},
 pages={2231–2240}
}

@BOOK{BonnansShapiro2000,
	title                    = {Perturbation Analysis of Optimization Problems},
	author                   = {J. Fr{\'{e}}d{\'{e}}ric Bonnans and Alexander Shapiro},
	publisher                = {Springer},
	year                     = {2000},
	doi = {10.1007/978-1-4612-1394-9},
	address                  = {Berlin}
}

\end{document}